\newtheorem{thm}{Theorem}%[section]
\newtheorem{cor}[thm]{Corollary}
\newtheorem{prop}[thm]{Proposition}
\newtheorem{lem}[thm]{Lemma}
\theoremstyle{definition}
\newtheorem{defn}[thm]{Definition}
\newtheorem*{defn*}{Definition}
\newtheorem*{ex*}{Example}
\theoremstyle{remark}
\tikzset{
  treenode/.style = {shape=rectangle,align=center,draw,rounded corners},
  env/.style      = {treenode, font=\large},
  dummy/.style    = {rectangle, inner sep = 0.03in},
  coredummy/.style   = {rectangle},
  bdummy/.style    = {circle, inner sep = 0, outer sep = -0.5pt},
  tgraph/.style   = {circle, inner sep = 0pt,left=0.2in,draw,fill=black,outer sep = -0.5pt},
  adiag/.style = {circle, inner sep = 0pt,draw,fill=black,outer sep = -0.5pt},
  avertex/.style = {circle, inner sep = 0pt,draw,fill=black,minimum width = 3pt},
  circ/.style = {black,circle,draw,fill=white,inner sep = 1.5pt, outer sep = 0pt},
}
\tikzset{
  jumpdot/.style={mark=*,solid},
  excl/.append style={jumpdot,fill=white},
  incl/.append style={jumpdot,fill=black},
}
\newcommand{\C}{\mathbb{C}}
\newcommand{\D}{\mathcal{D}}
\renewcommand{\H}{\mathcal{H}}
\newcommand{\I}{\mathcal{I}}
\newcommand{\J}{\mathcal{J}}
\newcommand{\K}{\mathcal{K}}
\newcommand{\N}{\mathbb{N}}
\renewcommand{\P}{\mathscr{P}}
\newcommand{\Z}{\mathbb{Z}}
\newcommand{\abs}[1]{\lvert #1 \rvert}
\newcommand{\gen}[1]{\langle #1 \rangle}
\date{\today}
\author{Jordan Nikkel, Yunxiang Ren}
\title{On Jones' Subgroup of R. Thompson's Group $T$}
\pgfplotsset{compat=1.13}
\begin{document}
\maketitle

\begin{abstract}
Jones introduced unitary representations for the Thompson groups $F$ and $T$ from a given subfactor planar algebra.  Some interesting subgroups arise as the stabilizer of certain vector, in particular the Jones subgroups $\vec{F}$ and $\vec{T}$.  Golan and Sapir studied $\vec{F}$ and identified it as a copy of the Thompson group $F_3$. In this paper we completely describe $\vec{T}$ and show that $\vec{T}$ coincides with its commensurator in $T$, implying that the corresponding unitary representation is irreducible.  We also generalize the notion of the Stallings 2-core for diagram groups to $T$, showing that $\vec{T}$ and $T_3$ are not isomorphic, but as annular diagram groups they have very similar presentations. 
\end{abstract}

\tableofcontents

\section{Introduction}

The Thompson group $T$ is commonly defined as a particular subgroup of homeomorphisms of the unit circle \cite{cfp96}. Identify the unit circle with the interval $[0,1]$ modulo its boundary, and call a point on the unit circle dyadic if it is an integer multiple of an integer power of $2$. Then $T$ is the group of all homeomorphisms of the unit circle such that for each homeomorphism, there is a partition of the unit circle into finitely many intervals with dyadic endpoints, and on each interval that homeomorphism is linear with slope some integer power of $2$.  More generally, $T_n$ is the group of piecewise linear homeomorphisms of the unit circle, with each homomorphism having finitely many breakpoints at $n$-adic rational numbers and slopes integer powers of $n$ \cite{b87}.  $F$ and $F_n$ are the subgroups of $T$ and $T_n$ respectively that stabilize $0$.  These groups have been the subject of many studies, and recently that of Vaughan Jones \cite{j14,j16}.

Specifically, there is a reconstruction problem in subfactor theory: for every subfactor of finite index, can we construct a conformal field theory such that the standard invariant, or at least the quantum double of the subfactor can be recovered? After splitting the CFT into two chiral halves, one needs to construct the conformal net, which consists of von Neumann algebras $\mathcal{A}(\mathcal{I})$ on a Hilbert space $\mathcal{H}$ for every open interval $\mathcal{I}\subset S^1$, and a continuous projective unitary representation of $\text{Diff}(S^1)$ on $\mathcal{H}$ satisfying certain axioms. Motivated by this problem and thinking of the Thompson group $T$ as a discrete version of $\text{Diff}(S^1)$, Jones constructed many interesting representations for the Thompson group $F$ and $T$ \cite{j14} using the data from a given subfactor planar algebra $\mathscr{P}_\bullet=\{\mathscr{P}_{n,\pm}\}_{n\in\N}$, where each $\mathscr{P}_{n,\pm}$ is a finite-dimensional $C^*$-algebra, and a normalized vector $R\in \P_{2,+}$. For the specific representations of $F$ and $T$ corresponding to the Temperley-Lieb planar algebra $\mathcal{TL}(\sqrt{2})$ \cite{JonPA} with normalized vector $R$ as a multiple of the second Jones-Wenzl idempotent \cite{j83}, Jones defined $\vec{F}$ and $\vec{T}$ as the stabilizers of a vector in $\P_{1,+}$ called the vacuum vector. Furthermore, he proved that every link arises as the matrix coefficient with respect to the vacuum vector of some element of $F$ and $T$, and every oriented link arises as the matrix coefficient of some element of $\vec{F}$ and $\vec{T}$.

Jones showed how to test directly if an element $f$ of $F$ or $T$ is in $\vec{F}$ or $\vec{T}$ respectively by constructing the Thompson graph of $f$ and showing that $f$ is in $\vec{F}$ or $\vec{T}$ respectively if and only if $f$ has bipartite Thompson graph.  Mark Sapir and Gili Golan studied $\vec{F}$ in \cite{gs15a} using this characterization. In particular, they proved $\vec{F}$ is isomorphic to $F_3$ and showed that $\vec{F}$ was precisely the subgroup of $F$ of all elements that preserve the parity of the sums of digits of all dyadic rationals in $[0,1]$ as binary words. They also proved that $\vec{F}$ was its own commensurator in $F$, and observed that this implies that the corresponding representation of $F$ is irreducible.

In this paper we explore the subgroup $\vec{T}$, Jones' subgroup of $T$.  We present a finite set of generators for $\vec{T}$ in Theorem \ref{vecTgenerators}, describe the relationship of $\vec{T}$ to the stabilizer of the parity of the sums of digits of the dyadic rationals in Theorem \ref{dyadicparity}, and show in Corollary \ref{cor:commensurator} that $\vec{T}$ coincides with its commensurator in $T$, which implies that the corresponding unitary representation of $T$ is irreducible. We also provide an explicit finite presentation for $\vec{T}$ as an abstract group.  

In order to further investigate $\vec{T}$, we extend the notion of the Stallings 2-core developed in \cite{gs15b} and \cite{g16} to $T$. The Stallings 2-core is a construction for diagram group analogous to the Stallings foldings for free groups, where the foldings in the 2-core are two dimensional, rather than one dimensional.  Specifically, given a subgroup $H$ of a diagram group $G$, the Stallings 2-core of $H$ is a labeled directed $2$-complex that accepts certain elements of $G$ and rejects others.  The subset of elements it accepts is a subgroup that contains $H$, but is sometimes larger than $H$.  Thus the construction provides a partial solution to the problem of determining whether $H$ is equal to $G$ or not.  The 2-core construction also provides some insight into the structure of the group, which is used in \cite{gs15b} to find certain maximal subgroups of $F$, and which we use to prove that the 2-core of $\vec{T}$ is itself.  This in turn gives a presentation of $\vec{T}$ as an annular diagram group.  We also show that although $\vec{T}$ and $T_3$ have presentations as annular diagram groups that differ only by a Tietze transformation, $\vec{T}$ and $T_3$ are not isomorphic.

We first present in Section \ref{prelim} many basic preliminaries of $F$ and $T$, including the definitions using functions, pairs of trees, and annular diagram groups, as well as presenting a useful operation $\oplus$ and the normal forms for elements of $F$.  In Section \ref{vecT} we formally define $\vec{T}$ using Thompson graphs, prove that $\vec{F}$ and only one additional element generate all of $\vec{T}$, and demonstrate that $\vec{T}$ consists exactly of all functions in $T$ that either always stabilize or always switch the parity of all dyadic fractions in $[0,1)$ as binary numbers.  We also show that the commensurator of $\vec{T}$ is itself, give a finite presentation for the group, and show that any proper homomorphism of $\vec{T}$ factors through a homomorphism onto the infinite dihedral group.  In the final section, we extend the notion of the Stallings 2-core to $T$.

In this paper, we frequently refer to the generators and relations from \cite{cfp96}, but do multiplication of elements of $T$ from left to right, rather than right to left.  As a result, by choosing to use the inverses of the generators used in \cite{cfp96}, the same familiar relations hold true.

\textbf{Acknowledgements.} The authors would like to thank Mark Sapir for his support and comments on the text, as well as Gili Golan for her many helpful conversations. The authors would also like to thank Vaughan Jones for his direction in this paper.

\section{Preliminaries on \texorpdfstring{$T$}T}\label{prelim}

\subsection{Elements of \texorpdfstring{$T$}T as pairs of trees}\label{sec:Pair of trees}
\begin{figure}[ht]\centering
$ x_0(t) = \begin{cases} 2t & t \in [0,\frac14) \\
                        t + \frac14 & t \in [\frac14,\frac12) \\
                        \frac12t +\frac12 & t \in [\frac12,1)
        \end{cases}$
\hspace{0.2in}$ x_1(t) = \begin{cases}  t & t \in [0,\frac12) \\
                        2t - \frac12 & t \in [\frac12,\frac58) \\
                        t + \frac18 & t \in [\frac58, \frac34) \\
                        \frac12t+\frac12 & t \in [\frac34,1)
        \end{cases}$
$ c(t) \phantom{_1}= \begin{cases}  \frac12t + \frac12 & t \in [0,\frac12) \\
                        t + \frac14 & t \in [\frac12,\frac34) \\
                        2t - \frac32 & t \in [\frac34,1)
        \end{cases}$
\caption{The three generators of $T$.}\label{generatorsofT}
\end{figure}

$T$ is generated by functions $x_0$, $x_1$, and $c$ which are defined in Figure \ref{generatorsofT} \cite{cfp96}, and $F$ is the subgroup of $T$ generated by $x_0$ and $x_1$.
However, there is another natural definition of $T$ by representing its elements with pairs of binary trees.

A full binary tree is a rooted tree where every vertex has either $0$ or $2$ children. Given any two full binary trees $R$ and $S$ with the same number of leaves $m$ and a positive integer $n \leq m$, we associate $(R, S, n)$ with an element of $T$ in the following way. Associate the interval $[0,1)$ with the root of each tree, and inductively if the interval $[a,b)$ is associated with a vertex, associate its left child with $[a, \frac{a+b}2)$ and its right child with $[\frac{a+b}2,b)$. Since $R$ and $S$ have the same number of leaves, we can identify them pairwise by identifying the first leaf of $R$ with the $n$th leaf of $S$, and proceeding to identify the remaining leaves in a cyclical manner.  Since the intervals associated with the leaves of each tree form a partition of the unit circle, the function in $T$ corresponding to $(R, S, n)$ is the one that linearly maps the intervals of each leaf of $R$ to the intervals of their identified leaves of $S$.  If $n = 1$, then the function fixes $0$ and hence is also in $F$.

We will refer to the tree whose leaves partition the domain of the function as the input tree, and the tree whose leaves partition the image of the function as the output tree.  When they are depicted as a pair of trees with their leaves identified either directly or by labeling, we will call the diagram a tree diagram for the given element of $T$, and $(R, S, n)$ is called a pair of trees representation for the corresponding function.

It is well known that all elements of $T$, and hence all elements of $F$, arise from tree diagrams \cite{cfp96}.  For example, the tree diagrams with labeled leaves for $x_0$, $x_1$, and $c$ are depicted in Figure \ref{genT}, while a tree diagram for $x_0$ with leaves directly identified is depicted in Figure \ref{x0updown}.  The tree diagrams for elements of $T$ with leaves directly identified can be naturally drawn on an annulus, with the root of one tree on the outer edge of the annulus and the root of the other on the inner edge, which motivates the definition of $T$ and $\vec{T}$ using annular diagram groups (see Section \ref{Tannulardiagramgroup} for the definition of annular diagram groups).
\begin{figure}[ht] 
\centering
\begin{tikzpicture}
  [
    grow                    = down,
    level distance          = 0.4in,
    sibling distance        = 0.5in,
    parent anchor           = center,]
  \node [dummy] {}
    child { 
      child {node [dummy] {1}}
      child {node [dummy] {2}} 
     }
    child {node [dummy] {3}};
\end{tikzpicture}
\begin{tikzpicture}
\draw (0,0) coordinate (b)
      (0,1.2) coordinate (a);
\draw[->] (a) arc (120:90:0.2in) node[above] {$x_0$} arc(90:60:0.2in);
\end{tikzpicture}
\begin{tikzpicture}
  [
    grow                    = down,
    sibling distance        = 0.5in,
    level distance          = 0.4in,
    parent anchor           = center,]
    \node [dummy] {}
      child {node [dummy] {1}}
      child { 
        child {node [dummy] {2}}
        child {node [dummy] {3}}
      };

\end{tikzpicture}
\hspace{0.15in}
\begin{tikzpicture}
  [
    grow                    = down,
    sibling distance        = 0.5in,
    level distance          = 0.4in,
    parent anchor           = center,
    baseline                = -0.875in,
  ]
  \node [dummy] {}
      child {node [dummy] {1}}
      child {
        child {
          child {node [dummy] {2}}
          child {node [dummy] {3}}
         }
        child {node [dummy] {4}}
      };
\end{tikzpicture}
\begin{tikzpicture}
\draw (0,0) coordinate (b)
      (0,1.2) coordinate (a);
\draw[->] (a) arc (120:90:0.2in) node[above] {$x_1$} arc(90:60:0.2in);
\end{tikzpicture}
\hspace{0.1in}
\begin{tikzpicture}
  [
    grow                    = down,
    sibling distance        = 0.5in,
    level distance          = 0.4in,
    parent anchor           = center,
    baseline                = -0.875in,
  ]
    \node [dummy] {}
    child {node [dummy] {1}}
    child { 
      child {node [dummy] {2}}
      child { 
          child {node [dummy] {3}}
          child {node [dummy] {4}}
        }
      };
\end{tikzpicture}
\end{figure}

\begin{figure}[ht] 
\centering
\begin{tikzpicture}
  [
    grow                    = down,
    level distance          = 0.4in,
    sibling distance        = 0.5in,
    parent anchor           = center,]
  \node [dummy] {}
    child { node [dummy] {1}}
    child { 
      child { node [dummy] {2}}
      child { node [dummy] {3}}
      };
\end{tikzpicture}
\begin{tikzpicture}
\draw (0,0) coordinate (b)
      (0,1.2) coordinate (a);
\draw[->] (a) arc (120:90:0.2in) node[above] {$c$} arc(90:60:0.2in);
\end{tikzpicture}
\hspace{.1in}
\begin{tikzpicture}
  [
    grow                    = down,
    sibling distance        = 0.5in,
    level distance          = 0.4in,
    parent anchor           = center,]
    \node [dummy] {}
    child { node [dummy] {3}}
    child { 
      child { node [dummy] {1}}
      child { node [dummy] {2}}
    };
\end{tikzpicture}\captionsetup{width=.9\linewidth}
\caption{The standard generators of $T$, where the numbers signify which leaves are identified in each pair of trees.}\label{genT}
\end{figure}
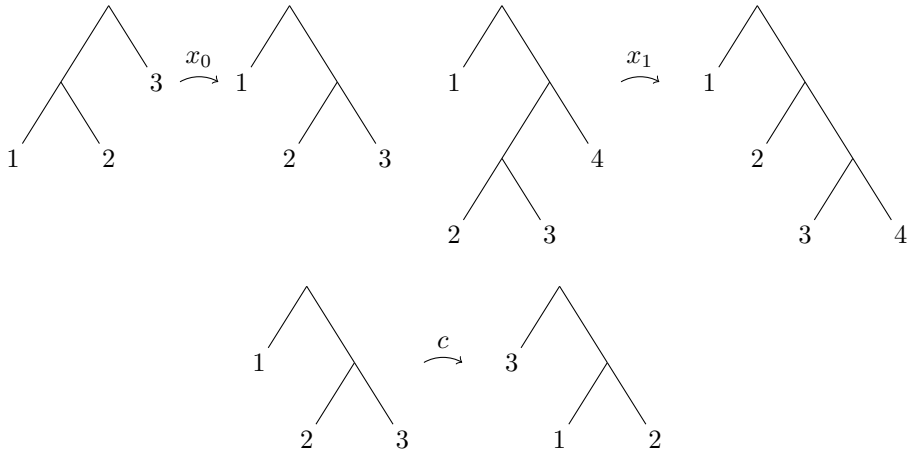

Sometimes it is convenient to not number the vertices, but simply label in the output tree with a circle which leaf is identified with the first leaf of the input tree, as is done in Figure \ref{dipole}.

By adding a caret, i.e. two children, to the same numbered leaf in both pairs of trees, and identifying the left and right children accordingly while fixing all other leaf identifications, the corresponding function is unchanged.  For example, Figure \ref{dipole} depicts $x_0$ with a caret added at the second vertex.

\begin{figure}[ht] 
\centering
\begin{minipage}{.4\textwidth}
\centering
\begin{tikzpicture}  [
    grow                    = down,
    level distance          = 0.3in,
    sibling distance        = 0.4in,
    parent anchor           = center,
    ]
    \node [adiag] (root) {}
    child { node[adiag]{}
      child {node[adiag]{}}
      child {node[adiag]{}}
     }
    child { 
      child {edge from parent [draw=none]}
      child {node[adiag]{}}
    };
    \node [adiag,below=1.2in](broot) at (root) {} [grow'=up]
    child {
      child 
      child {edge from parent [draw=none]}
    }
    child { node[adiag]{}
      child 
      child 
    };
\end{tikzpicture}\caption{Tree diagram for $x_0$ with corresonponding leaves identified directly.}\label{x0updown}
\end{minipage}
\hspace{0.05\textwidth}
\begin{minipage}{.5\textwidth}
\vspace{-0.19in}
\centering
\begin{tikzpicture}
  [
    grow                    = down,
    level distance          = 0.4in,
    sibling distance        = 0.5in,
    parent anchor           = center,]
  \node [dummy] {}
    child { 
      child {}
      child {
        child{}
        child{}
      } 
    }
    child {};
\end{tikzpicture}
\hspace{0.05in}
\begin{tikzpicture}
\draw (0,0) coordinate (b)
      (0,1.8) coordinate (a);
\draw[->] (a) arc (120:90:0.2in) node[above] {$x_0$} arc(90:60:0.2in);
\end{tikzpicture}
\hspace{0.05in}
\begin{tikzpicture}
  [
    grow                    = down,
    sibling distance        = 0.5in,
    level distance          = 0.4in,
    parent anchor           = center,]
    \node [dummy] {}
    child {node [circ] {}}
    child { 
      child {
        child{}
        child{}
      }
      child {}
    };
\end{tikzpicture}
\caption{A non-reduced representation of $x_0$, using the $\circ$ to denote which leaf of the output tree is identified with the first leaf of the input tree.}\label{dipole}
\end{minipage}
\end{figure}

Removing such pairs of carets that do not affect the corresponding function is called removing a dipole.  A tree diagram from which no dipole can be removed is called reduced, and every function has a unique reduced tree diagram \cite{cfp96}.

Since multiplication in $T$ is done via composition of functions, the same is represented with trees by doing the following: given two pairs of trees representations $(R_1, S_1, n_1)$ and $(R_2, S_2, n_2)$, add dipoles to both pairs of trees until $S_1$ and $R_2$ are the same tree $S$.  Then $(R_1, S, n_1) \cdot (S, S_2, n_2) = (R_1, S_2, n)$ where $n = n_1 + n_2 - 1$ and is taken modulo the number of leaves in $R_1$ and $S_2$.  Another way to view multiplication is to draw tree diagrams with leaves identified directly, as in Figure \ref{x0updown}, and then to identify the bottom vertex of the first diagram with the top vertex of the second.  If any vertex has children both above and below it, remove that vertex and identify its top and bottom left children, and its top and bottom right children.  This process may also be referred to as removing dipoles, and when all dipoles have been removed, the diagram is a standard tree diagram.

\subsection{\texorpdfstring{$T$}T as an annular diagram group} \label{Tannulardiagramgroup}
In \cite{gs97}, Guba and Sapir extend the notion of a diagram group to annular diagram groups.  Just as a diagram group can be defined as the fundamental group of a Squier complex, annular diagram groups can be defined as the fundamental group of a particular extension of a Squier complex.  Here we will focus on the diagrammatic definition.  

In short, let $\P = \gen{X | R}$ be a semi-group presentation, and $u$ a word over $X$, then the annular diagram group $\mathcal{D}^a(\P,u)$ is the group whose elements are finite and oriented graphs with the following properties.  The edges are labeled by elements of $X$, each diagram must have an inner vertex, which is on a distinguished inner path whose label is the same as $u$, and an outer vertex with distinguished outer path whose label is also $u$.  Finally, the inner and outer paths are tessellated by cells, which have inner and outer paths, where if an outer path of a cell is labeled by $r$ and the inner path is labeled by $s$, then $r = s$ or $s = r$ is a relation in $R$.  See Figure \ref{adiagex} for a sketch of a typical cell and annular diagram.  

If $r = s$ is the relation, the cell is called positive, and if $s = r$ is the relation, it is negative.  If the inner path of a cell $\Delta_1$ is labeled by $r$ with outer path labeled $s$, and the inner path of the cell is the outer path of another cell $\Delta_2$ labeled by $r$ with inner path labeled $s$, then this is called a dipole, and the two cells may be removed and the outer path of $\Delta_1$ is identified with the inner path of $\Delta_2$.  Any diagram without dipoles is called reduced, and multiplication in the group is done by identifying the inner path of one diagram with the outer path of another diagram and then reducing. Pictorally, this looks like put one annulus, like that in Figure \ref{adiagex}, inside another to create a third annulus.  

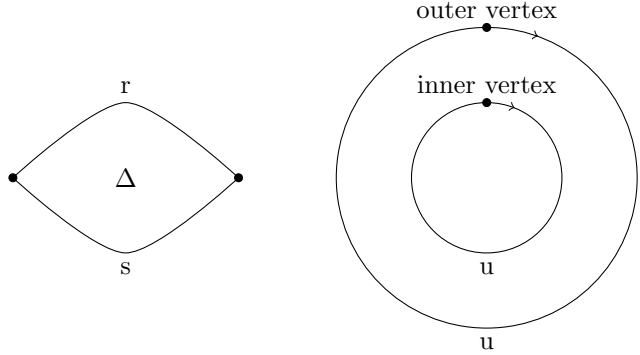
\begin{figure}[ht]\centering
\begin{tikzpicture}[baseline]
    \draw plot[smooth] coordinates{(0,0) (1.5,1) (3,0)};
    \draw plot[smooth] coordinates{(0,0) (1.5,-1) (3,0)};
    \node[above] at (1.5,1) {r};
    \node[below] at (1.5,-1) {s};
    \node at (1.5,0) {$\Delta$};
    \node[avertex] at (0,0) {};
    \node[avertex] at (3,0) {};
\end{tikzpicture}
\hspace{1cm}
\begin{tikzpicture}[baseline]
    \draw[
        decoration={markings, mark=at position 0.2 with {\arrow{<}}},
        postaction={decorate}
        ]
        (0,0) circle (2);
    \draw[
        decoration={markings, mark=at position 0.2 with {\arrow{<}}},
        postaction={decorate}
        ]
        (0,0) circle (1);
    \node[avertex] at (0,1) {};
    \node[above] at (0,1) {inner vertex};
    \node[avertex] at (0,2) {};
    \node[above] at (0,2) {outer vertex};
    \node[below] at (0,-1) {u};
    \node[below] at (0,-2) {u};
\end{tikzpicture}\captionsetup{width=.8\linewidth}
\caption{On left is a typical cell $\Delta$ in an annular diagram, where the top and bottom paths are labeled by $r$ and $s$, which are words over the $X$, the generating set, and $r = s$ is a relation in $R$.  The right picture is a sketch of an annular diagram, with the inner and outer vertices labeled, and inner and outer paths depicted with orientation and labeled by $u$, which is likewise a word over $X$.  In between these two paths would be cells like $\Delta$, which tesselate the inside of the annulus.}\label{adiagex}
\end{figure}

Guba and Sapir discovered that $T$ is the annular diagram group $\mathcal{D}^a(\gen{x | x = x^2}, x)$ \cite{gs97}.  In order to draw annular diagrams for an element of $T$ with pair of trees representation $(R,S,n)$, it is convenient to draw separately the top half, which corresponds to $R$, and the bottom half, which corresponds to $S$, and then to glue them together according to $n$. For example, Figure \ref{adiagramc} is an annular diagram for $c$, one of the generators of $T$, with the same $\circ$ notation used in tree diagrams for identifying vertices.  Since all edges are labeled by $x$, the labels are omitted.  The figure also shows the relationship between annular diagrams of elements of $T$ and pairs of trees.  Specifically, every cell, each of which has a path with one edge and a path with two edges, can be associated with a caret by putting a vertex in the middle of each edge of the cell, and connecting the vertex on the path with one edge to both of the other vertices.

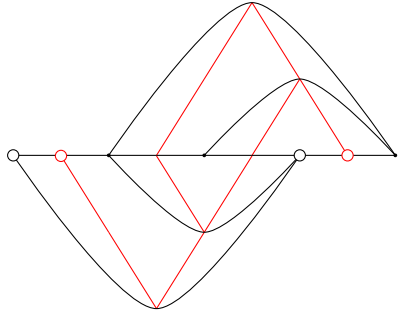
\begin{figure}[ht]
\centering

\begin{tikzpicture}  [
    grow                    = down,
    level distance          = 0.4in,
    sibling distance        = 0.5in,
    parent anchor           = center,
    ]
  \node [dummy](root) {}
    child[red] { {node [bdummy](t15){}}
      child {node [bdummy](t1){}}
      child {node [bdummy](t2){} edge from parent [draw=none]}
      }
    child[red] { {node [bdummy](t35){}}
      child { node [bdummy](t3){}}
      child { node [bdummy](t4){}}
    };
    \node[adiag, left = 0.25in](v0) at (t1) {};
    \node[adiag, right = 0.5in](v1) at (v0) {};
    \node[adiag, right = 1in](v2) at (v0) {};
    \node[adiag, right = 1.5in](v3) at (v0) {};
    \node[adiag, right = -0.5in](v00) at (v0) {};
    \draw (v00)--(v3);
    \draw plot[smooth] coordinates{(v1) (t35) (v3)};
    \draw plot[smooth] coordinates{(v0) (root) (v3)};
    \node [dummy,below=0.77in](broot) at (t1) {} [grow'=up]
        child[red] { {node [bdummy](bt15){}}
          child {node [bdummy](bt1){}}
          child {node [bdummy](bt2){} edge from parent [draw=none]}
          }
        child[red] { {node [dummy](bt35){}}
          child { node [bdummy](bt3){}}
          child { node [bdummy](bt4){}}
        };
    \draw plot[smooth] coordinates{(v00) (broot) (v2)};
    \draw plot[smooth] coordinates{(v0) (bt35) (v2)};
    \node [circ,red, fill=white] at (t4){};
    \node [circ,red, fill=white] at (bt1){};
    \node [circ] at (v00){};
    \node [circ] at (v2){};
\end{tikzpicture}\captionsetup{width=.81\linewidth}
\caption{The annular diagram for $c$, which is in black and where vertices labeled with $\circ$ are identified.  The pair of trees for $c$ is also depicted in red, where the leaves labeled with $\color{red}{\circ}$ are identified.  Here, the top most arch, colored black, is the outer path of cell, and the bottom most arch is the inner path of the cell.}\label{adiagramc}
\end{figure}

% particular, one can go between pairs of trees and annular diagram group elements in the following way.  Take an annular diagram group element, and put a vertex in the middle of every edge.  Every cell has one boundary labeled by $x$ and the other labeled by $x^2$, so within each cell connect the vertex of the path labeled $x$ to both vertices on the path labeled by $x^2$.  There is a unique longest path in the annular diagram dividing positive and negative cells, since every cell is either $x = x^2$ or $x^2 = x$, and if the interior edge $x$ of $x^2 = x$ is connected to an exterior edge $x$ of $x = x^2$, then this is a dipole which may be removed.  Thus all , and along the vertices along this diagram form the leaves for the pair of trees, one tree having its root on the outer edge of the diagram, and the other other on the inner edge.

\subsection{Addition and normal forms of elements of \texorpdfstring{$F$}F}\label{sec:normalforms}
A useful operation on $F$ is the following addition operator \cite{gs15a}.  If $f, g \in F$, then $f\oplus g$ is defined in the following way:

\[(f\oplus g)(t) = \begin{cases}
    \frac{f(2t)}2 & t \in [0,\frac12] \\
    \frac{g(2t-1)+1}2 & t \in (\frac12,1] \\
    \end{cases} \]
    
If $f = (R_1, S_1)$ and $g = (R_2, S_2)$, then similarly $f\oplus g = (R,S)$ where $R$ is a tree where the left child of the root is a copy of $R_1$ and a right copy of the root is $R_2$, and $S$ is likewise defined.  Notice that although $F$ is closed under addition, $T$ is not.  In fact, if $g \in T \setminus F$, then since $g$ does not stabilize $0$, $g \oplus f$ for any $f \in T$ will not be a continuous function.

With this notation, notice that $x_1 = 1 \oplus x_0$.  In general, another set of standard generators of $F$ are given by $\{x_n\}_{n \in \N}$ where $x_n = 1 \oplus x_{n-1}$, and $x_0$ is as already defined.  These generators are useful in that every element of $F$ be written as a product of $x_0^{a_0}x_1^{a_1}\ldots x_n^{a_n}x_n^{-b_n}\ldots x_1^{-b_1}x_0^{-b_0}$ where $n$, $a_i$, and $b_i$ are all non-negative integers \cite{cfp96}.  Then $x_0^{a_0}x_1^{a_1}\ldots x_n^{a_n}$ is called the positive part of $f$, and $x_n^{-b_n}\ldots x_1^{-b_1}x_0^{-b_0}$ is called the negative part.  If the negative part of $f$ is empty, then $f$ is said to be a positive element, and likewise if the positive part is empty, then $f$ is a negative element.

Furthermore, given a pair of trees representation of $f = (R,S)$, $n$, $a_i$, and $b_i$ can all be determined in the following way \cite{cfp96}: define the exponent of a vertex of a tree to be the maximal length of a path of left edges that end at the vertex and begins not on the right most path of the tree.  Then $n+1$ is the number of leaves in $R$ and $S$, and $a_i$ is the exponent of the $i$th vertex of $R$ numbered from $0$ to $n$, and $b_i$ is the exponent of the $i$th vertex of $S$ numbered similarly. 

This naturally leads to the following observation about the structure of reduced tree diagrams for elements of $F$.  Consider the tree $S_n$ in Figure \ref{treeSn}.  If the output tree in a tree diagram for an element of $F$ is $S_n$ for some $n$, then the exponents $b_i$ are all $0$, and hence the element is positive.  Likewise, if the input tree is $S_n$, then the exponents $a_i$ are all $0$, and hence the element is negative.

\begin{figure}[ht]
\centering
\begin{tikzpicture}
  [
    grow            = down,
    sibling distance= 0.4in,
    level distance  = 0.3in,
    parent anchor   = center,
    sloped
  ]
    \node [dummy](root) {}
    child{node [dummy] {1}}
    child{
        child{node[dummy] {2}}
        child{node [dummy] {}
            child{node[dummy] {n+1}}
            child{node[dummy] {n+2}}
            edge from parent [draw=none] node {$\ldots$} 
        }
    };
\end{tikzpicture}\caption{The tree $S_n$.}\label{treeSn}
\end{figure}
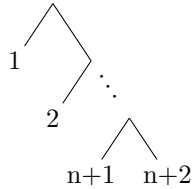

\section{Jones' subgroup \texorpdfstring{$\vec{T}$}{of T} and its properties}\label{vecT}
\subsection{Thompson graphs and \texorpdfstring{$\vec{T}$}{Jones' subgroup of T}}

Given any full binary tree with $n$ leaves, the associated Thompson graph is defined by Jones \cite{j14} in the following way.  Arrange all the leaves on a horizontal line, and call them $l_1,\ldots,l_n$.  The vertices of the Thompson graph are points $v_1,\ldots,v_n$ on the same horizontal line, with $v_1$ to the left of $l_1$, and more generally $v_i$ between $l_{i-1}$ and $l_i$.  For every left edge $e$ of the tree, there is a unique pair of vertices $v_i$ and $v_j$ such that a path can be drawn connecting $v_i$ and $v_j$ which passes through the tree only at the edge $e$ and stays above the horizontal line.  Connect every such pair of vertices. An example Thompson graph is depicted in Figure \ref{tgraphex}.

\begin{figure}[ht]
\centering
\begin{tikzpicture}
  [
    grow            = down,
    sibling distance= 0.7in,
    level distance  = 0.5in,
    parent anchor   = center,
    baseline        = 0in,
    level 1/.style  ={sibling distance=0.8in},
    level 2/.style  ={sibling distance=0.4in},
  ]
    \node [dummy](root) {}
    child {{node [dummy](l1) {}}
        child{node [dummy](1) {}}
        child{node [dummy](2) {}}
    }
    child{
        child[level distance=0.25in,sibling distance = 0.2in]{
            child{node [dummy](3) {}}
            child{node [dummy](4) {}}
        }
        child{node [dummy](5) {}}
    };
    \node[dummy,above=0.25in](tl1) at (l1) {};
    \node[dummy,above=0.25in](tl2) at (1) {};
    \node[dummy,above=0.25in,left=0.02in](tl3) at (3) {};
    \node[dummy,above=0.15in](tl4) at (4) {};
    \node[dummy,above=0.6in,right=0.02in](tl5) at (3) {};
    \node[tgraph](t1) at (1) {};
    \node[tgraph](t2) at (2) {};
    \node[tgraph](t3) at (3) {};
    \node[tgraph,left=-0.1in](t4) at (4) {};
    \node[tgraph,left=-0.1in](t5) at (5) {};
    \draw [red](t1).. controls (tl1)..(t3);
    \draw [red](t1).. controls (tl2)..(t2);
    \draw [red](t3).. controls (tl3)..(t4);
    \draw [red](t3).. controls (tl5)..(t5);
\end{tikzpicture}\captionsetup{width=.7\linewidth}
\caption{Example of a Thompson graph in red of a binary tree in black.}\label{tgraphex}
\end{figure}
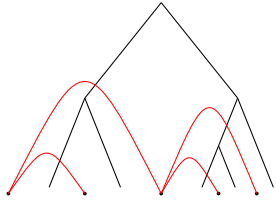

The Thompson graph associated to any element $f \in T$ with reduced pair of trees representation $(R, S, n)$ is obtained by identifying the vertices of the Thompson graphs of $R$ and $S$ in the same way that their leaves are identified. An example is depicted in Figure \ref{tgraphex2}, with the Thompson graph both depicted on the pair of trees and then simplified.

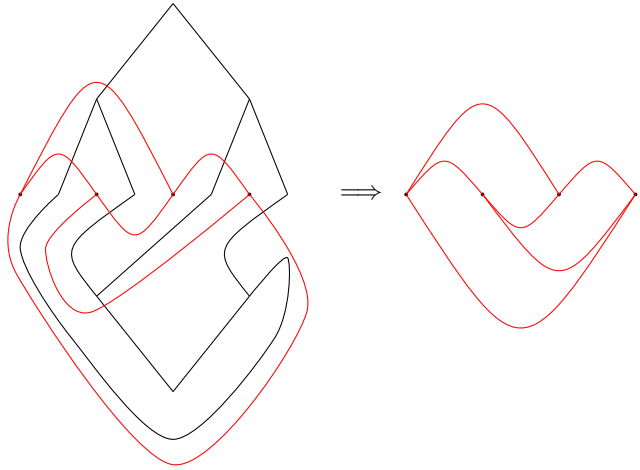
\begin{figure}[ht]
\centering
\begin{tikzpicture}
  [
    grow            = down,
    sibling distance= 0.7in,
    level distance  = 0.5in,
    parent anchor   = center,
    baseline        = 0in,
    level 1/.style  ={sibling distance=0.8in},
    level 2/.style  ={sibling distance=0.4in},
  ]
    \node [dummy](root) {}
    child {{node [dummy](l1) {}}
        child{node [bdummy](1) {}}
        child{node [bdummy](2) {}}
    }
    child{
        child{node [bdummy](3) {}}
        child{node [bdummy](4) {}}
    };
    \node[dummy,above=0.25in](tl1) at (l1) {};
    \node[dummy,above=0.25in](tl2) at (1) {};
    \node[dummy,above=0.25in](tl3) at (3) {};
    \node[tgraph](t1) at (1) {};
    \node[tgraph](t2) at (2) {};
    \node[tgraph](t3) at (3) {};
    \node[tgraph](t4) at (4) {};
    \draw [red](t1).. controls (tl1)..(t3);
    \draw [red](t1).. controls (tl2)..(t2);
    \draw [red](t3).. controls (tl3)..(t4);
    \node [dummy,below=2in] at (root) {} [grow'=up]
        child{node [bdummy](bl1) {}}
        child{node [bdummy](bl2) {}};
    \node[dummy,below=0.25in](b1) at (1) {};
    \node[dummy,below=0.25in](b2) at (2) {};
    \node[dummy,below=0.25in](bt1) at (t1) {};
    \node[dummy,below=0.25in](b3) at (3) {};
    \node[dummy,below=0.3in](b4) at (4) {};
    \node[dummy,below=2.25in](broot) at (root) {};
    \node[dummy,below=1.75in,right=0.5in](brootr) at (root) {};
    \node[dummy,below=1.75in,left=0.5in](brootl) at (root) {};
    \node[dummy,right=0.1in](c1) at (bt1) {}; 
    \node[dummy,right=0.1in,above=0.1in](c2) at (brootl) {};
    \node[dummy,left=0.02in,below=0.1in](d1) at (bt1) {};
    \node[dummy,below=0.1in](d2) at (broot) {};
    \node[dummy,right=0.1in,below=0.25in](d3) at (b4) {};
    \draw (bl1.center).. controls (b1).. (2);
    \draw (bl1.center) to (3);
    \draw (bl2.center).. controls (b3)..(4);
    \draw plot[smooth] coordinates{(bl2.center) (b4) (brootr) (broot) (brootl) (bt1) (1.south west)};
    \draw [red](t2).. controls (b2)..(t3);
    \draw [red] plot[smooth] coordinates{(t2.south west) (c1) (c2) (t4)};
    \draw [red] plot[smooth] coordinates{(t1.south west) (d1) (d2) (d3) (t4.south east)};
\end{tikzpicture}
\begin{tikzpicture}[baseline=1in]
      \node[dummy] {$\implies$};
\end{tikzpicture}
\begin{tikzpicture}[baseline=1in]
    \node[tgraph](1) {};
    \node[tgraph,right=0.6in](2) at (1){};
    \node[tgraph,right=0.6in](3) at (2){};
    \node[tgraph,right=0.6in](4) at (3){};
    \node[dummy,right=0.2in,above=0.2in](12) at (1){};
    \node[dummy,above=0.6in](13) at (2){};
    \node[dummy,right=0.2in,above=0.2in](34) at (3){};
    \node[dummy,right=0.2in,below=0.9in](b14) at (2){};
    \node[dummy,below=0.5in](b24) at (3){};
    \node[dummy,right=0.2in,below=0.2in](b23) at (2) {};
    \draw [red] (1).. controls (12).. (2);
    \draw [red] (1).. controls (13).. (3);
    \draw [red] (3).. controls (34).. (4);
    \draw [red] (1).. controls (b14).. (4);
    \draw [red] (2).. controls (b24).. (4);
    \draw [red] (2).. controls (b23).. (3);
\end{tikzpicture}\captionsetup{width=.8\linewidth}
\caption{Thompson graph in red of an element of $T$ depicted both on the pair of trees diagram and simplified.}\label{tgraphex2}
\end{figure}

The subgroup $\vec{T}$ of $T$ then is the collection of all elements of $T$ with bipartite Thompson graphs, and was first defined by Jones when he also defined $\vec{F}$ similarly.  We will use the terms bipartite and 2-colorable interchangably. For more about $\vec{F}$, see \cite{gs15a}, in which Golan and Sapir found explicit generators of $\vec{F}$ and discovered many other properties and characterizations of the subgroup.

\subsection{The generators of \texorpdfstring{$\vec{T}$}{Jones' subgroup of T}}\label{sec:genofT}
Denote by $c_n$ the element depicted in Figure \ref{cn}.

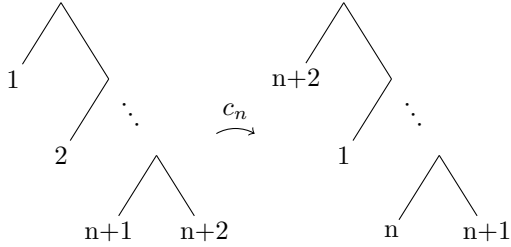
\begin{figure}[ht]
\centering
\begin{tikzpicture}
  [
    grow            = down,
    sibling distance= 0.5in,
    level distance  = 0.4in,
    parent anchor   = center,
    sloped
  ]
    \node [dummy](root) {}
    child{node [dummy] {1}}
    child{
        child{node[dummy] {2}}
        child{node [dummy] {}
            child{node[dummy] {n+1}}
            child{node[dummy] {n+2}}
            edge from parent [draw=none] node {$\ldots$} 
        }
    };
\end{tikzpicture}
\hspace{-0.2in}
\begin{tikzpicture}
\draw (0,0) coordinate (b)
      (0,1.5) coordinate (a);
\draw[->] (a) arc (120:90:0.2in) node[above] {$c_n$} arc(90:60:0.2in);
\end{tikzpicture}
%\hspace{-.1in}
\begin{tikzpicture}
  [
    grow            = down,
    sibling distance= 0.5in,
    level distance  = 0.4in,
    parent anchor   = center,
    sloped
  ]
    \node [dummy](root) {}
    child{node [dummy] {n+2}}
    child{
        child{node[dummy] {1}}
        child{node [dummy] {}
            child{node[dummy] {n}}
            child{node[dummy] {n+1}}
            edge from parent [draw=none] node {$\ldots$} 
        }
    };
\end{tikzpicture}\caption{The element $c_n$ from $T$.}\label{cn}
\end{figure}
Notice that $c = c_1$ is one of the standard generators of $T$, and $c_n$ in general is an order $n+2$ element.  Let $f_{\frac12}$ denote the function given by $f_\frac12(x) = x + \frac12 \mod 1$.  Its pair of trees diagram is depicted in Figure \ref{f12}, and $f_{\frac12}^2 = 1$.  

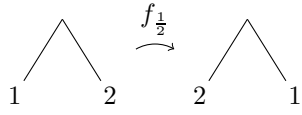
\begin{figure}[ht]
\centering
\begin{tikzpicture}
  [
    grow            = down,
    sibling distance= 0.5in,
    level distance  = 0.4in,
    parent anchor   = center,
    sloped
  ]
    \node [dummy](root) {}
    child{node [dummy] {1}}
    child{node[dummy] {2}
    };
\end{tikzpicture}
\begin{tikzpicture}
\draw (0,0) coordinate (b)
      (0,0.8) coordinate (a);
\draw[->] (a) arc (120:90:0.2in) node[above] {$f_\frac12$} arc(90:60:0.2in);
\end{tikzpicture}
\begin{tikzpicture}
  [
    grow            = down,
    sibling distance= 0.5in,
    level distance  = 0.4in,
    parent anchor   = center,
    sloped
  ]
    \node [dummy](root) {}
    child{node [dummy] {2}}
    child{node[dummy] {1}
    };
\end{tikzpicture}\caption{The element $f_{\frac12}$.}\label{f12}
\end{figure}

By Theorem 1 of \cite{gs15a}, we have $\vec{F} = \gen{x_0x_1,x_1x_2,x_2x_3}$, where $x_i$ are the generators defined in Section \ref{sec:normalforms}.  The goal of this section is the following theorem, which adds just one element to the set of generators of $\vec{F}$ to get the generators of $\vec{T}$.

\begin{thm}\label{vecTgenerators}
$\vec{T} = \gen{\vec{F},f_\frac12} = \gen{x_0x_1,x_1x_2,x_2x_3,f_\frac12}$
\end{thm}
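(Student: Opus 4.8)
The plan is to prove the two inclusions of the first equality separately; the second equality $\gen{\vec{F},f_\frac12}=\gen{x_0x_1,x_1x_2,x_2x_3,f_\frac12}$ is immediate from Golan--Sapir's $\vec{F}=\gen{x_0x_1,x_1x_2,x_2x_3}$ quoted just above. For the inclusion $\gen{\vec{F},f_\frac12}\subseteq\vec{T}$, I would first note that $\vec{F}\subseteq\vec{T}$: the reduced tree-pair diagram of an element of $F$ is unchanged when it is regarded as an element of $T$, so its Thompson graph, and hence its bipartiteness, is the same in either group. Next I would check directly that $f_\frac12$ has a bipartite Thompson graph from the two-leaf diagram in Figure \ref{f12} (each caret contributes the single edge $v_1v_2$, and the swap identification collapses the two copies to a single edge, which is trivially $2$-colorable). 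Since $\vec{T}$ is a subgroup of $T$ (being a stabilizer), these two facts give $\gen{\vec{F},f_\frac12}\subseteq\vec{T}$ at once.

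For the reverse inclusion, set $H=\gen{\vec{F},f_\frac12}$ and exploit the ``image of $0$'' map $g\mapsto g(0)$. Since $H\subseteq\vec{T}$ and $\vec{T}\cap F=\vec{F}$ by definition, we have $H\cap F=\vec{F}$, and because $\vec{T}$ is a subgroup the assignment of $\vec{F}$-cosets in $\vec{T}$ to the point $g(0)$ is well defined and injective: if $g,g'\in\vec{T}$ satisfy $g(0)=g'(0)$, then the element of $\vec{T}$ sending one to the other fixes $0$, hence lies in $\vec{T}\cap F=\vec{F}$. Consequently, to place a given $g\in\vec{T}$ inside $H$ it suffices to exhibit any $h\in H$ with $h(0)=g(0)$, for then $g$ and $h$ lie in the same $\vec{F}$-coset and $g\in\vec{F}h\subseteq H$. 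Thus the whole problem reduces to the orbit statement $\{g(0):g\in\vec{T}\}\subseteq\{h(0):h\in H\}$.

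I would then prove the stronger fact that $H$ moves $0$ to \emph{every} dyadic rational in $[0,1)$, which certainly contains $\{g(0):g\in\vec{T}\}$ and closes the argument. The set $\{h(0):h\in H\}$ is the orbit of $0$ under $H$ acting on the circle, so it is closed under $x\mapsto x+\tfrac12\pmod 1$ (coming from $f_\frac12$) and under every element of $\vec{F}$ (which fixes $0$). Applying $f_\frac12$ reaches $\tfrac12$; the crucial point is that the $\vec{F}$-orbit of $\tfrac12$ is exactly the set of dyadics in $(0,1)$ whose binary digit-sum is odd (for example $(x_0x_1)^{-1}(\tfrac12)=\tfrac14$), after which a final application of $f_\frac12$ supplies all the even-digit-sum dyadics, giving every dyadic in $[0,1)$. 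As corroborating structure I would also record that $c_n\in\vec{T}$ iff $n$ is even, since the Thompson graph of $c_n$ is an $(n+2)$-cycle, and that $c_{2k}(0)=\tfrac12=f_\frac12(0)$, so the coset criterion already forces each rotational element $c_{2k}\in\vec{F}f_\frac12\subseteq H$.

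\textbf{The main obstacle} is the transitivity lemma of the previous paragraph: that the $\vec{F}$-orbit of $\tfrac12$ fills the entire odd-digit-sum parity class. This is where the real work lies, and I would prove it by induction on the binary length of the target dyadic, using the explicit generators $x_ix_{i+1}$ of $\vec{F}$ to lower the length one digit at a time, invoking Golan--Sapir's characterization of $\vec{F}$ as precisely the stabilizer of digit-sum parity to guarantee that no finer invariant obstructs the reduction. Everything else (the two easy Thompson-graph computations and the coset bookkeeping) is routine once this lemma is in hand.
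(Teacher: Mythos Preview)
Your proposal is correct in outline but takes a genuinely different route from the paper. You argue dynamically via the coset map $g\mapsto g(0)$: since $\vec T\cap F=\vec F$, two elements of $\vec T$ with the same image of $0$ differ by an element of $\vec F$, so it suffices to show that $H=\gen{\vec F,f_{1/2}}$ already hits every point of the $\vec T$-orbit of $0$; you then reduce this to the transitivity of $\vec F$ on the odd-parity dyadics in $(0,1)$. The paper instead argues combinatorially on tree diagrams: Lemma~\ref{lemalternating} shows that, after inserting dipoles, any $f\in\vec T$ has a pair-of-trees representation $(R,S,k)$ with an \emph{alternating} $2$-coloring on each Thompson graph, from which one immediately factors $f=p\,c_{2n}^{k-1}\,q$ with $p=(R,S_{2n},1)\in\vec F_+$ and $q=(S_{2n},S,1)\in\vec F_-$. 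The proof is then finished by exactly the observation you record as ``corroborating structure'': $c_{2n}(0)=\tfrac12=f_{1/2}(0)$ forces $f_{1/2}c_{2n}^{-1}\in\vec T\cap F=\vec F$, hence $c_{2n}\in\gen{\vec F,f_{1/2}}$.

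The trade-off is clear. Your coset reduction is conceptually clean and reusable, but it pushes all the difficulty into the transitivity lemma, which you correctly flag as the main obstacle and only sketch; carrying out that induction on binary length with the generators $x_ix_{i+1}$ does work, but it is a genuine case analysis (the cases $t\in[3/8,7/8)$ require passing to $x_1x_2$, $x_2x_3$, etc.). The paper's route completely sidesteps transitivity: the alternating-coloring lemma is a one-paragraph dipole argument, and the factorization through $c_{2n}$ falls out for free because the Thompson graph of $S_{2n}$ is the path $P_{2n+2}$, which is compatible with any alternating coloring. It is worth noting that your ``side remark'' that $c_{2k}\in\vec F f_{1/2}$ \emph{is} the paper's decisive final step---had you combined it with a decomposition of the form $p\,c_{2n}^m\,q$ rather than the orbit argument, you would have recovered the paper's proof exactly.
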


Before we prove the theorem, we need the following lemma to establish the structure of the tree diagrams of elements of $\vec{T}$.

\begin{lem}\label{lemalternating}
Every element $f \in \vec{T}$ can be represented with a pair of trees ($R$, $S$, $k$) such that both trees have an even number of vertices, and such that there is a 2-coloring on the Thompson graphs of $R$ and $S$ where the colors on each vertex from left to right alternate.\end{lem}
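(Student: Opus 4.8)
The plan is to work directly with the $2$-coloring guaranteed by membership in $\vec{T}$ and to repair it into an alternating coloring by adding carets, which leaves the represented element unchanged. Since $f \in \vec{T}$, the combined Thompson graph of any representation $(R,S,k)$ is bipartite; I would fix one proper $2$-coloring $\chi$ of its vertices. The key observation is that for an element of $T$ the $n$ vertices of the Thompson graph are the $n$ gaps between consecutive leaves, arranged cyclically around the circle (the gap to the left of the first leaf being also the gap to the right of the last), and that the vertex sequences read off from $R$ and from $S$ left to right are one and the same cyclic sequence of $\chi$-colors, merely cut open at two different starting points determined by $k$. Thus it suffices to arrange that this single cyclic color sequence alternates all the way around the circle: cutting an alternating cycle at any point yields an alternating linear sequence, which gives the desired coloring on both $R$ and $S$ simultaneously.

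Next I would make precise the effect of adding a caret. Adding a caret at a leaf $l_i$ in both trees (at the identified leaf) is a dipole, so it preserves $f$; in the Thompson graph it inserts a single new vertex $w$ into the gap between the two vertices $v_i,v_{i+1}$ flanking $l_i$, joined by exactly one new edge, and that edge goes to $v_i$ alone. Hence in any proper coloring $w$ must receive the color opposite to that of $v_i$. Calling a cyclically adjacent pair $(v_i,v_{i+1})$ with $\chi(v_i)=\chi(v_{i+1})$ a \emph{defect}, I observe that inserting such a $w$ at a defect turns the local color pattern $c,c$ into $c,\bar c,c$, destroying that defect while creating no new one, whereas the already-alternating adjacent pairs are left untouched.

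The main step is then bookkeeping: add a caret at the (identified) leaf corresponding to every defect of the cyclic color sequence. After all these additions every cyclically adjacent pair of colors differs, so the sequence alternates around the entire circle. Reading it from the base point of $R$ and from the base point of $S$ produces alternating colorings of the Thompson graphs of the two enlarged trees, which is the first assertion. For the parity statement I would invoke the elementary fact that a cyclically alternating two-coloring exists only on a cycle of even length; since the number of vertices of each tree's Thompson graph equals this common cycle length, both trees end up with an even number of vertices.

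The part requiring the most care is the cyclic identification itself: one must check that the gap-to-leaf incidences are genuinely identified cyclically between $R$ and $S$, so that the two left-to-right readings really are the same cyclic word up to rotation, and that the wrap-around gap between the last and first leaves is treated correctly. This is precisely the feature of $T$ (as opposed to $F$) that both forces the even-vertex conclusion and could otherwise conceal a defect. Everything else—the edge/gap correspondence for a single added caret, and the claim that repairing defects introduces none—is a routine local verification.
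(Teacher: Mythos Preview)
Your proof is correct and takes essentially the same approach as the paper: both repair same-color adjacencies in the given $2$-coloring by inserting dipoles, observing that the newly inserted Thompson-graph vertex is adjacent only to the old vertex on its left and is therefore forced to take the opposite color. The only difference is cosmetic---you arrange the vertices cyclically so that the even-parity conclusion falls out automatically from the fact that an alternating cycle has even length, whereas the paper works along $R$'s linear sequence and then adds one extra dipole at the last leaf if the count comes out odd.
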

\begin{proof}
Let $(R,S,k)$ be the reduced pair of trees representation for $f \in \vec{T}$.  By definition of $\vec{T}$, the Thompson graph of $f$ has a valid 2-coloring, which induces a 2-coloring on the vertices of the Thompson graph of $R$.  Suppose that the Thompson graph of $f$ contains two adjacent vertices $m$ and $m+1$ with the same color, and consider inserting a dipole at vertex $m+1$, i.e., adding a caret to the $m+1$ vertices of both $R$ and $S$.  Adding a caret adds one left edge and right edge to $R$, and hence the corresponding Thompson graph of $R$ is changed by adding a vertex in the middle of the caret, and connecting that vertex through the new left edge to the existing vertex on its left. A likewise addition is made to the Thompson graph of $S$, so that the new Thompson graphs of both $R$ and $S$ are obtained by adding a vertex in between vertices $m$ and $m+1$, with an edge connecting the new vertex to $m$.  Since $m$ and $m+1$ have the same color, we may choose the other color for the new vertex, and the $2$-coloring is still valid.

Continue adding dipoles in $R$ and $S$ in this way to add vertices to the Thompson graphs of $R$ and $S$ between all two vertices with the same color, ensuring that the colors alternate as desired.  Likewise add a dipole at the last vertex of $R$ and $S$ if necessary to ensure that $R$ and $S$ has an even number of vertices.
\end{proof}

\begin{proof}[Proof of Theorem \ref{vecTgenerators}]
Let $\vec{F}_+$ denote the positive elements of $F$ that are in $\vec{F}$ and $\vec{F}_-$ denote the negative elements of $F$ that are in $\vec{F}$, where positive and negative elements are as defined in Section \ref{sec:normalforms}.  Let $S_n$ be the tree depicted in Figure \ref{treeSn}, and note that $S_n$ is both the input and output tree for $c_n$ in Figure \ref{cn}.  Then to prove the theorem we can use the previous lemma to first prove that elements of $\vec{T}$ can be written in the form $pc_{2n}^mq$, where $p \in \vec{F}_+$ and $q \in \vec{F}_-$, which is very similar to the form of elements for $T$ given in Theorem 5.7 of \cite{cfp96}.  Thus we will have proven that $\vec{T} = \gen{\vec{F}_+,\vec{F}_-,\{f_\frac12\},\{c_{2n} | n \in \N\}}$, and to finish the proof it will suffice to show that each of these generators is in $\gen{x_0x_1,x_1x_2,x_2x_3,f_\frac12}$.

Let $f \in \vec{T}$, then by Lemma \ref{lemalternating}, there exists $(R,S,k)$, a pair of trees representation of $f$, such that the Thompson graphs for $R$ and $S$ have $2n+2$ vertices and are both 2-colorable with the colors on the vertices alternating.  Consider the reduced pair of trees representation for $c_{2n}$ denoted $(S_{2n},S_{2n},2)$.  It is easy to observe that the Thompson graph corresponding to $S_{2n}$ is simply the path of length $2n+2$, and hence it is 2-colorable with the colors on the vertices alternating, just like the Thompson graphs of $R$ and $S$.

In particular, $(R,S_{2n},1)$, $(S_{2n},S_{2n},k)$, and $(S_{2n},S,1)$ are all bipartite elements of $T$ since the 2-colorings of all of the trees are compatible.  Furthermore, by the observation given in section \ref{sec:normalforms}, $(R,S_{2n},1)$ is in $F_+$ and $(S_{2n},S,1)$ is in $F_-$. We also have $(S_{2n},S_{2n},k) = c_{2n}^{k-1}$.  Therefore $f \in \gen{\vec{F}_+,\vec{F}_-,\{c_{2n} | n \in \N\}}$, which is a subgroup of $\vec{T}$ since all generating elements have bipartite Thompson graphs and are in $T$.

Finally, since $\vec{F}$ is generated by $\{x_0x_1,x_1x_2,x_2x_3\}$, it suffices to show that $c_{2n} \in \gen{\vec{F},f_{\frac12}}$.  Observe that $f_\frac12c_n^{-1} \in F$:

\begin{tikzpicture}
  [
    grow            = down,
    sibling distance= 0.5in,
    level distance  = 0.4in,
    parent anchor   = center,
    sloped
  ]
    \node [dummy](root) {}
    child{
        child{node [dummy] {1}}
        child{
            child{node[dummy] {2}}
            child{node [dummy] {}
                child{node[dummy] {n}}
                child{node[dummy] {n+1}}
                edge from parent [draw=none] node {$\ldots$} 
            }
        }
    }
    child{node[dummy]{n+2}};
\end{tikzpicture}
\hspace{-0.2in}
\begin{tikzpicture}
\draw (0,0) coordinate (b)
      (0,2) coordinate (a);
\draw[->] (a) arc (120:90:0.2in) node[above] {$f_\frac12$} arc(90:60:0.2in);
\end{tikzpicture}
\begin{tikzpicture}
  [
    grow            = down,
    sibling distance= 0.5in,
    level distance  = 0.4in,
    parent anchor   = center,
    sloped
  ]
    \node [dummy](root) {}
    child{node [dummy] {n+2}}
    child{
        child{node [dummy] {1}}
        child{
            child{node[dummy] {2}}
            child{node [dummy] {}
                child{node[dummy] {n}}
                child{node[dummy] {n+1}}
                edge from parent [draw=none] node {$\ldots$} 
            }
        }
    };
\end{tikzpicture}
\hspace{-0.2in}
\begin{tikzpicture}
\draw (0,0) coordinate (b)
      (0,2) coordinate (a);
\draw[->] (a) arc (120:90:0.2in) node[above] {$c^{-1}_n$} arc(90:60:0.2in);
\end{tikzpicture}
\begin{tikzpicture}
  [
    grow            = down,
    sibling distance= 0.5in,
    level distance  = 0.4in,
    parent anchor   = center,
    sloped
  ]
    \node [dummy](root) {}
    child{node [dummy] {1}}
    child{
        child{node [dummy] {2}}
        child{
            child{node[dummy] {3}}
            child{node [dummy] {}
                child{node[dummy] {n+1}}
                child{node[dummy] {n+2}}
                edge from parent [draw=none] node {$\ldots$} 
            }
        }
    };

\end{tikzpicture}

Therefore, since $f_\frac12$ and $c_{2n}$ are bipartite, $f = f_\frac12 c_{2n}^{-1}$ is a bipartite element of $\vec{F}$.  In particular, $c_{2n} = f_\frac12 f^{-1}$, so $c_{2n} \in \gen{\vec{F},f_\frac12}$.  
\end{proof}

\subsection{\texorpdfstring{$\vec{T}$}{Jones' subgroup of T} and dyadic parity}
Every dyadic fraction has either an even or odd sum of binary digits, and we will refer to this parity as the dyadic parity of the number.  Observe that since every function $f \in T$ is piecewise of the form $ax + b$, where $a$ is an integer power of $2$ and $b$ is dyadic, if $f$ stabilizes odd dyadic parity, it also stabilizes even dyadic parity.

Since the generators of $\vec{F}$ preserve dyadic parity \cite{gs15a}, and since $f_\frac12$ switches dyadic parity, the subgroup $\vec{T}$ that they generate has the property that every element either exclusively preserves or switches dyadic parity.  In fact, this property characterizes $\vec{T}$.

\begin{thm}\label{dyadicparity}
Let $f \in T$ either stabilize or switch dyadic parity.  Then $f \in \vec{T}$.
\end{thm}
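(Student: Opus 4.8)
The plan is to reduce the statement to Golan and Sapir's identification of $\vec{F}$ with the parity-preserving elements of $F$ \cite{gs15a}, exploiting the coset structure of $\vec{T}$ over $F$. First, since $f_\frac12$ switches dyadic parity and lies in $\vec{T}$, if $f$ switches parity then $f_\frac12 f$ preserves it; as $\vec{T}$ is a group containing $f_\frac12$, we have $f\in\vec{T}$ iff $f_\frac12 f\in\vec{T}$, so we may assume from the outset that $f$ preserves dyadic parity. Because the point $0$ has binary expansion with digit sum $0$, the image $d:=f(0)$ is then an even-parity dyadic rational.

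The heart of the argument is an orbit computation: I claim $0$ can be carried to $d$ by an element of $\vec{T}$. Granting this, choose $g\in\vec{T}$ with $g(0)=d=f(0)$, so that $fg^{-1}$ fixes $0$ and hence lies in $F$. By the forward direction recorded just before the theorem, every element of $\vec{T}$ either preserves or switches parity; since $g$ sends the even-parity point $0$ to the even-parity point $d$, it must be a preserver, and therefore $fg^{-1}$ preserves parity as well. Golan and Sapir's theorem then gives $fg^{-1}\in\vec{F}\subseteq\vec{T}$, whence $f=(fg^{-1})g\in\vec{T}$, as desired.

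It remains to produce $g$, i.e. to show the $\vec{T}$-orbit of $0$ is all of the dyadic rationals in $[0,1)$. Here I would use the even rotations $c_{2n}$, which lie in $\vec{T}$ by Theorem \ref{vecTgenerators} and satisfy $c_{2n}^{\,j}(0)=1-2^{-j}$ for $j\le 2n$; in particular $c_{2n}(0)=\tfrac12$ and $c_4^{\,2}(0)=\tfrac34$. Composing these with elements of $\vec{F}$ reduces the orbit statement to the claim that $\vec{F}$ acts transitively on each of the two parity classes of dyadic rationals in $(0,1)$: an odd-parity target is reached as $u(\tfrac12)$ with $u\in\vec{F}$ after an initial $c_{2n}$, while an even-parity target $d\ne 0$ is reached as $u(\tfrac34)$ after an initial $c_4^{\,2}$ (and $d=0$ needs $g=\mathrm{id}$).

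The main obstacle is precisely this transitivity of $\vec{F}$ on parity classes, the one ingredient not already available in the excerpt. I would establish it constructively from the generating set $\{x_0x_1,x_1x_2,x_2x_3\}$: these parity-preserving maps fix $0$ and $1$ and suffice to move any dyadic breakpoint to any other breakpoint of the same parity, which can be verified by induction on the number of binary digits of the target, peeling off one digit at a time via a suitable conjugate of $x_0x_1$. Alternatively, the transitivity can be read off directly from Golan and Sapir's explicit description of the $\vec{F}$-action in \cite{gs15a}. Once this lemma is in hand, the coset argument above completes the proof.
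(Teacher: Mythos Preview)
Your argument is logically sound and genuinely different from the paper's. The paper does not reduce to Golan--Sapir at all: it works directly with the Thompson graph of a tree pair $(R,S,k)$ for $f$, colours each Thompson-graph vertex by the dyadic parity of the binary address of the leaf immediately to its right, observes that the hypothesis on $f$ forces the two induced colourings (from $R$ and from $S$) to agree up to a global swap, and then checks by an elementary case analysis on a single left edge that adjacent Thompson-graph vertices always receive opposite colours. This is a one-page, self-contained verification that the graph is bipartite. Your route instead leverages the already-proved characterization of $\vec{F}$ together with the coset structure $\vec{T}=\vec{F}\cdot\{c_{2n}^m\}\cdot\vec{F}$, which is conceptually attractive because it makes the passage from the $F$ result to the $T$ result transparent.

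The trade-off is exactly the one you identify: everything rests on the transitivity of $\vec{F}$ on each parity class of dyadics in $(0,1)$, and this is the one place where your write-up is a sketch rather than a proof. The suggestion to ``peel off one digit at a time via a suitable conjugate of $x_0x_1$'' is not convincing as stated, since $x_0x_1$ does not act as a digit shift and the induction needs a more careful invariant. The claim is true and can be extracted from \cite{gs15a} (or proved by exhibiting, for each dyadic $d$ of the appropriate parity, an explicit element of $\vec{F}$ built from a tree pair sending the leaf at $1/2$, respectively $3/4$, to the leaf at $d$), but you should either supply that argument in full or cite a precise statement. The paper's direct colouring argument sidesteps this entirely, which is what it buys; your argument, once the lemma is nailed down, buys a cleaner structural explanation of why $\vec{T}$ is determined by $\vec{F}$ plus a single orbit datum.
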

\begin{proof}
Let $f$ have reduced pair of trees representation $(R,S,k)$.  Label left edges of $R$ and $S$ with $0$ and right edges with $1$, and label each leaf with the label of the path from the root of the tree to that leaf.  Then each vertex $v$ in the Thompson graph of $R$ is to the left of some leaf $a$ in $R$. If $a$ has even dyadic parity then color the corresponding vertex in the Thompson graph ``even", and otherwise color it ``odd". Likewise the vertices in the Thompson graph of $S$ can be colored.

If $a_1$ and $a_2$ are the labels of leaves in $R$ and $S$ respectively that are identified, then $f(a_1) = a_2$.  Therefore, if $f$ stabilizes the dyadic parity then $a_1$ and $a_2$ have the same parity. and if $f$ switches the dyadic parity, then they have opposite parity.  Thus, when the Thompson graphs of $R$ and $S$ are identified, up to swapping the colors, the colorings are identical.

It remains to show that for the Thompson graphs of $R$ and $S$, this is indeed a valid 2-coloring, i.e., that no two adjacent vertices have the same color.  Suppose two vertices $v_1$ and $v_2$ in the Thompson graph of a $R$ are adjacent, and the leaves to their right are labeled by $a_1$ and $a_2$.  Then either one of them is directly beneath a caret, or neither is.  Both situations are depicted in Figure \ref{tgraphadjacentvertices}, and when one is directly beneath a caret, it is easily verified that $a_1$ and $a_2$ differ only by the last digit, and hence $v_1$ and $v_2$ have opposite colorings.  In second case, the edge between $v_1$ and $v_2$ crosses a left edge $e$ in $R$, which is labeled $0$, and let $k$ be the length of the path from the root to $e$, including $e$. Now the vertex in $R$ labeled by $a_1$ is to the left of this edge in $R$, hence the $k$th digit of $a_1$ is $0$, while the $k$th digit of $a_2$ is labeled $1$.  Furthermore, until the $k$th digits, $a_1$ and $a_2$ have the same digits.  Finally, note that after the digits past the $k$th digits of $a_1$ and $a_2$ are all $0$s, as that is the only way for these two vertices to be on either side of $e$.  Indeed, if the path to either vertex ever went right, then the edge between the vertices in the Thompson graph would have to cross more than just the edge $e$ of $R$. Thus the dyadic parity of $a_1$ and $a_2$ are exactly opposite, and the $2$-coloring is valid.
\end{proof}

\begin{figure}[ht]\centering
\begin{tikzpicture}
  [
    grow            = down,
    sibling distance= 0.5in,
    level distance  = 0.4in,
    parent anchor   = center,
    level 2/.style  ={level distance = 0.15in},
    level 3/.style  ={sibling distance=1.2in,level distance = 0.4in},
    level 4/.style  ={sibling distance=0.4in},
  ]
    \node [dummy](root) {}
    child{edge from parent [draw=none] node[sloped] {$\ldots$} child {edge from parent [draw=none]
        child{node[dummy](e1){}
            child{
                child{node[dummy](a1){$a_1$} edge from parent node[left](a13){$0$}}
                child{node[dummy](a3){$a_3$} edge from parent node[right]{$1$}}
                edge from parent [draw=none] node[sloped] {$\ldots$}
            }
            child{edge from parent[draw=none]}
            edge from parent node[left,above]{$0$} node[right]{$e$} 
        }
        child{
            child{
                child{node[dummy](a2){$a_2$} edge from parent node[left]{$0$}}
                child{edge from parent [draw=none]}
                edge from parent [draw=none] node[sloped] {$\ldots$}
            }
            child{edge from parent[draw=none]}
            edge from parent node[right,above]{$1$}
        }}
    };
    \node[tgraph,red](v1) at (a1) {};
    \node[tgraph,red](v2) at (a2) {};
    \node[tgraph,red](v3) at (a3) {};
    \node[dummy,below,red] at (v1) {$v_1$};
    \node[dummy,below,red] at (v2) {$v_2$};
    \node[dummy,below,red] at (v3) {$v_3$};
    \node[dummy,above=0.5in](e) at (e1) {};
    \draw [red] (v1).. controls (a13).. (v3);
    \draw [red] (v1).. controls (e).. (v2);
\end{tikzpicture}
\captionsetup{width=.84\linewidth}
\caption{The two possible types of adjacencies in a Thompson graph, between $v_1$ and $v_2$ and $v_1$ and $v_3$.  The leaves labeled $a_1$ and $a_2$ have different dyadic parity, as do the leaves labeled $a_1$ and $a_3$.}\label{tgraphadjacentvertices}
\end{figure}
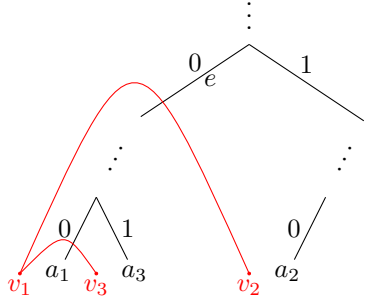

\subsection{\texorpdfstring{$\vec{T}$}{Jones' subgroup of T} coincides with its commensurator in \texorpdfstring{$T$}{T}}
The following Lemma formalizes the statement that for any element $f \in T$, for $t$ small, $f$ either preservers the dyadic parity of all such $t$, or switches the dyadic parity.  The proof simply relies on the fact that elements of $T$ are piecewise linear with slopes equal to an integer power of $2$.

\begin{lem}
Let $f \in T$, and let $S_i$ denote the collection of dyadic rational numbers with sum $i \mod 2$.  Then there exists $m$ such that for all $t \in S_i$ with $t\leq 2^{-m}$ and $i = 0,1$, either $f(t)$ is always in $S_i$ or it is always in $S_{1-i}$.
\end{lem}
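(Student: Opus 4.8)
The plan is to exploit the piecewise-linear structure of $f$ near $0$. Since $f \in T$, there is a partition of the circle into finitely many intervals with dyadic endpoints on each of which $f$ is affine with slope an integer power of $2$. Let $\delta > 0$ be a dyadic number smaller than the first breakpoint of $f$ in $(0,1)$, so that for $t \in (0,\delta)$ we have $f(t) = 2^a t + b$ for a fixed integer $a$ and a fixed constant $b$. First I would check that $b$ is itself dyadic: since $f$ carries dyadic points to dyadic points and both endpoints of this affine piece and their images are dyadic, $b = f(\delta) - 2^a\delta$ is dyadic. Write $p$ for the position of the last nonzero binary digit of $b$ (with the convention $p = 0$ when $b = 0$).

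Second, I would record two elementary facts about dyadic parity, both immediate from its definition as the sum of binary digits modulo $2$, working throughout with the finite (terminating) binary expansion so that the digit sum is well defined. \emph{(i)} Multiplication by $2^a$ merely shifts the binary expansion and hence preserves the digit sum; in particular, once $t$ is small enough that $2^a t \in (0,1)$, the dyadic parity of $2^a t$ equals that of $t$. \emph{(ii)} If two dyadic numbers have disjoint sets of nonzero digit positions, then their sum produces no carries, so the digit sum — and hence the parity — is additive.

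Third, I would choose $m$ large: take $m$ with $2^{-m} < \delta$ (so the affine formula applies) and $2^{a}2^{-m} < 2^{-p}$, i.e.\ $m > p + a$ (so that $2^a t < 2^{-p}$ for all $t \le 2^{-m}$). For such $t$ the nonzero digits of $2^a t$ all lie strictly to the right of position $p$, hence past every nonzero digit of $b$. Applying \emph{(i)} and then \emph{(ii)} gives
\[
\mathrm{parity}(f(t)) = \mathrm{parity}(2^a t) + \mathrm{parity}(b) = \mathrm{parity}(t) + \mathrm{parity}(b) \pmod 2,
\]
uniformly in $t \le 2^{-m}$. Consequently the comparison between the parities of $t$ and of $f(t)$ depends only on the fixed quantity $\mathrm{parity}(b)$: if $b$ has even digit sum then $f$ preserves dyadic parity on this whole range (so $f(t) \in S_i$ whenever $t \in S_i$), and if $b$ has odd digit sum then $f$ switches it ($f(t) \in S_{1-i}$), which is exactly the claim.

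The only subtle point — and the step I would be most careful about — is the bookkeeping of carries: one must fix the finite binary representation of every dyadic rational so that ``digit sum'' is unambiguous, and one must verify that the chosen threshold $2^{-m}$ really forces the nonzero digits of $2^a t$ strictly past those of $b$, so that the addition $2^a t + b$ genuinely involves no carry into the block occupied by $b$. Everything else is routine once the affine normal form $f(t) = 2^a t + b$ near $0$ is in hand.
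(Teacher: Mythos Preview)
Your argument is correct and follows essentially the same route as the paper's proof: pass to the affine piece $f(t)=2^{a}t+b$ near $0$, note that the shift by $2^{a}$ preserves digit-sum parity, and choose $m$ large enough that the nonzero digits of $2^{a}t$ lie strictly beyond those of $b$ so that the addition involves no carries and the parity of $f(t)$ differs from that of $t$ by the fixed parity of $b$. The paper phrases the no-carry step as ``$2^{n}t$ has the form $0^{l}\beta$ so $f(t)=\alpha\beta$ as a binary string,'' which is exactly your observation (ii); your write-up is in fact a bit more explicit about why $b$ is dyadic and about the threshold $m>p+a$, but there is no substantive difference in method.
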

\begin{proof}
First, there exists $m$ such that on $[0,2^{-m}]$, $f$ is given by $f(t) = 2^nt + \alpha$ for some $n \in \Z$ and $\alpha$ a dyadic rational.  Let $t \in S_i$.  Consider the length of $\alpha$ to be $l$, then $m$ be chosen to be large enough that for $t \leq 2^{-m}$, $t\cdot2^n$ can be written as a binary decimal of the form $0^l\beta$ for some finite binary string $\beta$ depending on $t$, where $\beta \in S_i$, the same as $t$.  Then $f(t) = 2^nt + \alpha$ as a binary string is simply $\alpha\beta$, hence if $\alpha \in S_0$, then $f(t)$ does not change the dyadic parity on $[0,2^{-m}]$, and if $\alpha \in S_1$, then $f(t)$ does switch dyadic parity on $[0,2^{-m}]$.
\end{proof}

With this technical fact, we can now prove that $\vec{T}$ coincides with its commensurator in $T$ as a corollary of Theorem \ref{dyadicparity}, where the commensurator of $\vec{T}$ is defined as $\{t \in T \:|\: t\vec{T}t^{-1}\cap \vec{T} \text{ has finite index in both } \vec{T} \text{ and } t\vec{T}t^{-1}\}$.

\begin{cor}\label{cor:commensurator}The commensurator of $\vec{T}$ in $T$ is $\vec{T}$.
\end{cor}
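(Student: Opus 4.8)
The plan is to prove the nontrivial inclusion $\mathrm{Comm}_T(\vec T)\subseteq \vec T$; the reverse inclusion is immediate since $t\vec T t^{-1}=\vec T$ for $t\in\vec T$. So I fix $t$ in the commensurator and suppose, for contradiction, that $t\notin\vec T$. Writing $K=\vec T\cap t^{-1}\vec T t$, the commensurability hypothesis (applied to $t^{-1}$, which also lies in the commensurator) gives that $K$ has finite index in $\vec T$, and by construction $g\in K$ exactly when $tgt^{-1}\in\vec T$. First I would upgrade the preceding lemma from a statement near $0$ to a statement near every dyadic point: since $t$ is piecewise linear with slopes powers of $2$ and dyadic breakpoints, the same binary-string computation shows that on a sufficiently short standard dyadic interval to the right of any dyadic rational, $t$ either preserves the dyadic parity of every such point or switches it. Let $U$ be the set of dyadic rationals at which $t$ is locally parity-preserving and $V$ the set at which it is locally parity-switching; these are complementary unions of standard dyadic intervals. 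Because $t\notin\vec T$, Theorem~\ref{dyadicparity} shows $t$ is neither a global preserver nor a global switcher, so both $U$ and $V$ are nonempty, and hence there is a standard dyadic interval $I$ whose left half $A$ lies in $U$ and whose right half $B$ lies in $V$; let $c$ be the common (dyadic) midpoint.

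Next I would work inside the copy $G$ of $\vec F$ consisting of the parity-preserving elements of $T$ supported on $I$ (the image of $\vec F$ under rescaling to $I$ and extension by the identity). Each such element preserves global dyadic parity, so $G\le\vec T$, and by Theorem~1 of \cite{gs15a} we have $G\cong\vec F\cong F_3$; in particular $G$ is infinite and acts on the dyadic rationals of $I$ with an infinite orbit through $c$. The heart of the argument is a short parity computation: if $g\in G$ sends some dyadic $x\in A$ to a point $g(x)\in B$, then for $y=t(x)$ one has $tgt^{-1}(y)=t(g(x))$, and comparing parities gives that $tgt^{-1}$ switches the parity of $y$ (because $x\in U$ and $g(x)\in V$ while $g$ preserves parity), whereas $tgt^{-1}$ fixes the dyadic image under $t$ of any point fixed by $g$ outside $I$ and so does not switch parity there. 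Thus $tgt^{-1}$ is parity-inconsistent, so $tgt^{-1}\notin\vec T$ by Theorem~\ref{dyadicparity}, and therefore $g\notin K$. Applying the same conclusion to $g^{-1}\in K$ shows that every $g\in K\cap G$ sends no dyadic of $A$ into $B$ and none of $B$ into $A$; by density and continuity such a $g$ satisfies $g(A)=A$ and $g(B)=B$, and in particular fixes $c$. Hence $K\cap G\subseteq \stab{G}{c}$.

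Finally, since the $G$-orbit of $c$ is infinite, $\stab{G}{c}$ has infinite index in $G$, so $K\cap G$ has infinite index in $G$; but $K$ has finite index in $\vec T$ and $G\le\vec T$, forcing $K\cap G$ to have finite index in $G$, a contradiction. Therefore $t\in\vec T$. I expect the main obstacle to be the two structural inputs rather than the parity bookkeeping: promoting the preceding lemma to a uniform local statement at every dyadic point and organizing $U$ and $V$ cleanly as unions of standard dyadic intervals so that genuinely adjacent halves $A\subseteq U$ and $B\subseteq V$ can be located, and verifying that the midpoint stabilizer $\stab{G}{c}$ really has infinite index in $G\cong F_3$ (equivalently, that $c$ has an infinite $G$-orbit). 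The conjugation–parity computations themselves are routine given Theorem~\ref{dyadicparity}.
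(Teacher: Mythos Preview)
Your argument is correct, and it is genuinely different from the paper's. The paper never promotes the lemma to arbitrary dyadic points and never localizes; instead it exploits the specific dynamics of a single element. Concretely, for $f\in T\setminus\vec T$ the paper takes $g=(x_0x_1)^{-1}\in\vec F$, which pushes every point of $(0,1)$ toward $0$, picks two dyadic witnesses $t_0,t_1$ at which $f$ behaves inconsistently with respect to parity, and then observes that for all large $n$ the points $g^n(t_0),g^n(t_1)$ land in the small interval near $0$ where the stated lemma applies. A direct parity check then shows $f^{-1}g^nf\notin\vec T$ for all large $n$, so $\langle g\rangle$ meets $f\vec T f^{-1}$ in a finite set and $f$ is not in the commensurator.

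Your route trades this global dynamical trick for a local structural one: you upgrade the lemma to every dyadic point, partition the dyadics into the locally-preserving set $U$ and locally-switching set $V$, find adjacent standard dyadic halves in $U$ and $V$, and run an index argument inside a rescaled copy $G\cong\vec F$. The paper's approach is shorter and uses only the lemma as stated; yours requires the extra work you flag (the uniform local statement, and that the $G$-orbit of the midpoint is infinite), but both points are routine once noticed, since for $N$ large enough each interval $[m2^{-N},(m+1)2^{-N})$ maps under $t$ to a standard dyadic interval, giving a finite partition into $U$- and $V$-intervals, and since already $(x_0x_1)^n(1/2)\to 1$ in $\vec F$. One small remark: you assert $A\subseteq U$ on the left and $B\subseteq V$ on the right, but only adjacency in some order is guaranteed; your parity computation is symmetric in $U$ and $V$, so this does not affect the argument.
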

\begin{proof}
Let $f \in T \setminus \vec{T}$.  Then to check that $f$ is not in the commensurator of $\vec{T}$, it suffices to show that there exists $g \in \vec{T}$ such that for large enough $n$, $(g^n)^f \not\in \vec{T}$.  Choose $g = (x_0x_1)^{-1}$, and note that for any $t \in (0,1)$, $g^n(t)$ can be made arbitrarily small by taking $n$ large enough. 

Since $f \not\in \vec{T}$, there exists $t_0 \in S_i$ and $t_1 \in S_j$ for some $i, j \in \{0,1\}$ such that $f(t_0) \in S_i$ and $f(t_1) \not\in S_j$.  Let $t_k' = f(t_k)$ for $k = 1, 2$.  Then consider $(f^{-1}g^nf)(t_k') = f(g^n(f^{-1}(t_k'))) = f(g^n(t_k))$.  For large enough $n$, $g^n(t_i)$ is small enough that Lemma 1 applies to $f$.  Now $t_1' \in S_i$ and $t_2' \not\in S_j$ by assumption, noting that elements of $T$ preserve dyadic rationals, so $t_2' \in S_{1-j}$.  Since $g \in \vec{F}$, $g^n(t_1) \in S_i$ and $g^n(t_2) \in S_j$.  

By the previous lemma there are two cases. Suppose that for small enough values of $t$, $f$ preserves dyadic parity.  Then $f(g^n(f^{-1}(t_1'))) \in S_i$ with $t_1' \in S_i$, and $f(g^n(f^{-1}(t_2'))) \in S_j$ with $t_2' \not\in S_j$, showing that $f^{-1}g^nf \not\in \vec{T}$.  The other case where $f$ switches the parity is similar.
\end{proof}

 It was proven in \cite{} that the quasiregular representation of a subgroup is irreducible if the commensurator of the subgroup coincides with itself. 
 Thus from Corollary \ref{cor:commensurator}, we have the following theorem:
\begin{thm}
  The quasiregular representation $\ell^2(T/\vec{T})$ of $T$ is irreducible.   
\end{thm}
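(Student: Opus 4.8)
The plan is to deduce this theorem immediately from Corollary \ref{cor:commensurator} together with the classical criterion relating irreducibility of a quasiregular representation to self-commensurating subgroups. Recall the criterion: if $H \leq G$ satisfies $\mathrm{Comm}_G(H) = H$, then the quasiregular representation $\ell^2(G/H)$ is irreducible. Since Corollary \ref{cor:commensurator} establishes exactly that the commensurator of $\vec{T}$ in $T$ equals $\vec{T}$, the conclusion follows by taking $G = T$ and $H = \vec{T}$. Thus the only remaining content is to recall, or verify, why the self-commensurating condition yields irreducibility.

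To make the deduction self-contained I would argue through the commutant of $\pi := \lambda_{T/\vec{T}}$. A bounded operator on $\ell^2(T/\vec{T})$ commuting with every $\pi(g)$ is $T$-equivariant, hence is given by a kernel $K \colon (T/\vec{T}) \times (T/\vec{T}) \to \C$ satisfying $K(gx, gy) = K(x,y)$ for all $g \in T$. Such $T$-invariant kernels are determined by their values on representatives of the $T$-orbits on $(T/\vec{T})^2$, which are in bijection with the double cosets $\vec{T} \backslash T / \vec{T}$. Writing a double coset as $\vec{T} g \vec{T}$, the diagonal coset $\vec{T} e \vec{T} = \vec{T}$ contributes the identity operator, so irreducibility amounts to showing that no double coset with $g \notin \vec{T}$ contributes a nonzero bounded intertwiner.

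The final step is the boundedness analysis, which is where the commensurator enters. The kernel supported on the $T$-orbit of $(\vec{T}, g\vec{T})$ defines a bounded operator on $\ell^2(T/\vec{T})$ precisely when its row and column sums are finite, i.e., when both $[\vec{T} : \vec{T} \cap g\vec{T}g^{-1}]$ and $[\vec{T} : \vec{T} \cap g^{-1}\vec{T}g]$ are finite; these two finiteness conditions together say exactly that $g$ lies in $\mathrm{Comm}_T(\vec{T})$. By Corollary \ref{cor:commensurator} this forces $g \in \vec{T}$, so the only bounded intertwiners come from the trivial double coset and $\pi(T)' = \C\, I$. The genuine difficulty of the whole result already resides in Corollary \ref{cor:commensurator}, via the dyadic-parity characterization of $\vec{T}$ in Theorem \ref{dyadicparity}; the remaining obstacle here is purely the standard operator-theoretic bookkeeping ensuring that the two coset indices correctly encode commensurability and that the resulting kernels are genuinely bounded on $\ell^2$ rather than merely formally defined.
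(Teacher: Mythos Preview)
Your proposal is correct and follows exactly the paper's approach: invoke the classical Mackey-type criterion that $\mathrm{Comm}_G(H)=H$ implies irreducibility of $\ell^2(G/H)$, then apply Corollary~\ref{cor:commensurator}. The paper simply cites this criterion without proof, whereas you additionally sketch the commutant/double-coset argument behind it; this extra detail is fine but not required.
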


\subsection{A presentation of \texorpdfstring{$\vec{T}$}{Jones' subgroup of T}}\label{sec:presentation}

In this section, we determine first an infinite classical presentation for $\vec{T}$, and then deduce a finite presentation. Our infinite set of generators consists of all $x_{n-1}x_n = g_n$ and $c_{2n}$, for $n \in \Z$ and $n\geq 0$, using $c_0 = f_{\frac12}$ for convenience.  Note that each $g_n$ is indeed in $\vec{F}$ by Lemma 4.5 of \cite{gs15a}.

In the proof of Lemma \ref{lem:basic relation}, we will use the following relations of $T$ from \cite{cfp96} which hold for any integers $n,k$ such that $0\leq k \leq n$:

\begin{align}
    x_k^{-1}x_nx_k&=x_{n+1},~~~~~~k<n;\tag{T1}\label{T1}\\
    c_n&=x_nc_{n+1};\tag{T2}\label{T2}\\
    c_nx_k&=x_{k-1}c_{n+1},~~~~~~1\leq k;\tag{T3}\label{T3}\\
    c_nx_0&=c_{n+1}^2;\tag{T4}\label{T4}\\
    c_n^{n+2}&=1;\tag{T5}\label{T5}
\end{align}

\begin{lem}\label{lem:basic relation}
    If $n$ is a non-negative integer then
    \begin{align}
	g_k^{-1}g_ng_k&=g_{n+2},~~~1\leq k<n;\label{rel0a}\\
	c_{2n}^{2n+2}&=1;\label{rel0b}\\
    c_{2n}&=g_{2n+1}c_{2n+2};\label{rel1}\\
    c_{2n}g_k&=g_{k-1}c_{2n+2};~~~1<k<2n + 2\label{rel2}\\
    c_{2n}g_1&=c_{2n+2}^3\label{rel3}
    \end{align}
\end{lem}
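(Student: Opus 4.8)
The plan is to obtain all five identities by direct computation from the standard relations \eqref{T1}--\eqref{T5} of $T$, substituting the abbreviations $g_n = x_{n-1}x_n$ and $c_0 = f_{\frac12}$; since each new generator is a short word in the $x_i$ and the $c_j$, every identity should reduce to a handful of applications of \eqref{T1}--\eqref{T5}. Because the introduction fixes the left-to-right multiplication convention under which these relations hold as stated, I can apply them verbatim. I would treat the relations in increasing order of difficulty.

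The four relations \eqref{rel0b}--\eqref{rel3} are short manipulations. Relation \eqref{rel0b} is simply \eqref{T5} read at index $2n$. For \eqref{rel1}, two applications of \eqref{T2} give $c_{2n} = x_{2n}c_{2n+1} = x_{2n}x_{2n+1}c_{2n+2} = g_{2n+1}c_{2n+2}$. For \eqref{rel2}, I would write $c_{2n}g_k = c_{2n}x_{k-1}x_k$ and commute $c_{2n}$ past the two factors with \eqref{T3}, obtaining $x_{k-2}c_{2n+1}x_k = x_{k-2}x_{k-1}c_{2n+2} = g_{k-1}c_{2n+2}$; here the hypotheses $1 < k < 2n+2$ are exactly what is needed so that both uses of \eqref{T3} respect its index constraints. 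For \eqref{rel3}, I would expand $c_{2n}g_1 = c_{2n}x_0x_1$ and alternate \eqref{T4} and \eqref{T3}: \eqref{T4} gives $c_{2n}x_0 = c_{2n+1}^2$, then \eqref{T3} rewrites $c_{2n+1}x_1$ as $x_0c_{2n+2}$, and a final use of \eqref{T4} on $c_{2n+1}x_0$ collapses the word to $c_{2n+2}^3$.

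The remaining relation \eqref{rel0a} is the one requiring genuine care. I would view conjugation by $g_k = x_{k-1}x_k$ as a homomorphism, so that $g_k^{-1}g_ng_k = (g_k^{-1}x_{n-1}g_k)(g_k^{-1}x_ng_k)$, reducing the task to evaluating $g_k^{-1}x_jg_k$ for $j \in \{n-1,n\}$. Conjugating $x_j$ first by $x_{k-1}$ and then by $x_k$, two successive applications of \eqref{T1} raise its index by two, giving $g_k^{-1}x_jg_k = x_{j+2}$ whenever $j \ge k$. Since $k < n$ forces both $n-1 \ge k$ and $n \ge k$, both factors are eligible and $g_k^{-1}g_ng_k = x_{n+1}x_{n+2} = g_{n+2}$.

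I expect the only real obstacle to be the index bookkeeping in \eqref{rel0a}: one must check the inequality feeding \eqref{T1} at each of the four applications and confirm that the binding case $j = n-1$ is covered precisely by the hypothesis $k < n$. Beyond this, no tool outside \eqref{T1}--\eqref{T5} is needed, so the lemma is essentially a translation of the classical relations of $T$ into the generating set $\{g_n, c_{2n}\}$.
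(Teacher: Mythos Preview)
Your proposal is correct and follows essentially the same approach as the paper: each of the five relations is derived by a short direct computation using \eqref{T1}--\eqref{T5}, with \eqref{rel0b}--\eqref{rel3} handled identically and \eqref{rel0a} differing only cosmetically (you factor the conjugation through the homomorphism property, while the paper inserts $x_{k-1}x_{k-1}^{-1}$ and $x_kx_k^{-1}$ inline, but both amount to four applications of \eqref{T1}). Your index bookkeeping for \eqref{rel0a} is accurate.
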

\begin{proof}
\begin{align*}
g_k^{-1}g_ng_{k} &= (x_k^{-1}x_{k-1}^{-1})(x_{n-1}x_n)(x_{k-1}x_{k})\\ 
&= x_k^{-1}x_{k-1}^{-1}x_{n-1}(x_{k-1}x_{k-1}^{-1})x_nx_{k-1}x_{k}\\
&\overset{\ref{T1}}= x_k^{-1}x_nx_{n+1}x_{k}\\
&= x_k^{-1}x_n(x_kx_k^{-1})x_{n+1}x_{k}\\
&\overset{\ref{T1}}= x_{n+1}x_{n+2} \\
&= g_{n+2}
\end{align*}

Thus (\ref{rel0a}) holds.  (\ref{rel0b}) is the same as (\ref{T5}).

\[c_{2n} \overset{\ref{T2}}= x_{2n}c_{2n+1} \overset{\ref{T2}}= x_{2n}x_{2n+1}c_{2n+2} = g_{2n+1}c_{2n+2}\]

Hence (\ref{rel1}) holds.  Relations (\ref{rel2}) and (\ref{rel3}) are proven similarly:

\begin{align*}
c_{2n}g_k & = c_{2n}x_{k-1}x_k \\
&\overset{\ref{T3}}=x_{k-2}c_{2n+1}x_{k}\\
&\overset{\ref{T3}}=x_{k-2}x_{k-1}c_{2n+2}\\
&= g_{k-1}c_{2n+2}
\end{align*}
\vspace{-1em}
\begin{align*}
c_{2n}g_1 &= c_{2n}x_0x_1\\
&\overset{\ref{T4}}= c_{2n+1}^2x_1\\
&\overset{\ref{T3}}= c_{2n+1}x_0c_{2n+2}\\
&\overset{\ref{T4}}= c_{2n+2}^3
\end{align*}
\end{proof}

These relations also determine directly the following relations.
\begin{cor}\label{cor:extrarel}
    Suppose $n$ is a non-negative integer and $1\leq m\leq 2n+1$, then
    \begin{align}
    c_{2n}^m&=g_{2n+1-(m-1)}c_{2n+2}^m \label{rel4}\\
    c_{2n}^m&=c_{2n+2}^{m+2}g_m^{-1} \label{rel5}
    \end{align}
\end{cor}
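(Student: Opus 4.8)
The plan is to prove each of \eqref{rel4} and \eqref{rel5} by a short induction on $m$, feeding in one relation from Lemma \ref{lem:basic relation} at each step. The only subtlety is bookkeeping on the index ranges, so that the boundary case (where one is tempted to apply \eqref{rel2} outside its range) is instead absorbed into the base case.

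For \eqref{rel4} I would induct on $m$ with base case $m=1$, which reads $c_{2n}=g_{2n+1}c_{2n+2}$ and is exactly \eqref{rel1}. For the inductive step, assuming $c_{2n}^m = g_{2n+2-m}c_{2n+2}^m$, I would write $c_{2n}^{m+1} = c_{2n}\,c_{2n}^m = c_{2n}\,g_{2n+2-m}c_{2n+2}^m$ and apply \eqref{rel2} with $k=2n+2-m$. This is legitimate precisely when $1 < 2n+2-m < 2n+2$, i.e. $1\leq m\leq 2n$, which is exactly the range needed to advance from $m$ to $m+1$ and reach the full range $1\le m\le 2n+1$. The relation \eqref{rel2} gives $c_{2n}g_{2n+2-m}=g_{2n+1-m}c_{2n+2}$, so $c_{2n}^{m+1}=g_{2n+1-m}c_{2n+2}^{m+1}$, which is \eqref{rel4} with $m$ replaced by $m+1$ since $2n+1-m = 2n+1-\bigl((m+1)-1\bigr)$.

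For \eqref{rel5} I would again induct on $m$, now taking the base case $m=1$ from \eqref{rel3}: $c_{2n}g_1 = c_{2n+2}^3$ rearranges to $c_{2n}=c_{2n+2}^3 g_1^{-1}$. The key preliminary is to rewrite \eqref{rel2} in a form that pushes a $c_{2n}$ past a $g^{-1}$: multiplying $c_{2n}g_k = g_{k-1}c_{2n+2}$ on the left by $g_{k-1}^{-1}$ and on the right by $g_k^{-1}$ yields $g_{k-1}^{-1}c_{2n} = c_{2n+2}\,g_k^{-1}$, valid for $2\le k\le 2n+1$; reindexing by $j=k-1$ gives $g_j^{-1}c_{2n}=c_{2n+2}\,g_{j+1}^{-1}$ for $1\le j\le 2n$. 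Then assuming $c_{2n}^m = c_{2n+2}^{m+2}g_m^{-1}$, I would compute $c_{2n}^{m+1}=c_{2n}^m\,c_{2n}=c_{2n+2}^{m+2}\,g_m^{-1}c_{2n}$ and apply this rearranged relation with $j=m$ (legal for $1\le m\le 2n$) to obtain $c_{2n}^{m+1}=c_{2n+2}^{m+2}\,c_{2n+2}\,g_{m+1}^{-1}=c_{2n+2}^{m+3}g_{m+1}^{-1}$, which is \eqref{rel5} for $m+1$.

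Neither induction hides a genuine obstacle; the calculations are routine once the relations of Lemma \ref{lem:basic relation} are in hand. The one point requiring care is the range check: in both arguments the generic step uses \eqref{rel2} (or its rearrangement), which fails at the extreme index, and one must confirm that that extreme index corresponds exactly to the base case supplied by \eqref{rel1} and \eqref{rel3} respectively, so that every $m$ in $1\le m\le 2n+1$ is covered without a gap.
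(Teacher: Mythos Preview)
Your proposal is correct and takes essentially the same approach as the paper: both arguments amount to iterated application of \eqref{rel2}, with \eqref{rel1} and \eqref{rel3} supplying the boundary cases for \eqref{rel4} and \eqref{rel5} respectively. The paper simply compresses the inductions into one-line chains (writing $c_{2n}^m = c_{2n}^{m-1}c_{2n} = c_{2n}^{m-1}g_{2n+1}c_{2n+2} = g_{2n+2-m}c_{2n+2}^m$, and $c_{2n}^m g_m = c_{2n}g_1 c_{2n+2}^{m-1} = c_{2n+2}^{m+2}$) rather than spelling out the induction step and range checks as you do.
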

\begin{proof}
The first relation is proven in the following way:

\[c_{2n}^m = c_{2n}^{m-1}c_{2n} \overset{(\ref{rel1})}= c_{2n}^{m-1}g_{2n+1}c_{2n+2} \overset{(\ref{rel2})}= g_{2n+1-(m-1)}c_{2n+2}^m\]

For the second relation, we instead prove that $c_{2n}^mg_m = c_{2n+2}^{m+2}$. 

\[c_{2n}^mg_m \overset{(\ref{rel2})}= c_{2n}g_1c_{2n+2}^{m-1} \overset{(\ref{rel3})}= c_{2n+2}^{m+2}\]
\end{proof}

\begin{cor}\label{cor:movegkleft}
Let $k$, $n$, and $m$ be non-negative integers such that $k < 2n + 2$ and $1\leq m < 2n+2$.  Then
    \[c_{2n}^{m}g_k = \begin{cases}  g_{k-m}c_{2n+2}^m & k > m \\
    c_{2n+2}^{m+2} & k = m\\
    g_{2n+2+k-m}c_{2n+2}^{m+2} & k < m
    \end{cases}\]
\end{cor}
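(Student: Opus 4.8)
The plan is to prove all three branches purely by applying the relations already established in Lemma~\ref{lem:basic relation} and Corollary~\ref{cor:extrarel}; the corollary is really a bookkeeping statement recording what happens when a single generator $g_k$ is ``peeled'' leftward through the power $c_{2n}^m$. The governing move is (\ref{rel2}), which slides $g_k$ past one factor of $c_{2n}$ at the cost of decrementing its index and promoting that factor to $c_{2n+2}$, together with the boundary moves (\ref{rel3}) and (\ref{rel4}) that handle the index wrapping around past $1$. I would split into the three cases $k>m$, $k=m$, $k<m$ exactly as in the statement, and throughout assume $k\geq 1$, since $g_0$ is not among the chosen generators.

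For $k>m$ I would apply (\ref{rel2}) repeatedly, $m$ times in total, to obtain $c_{2n}^m g_k = c_{2n}^{m-1}g_{k-1}c_{2n+2} = \cdots = g_{k-m}c_{2n+2}^m$, which is the first branch. Each application is legitimate because (\ref{rel2}) requires the $g$-index to exceed $1$, and the successive indices $k,k-1,\ldots,k-m+1$ are all at least $2$ precisely because $k>m$; the upper constraint $k<2n+2$ guarantees we never leave the range in which (\ref{rel2}) was proven.

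For $k\le m$ I would first establish a single \emph{master identity} $c_{2n}^m g_k = c_{2n}^{m-k}c_{2n+2}^{k+2}$. To get it, apply (\ref{rel2}) exactly $k-1$ times, lowering the $g$-index to $1$ and accumulating $c_{2n+2}^{k-1}$ on the right, which leaves $c_{2n}^{m-k+1}g_1 c_{2n+2}^{k-1}$; then a single application of (\ref{rel3}), namely $c_{2n}g_1=c_{2n+2}^3$, yields $c_{2n}^{m-k}c_{2n+2}^{k+2}$. When $k=m$ the leftover power $c_{2n}^{m-k}$ is empty and we read off $c_{2n+2}^{m+2}$, the middle branch. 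When $k<m$, I would feed the leftover $c_{2n}^{m-k}$ into (\ref{rel4}) with exponent $m-k$, rewriting it as $g_{2n+2-(m-k)}c_{2n+2}^{m-k}$; combining the two powers of $c_{2n+2}$ then gives $g_{2n+2+k-m}c_{2n+2}^{m+2}$, the last branch.

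The only real subtlety --- and hence the main thing to be careful about --- is checking that every invocation of (\ref{rel2}), (\ref{rel3}), (\ref{rel4}) lands inside the index range for which it was proven. Concretely one must verify $1\le m-k\le 2n+1$ before applying (\ref{rel4}), which follows from $k\ge 1$ together with $m\le 2n+1$ (itself a consequence of $m<2n+2$), and that the decremented $g$-indices never drop below $1$ prematurely in either peeling argument. Beyond this index bookkeeping there is no genuine obstacle: the corollary is a formal consequence of the preceding relations.
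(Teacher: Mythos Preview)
Your proof is correct and follows essentially the same route as the paper. The paper likewise handles $k>m$ by iterating (\ref{rel2}), invokes (\ref{rel5}) directly for $k=m$, and for $k<m$ first derives the same intermediate identity $c_{2n}^m g_k = c_{2n}^{m-k}c_{2n+2}^{k+2}$ (via the technique you spell out with (\ref{rel2}) then (\ref{rel3})) before applying (\ref{rel4}); your unification of the $k\le m$ cases under a single ``master identity'' and your explicit index-range checks are minor expository refinements of the same argument.
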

\begin{proof}
If $k > m$, repeatedly apply (\ref{rel2}) to get the first relation.  If $k = m$, the relation is the same as (\ref{rel5}).  If $k < m$, then we use the same technique to get \[c_{2n}^mg_k = c_{2n}^{m-k}c_{2n+2}^{k+2} \overset{(\ref{rel4})}= g_{2n+1 - (m-k-1)}c_{2n+2}^{m-k+k+2} = g_{2n+2+k-m}c_{2n+2}^{m+2}\]
\end{proof}

\begin{cor}\label{cor:absrotation}
    Suppose $i,j,k,l$ are positive integers and $i<2j + 2, k<2l + 2$, then there exists $m,n$
    are positive integers with $m < 2n + 2$, and $p, q$ are positive elements in $\{g_t,t\geq1\}$ such that
    \begin{equation*}
    c_{2j}^i c_{2l}^k=p c_{2n}^m q^{-1}
    \end{equation*}
\end{cor}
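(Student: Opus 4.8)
The plan is to push both rotation powers up to a single large common even scale $2n$, turning each into a $g$-word times a power of $c_{2n}$, then to merge the two equal-scale powers and reduce the resulting exponent modulo the order of $c_{2n}$. The two scaling moves are exactly the iterated forms of (\ref{rel4}) and (\ref{rel5}): (\ref{rel4}) raises the scale of a power of $c$ while leaving the exponent fixed and depositing a $g$ on the \emph{left}, and (\ref{rel5}) raises the scale while depositing a $g^{-1}$ on the \emph{right} (and bumping the exponent by $2$). So the left factor $c_{2j}^i$ will be handled with (\ref{rel4}) to feed the positive word $p$, and the right factor $c_{2l}^k$ with (\ref{rel5}) to feed the (inverse of the) positive word $q$.

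\textbf{Raising each factor.} First I would iterate (\ref{rel4}) on the left factor. Since $i < 2j+2$ means $1 \le i \le 2j+1$, each application $c_{2j+2s}^i = g_{2j+2s+2-i}\,c_{2j+2s+2}^i$ is legal (the constraint $i \le 2(j+s)+1$ only gets easier as $s$ grows, and the new subscript $2j+2s+2-i \ge 1$), so after $M$ steps
\[ c_{2j}^i = g_{2j+2-i}\,g_{2j+4-i}\cdots g_{2j+2M-i}\,c_{2j+2M}^i = p\,c_{2j+2M}^i, \]
with $p$ a positive word in $\{g_t : t \ge 1\}$. Symmetrically, iterating (\ref{rel5}) on the right factor is legal because $k \le 2l+1$ is preserved along the iteration (the exponent and scale grow in lockstep, keeping the hypothesis $k+2s \le 2(l+s)+1$ of (\ref{rel5}) true), yielding after $N$ steps
\[ c_{2l}^k = c_{2l+2N}^{k+2N}\,g_{k+2(N-1)}^{-1}\cdots g_{k+2}^{-1}g_k^{-1} = c_{2l+2N}^{k+2N}\,q^{-1}, \]
with $q = g_k g_{k+2}\cdots g_{k+2(N-1)}$ again a positive word. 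Choosing $M = n-j$ and $N = n-l$ for a common target scale $2j+2M = 2l+2N = 2n$ (with $n$ large), the two middle powers have the same base $c_{2n}$.

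\textbf{Merging and reducing.} Combining the two expressions gives $c_{2j}^i c_{2l}^k = p\,c_{2n}^i c_{2n}^{\,k+2(n-l)}\,q^{-1} = p\,c_{2n}^{\,i+k+2(n-l)}\,q^{-1}$, and finally I would reduce the exponent modulo the order using (\ref{rel0b}), $c_{2n}^{2n+2}=1$, to obtain $m$ with $0 \le m < 2n+2$ and $c_{2n}^{\,i+k+2(n-l)} = c_{2n}^{m}$. The main bookkeeping obstacle is simply verifying that every index stays in the admissible range throughout the two iterations (done above) and that the reduced exponent can be taken positive: since $i+k+2(n-l) \equiv i+k-2l-2 \pmod{2n+2}$, the representative in $\{1,\dots,2n+1\}$ is positive whenever $i+k \neq 2l+2$ (taking $n$ large in the case $i+k < 2l+2$). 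The sole degenerate case $i+k = 2l+2$ forces the exponent to vanish, meaning the product already lies in $\vec{F}$ with trivial rotation part; there $c_{2j}^i c_{2l}^k = p\,q^{-1}$, so the asserted form holds with the convention $c_{2n}^0 = 1$. This completes the argument.
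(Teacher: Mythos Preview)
Your argument is correct and is essentially the same as the paper's: both use (\ref{rel4}) and (\ref{rel5}) to raise the subscripts of the two $c$-powers to a common value (depositing positive $g$'s on the left and negative ones on the right), then combine and reduce the exponent modulo $2n+2$. You are simply more explicit about the bookkeeping, and you correctly flag the degenerate case $m=0$, which the paper's own proof also quietly allows (``we may assume that $0 \le m < 2n+2$'') despite the statement saying $m$ is positive.
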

\begin{proof}
By using relations (\ref{rel4}) and (\ref{rel5}), we can increase $j$ or $i$ until $i=j$, up to multiplying by a postive element on the left or a negative element on the right.  Since $c_{2n}$ is an order $2n+2$ element, we may assume that $0 \leq m < 2n+2$.
\end{proof}

Now we determine an infinite presentation for the subgroup $\vec{T}$.

\begin{thm}\label{thm:presentation}
    $\vec{T}$ is generated by $g_k,k\geq1$ and $c_n,n\geq0$ and is defined by the following relations:
    \begin{align*}
    g_k^{-1}g_ng_k&=g_{n+2}~~~~~~1\leq k<n\\ 
    c_{2n}^{2n+2}&=1\\
    c_{2n}&=g_{2n+1}c_{2n+2}\\
    c_{2n}g_k&=g_{k-1}c_{2n+2}~~~~~~1<k<2n+2\\
    c_{2n}g_1&=c_{2n+2}^3
    \end{align*}
\end{thm}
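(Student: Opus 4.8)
The plan is to prove the theorem by showing that the natural surjection from the abstractly presented group onto $\vec{T}$ is injective, via a canonical form that is reachable using only the listed relations and that matches the reduced tree pairs of $\vec{T}$. Write $G$ for the group given by the generators $g_k$ ($k\ge 1$), $c_{2n}$ ($n\ge 0$) and the five relation families. All of these relations hold in $\vec{T}$ by Lemma \ref{lem:basic relation}, and since $g_1,g_2,g_3$ generate $\vec{F}$ (Theorem 1 of \cite{gs15a}) with (\ref{rel0a}) expressing every $g_n$, $n\ge 4$, in terms of them, Theorem \ref{vecTgenerators} shows the generators of $G$ map onto a generating set of $\vec{T}$. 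Hence there is a surjection $\phi:G\twoheadrightarrow\vec{T}$, and it remains only to prove that $\phi$ is injective.

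First I would establish a normal form for elements of $G$ using the defining relations alone, paralleling the proof in \cite{cfp96} that (T1)--(T5) present $T$. A general word is an alternating product of (signed) $\vec{F}$-syllables in the $g_k$ and powers of the $c_{2n}$. Using Corollary \ref{cor:absrotation} I would merge any two adjacent rotation powers into a single $c_{2n}^m$ with $0\le m<2n+2$, and using Corollary \ref{cor:movegkleft} I would commute each $g_k$-syllable past a rotation power at the cost of shifting its index, thereby pushing all positive $g$-syllables to the left and all negative ones to the right. Iterating collapses the word, by the relations alone, to the form $p\,c_{2n}^m\,q^{-1}$ with $p,q$ positive words in the $g_k$ and $0\le m<2n+2$; finally the $F_3$-presentation of $\vec{F}$ (which is exactly the relation family (\ref{rel0a})) lets me put $p$ and $q$ into the standard seminormal form for $\vec{F}$. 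This mirrors the decomposition $p\,c_{2n}^m\,q$ produced in the proof of Theorem \ref{vecTgenerators}.

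The remaining redundancy is that relations (\ref{rel1}), (\ref{rel4}), (\ref{rel5}) relate $c_{2n}$ to $c_{2n+2}$, so the same element admits many expressions differing by ``stabilization'' of the rotation index. I would therefore single out a \emph{reduced} normal form by requiring $n$ to be minimal -- equivalently, that no application of (\ref{rel1})/(\ref{rel4}) lowers the index while keeping $p,q$ positive and $0\le m<2n+2$ -- and show, again using only the relations, that every element of $G$ has such a reduced form. To conclude I would set up a bijection between reduced normal forms and elements of $\vec{T}$: given a reduced tree pair $(R,S,k)$ for an element of $\vec{T}$, comparing $R$ and $S$ against the spine tree $S_{2n}$ (with $2n+2$ the number of leaves) reads off positive elements $p\in\vec{F}_+$ and $q\in\vec{F}_-$ while the cyclic offset gives $m=k-1$, exactly as in Theorem \ref{vecTgenerators}; conversely a reduced normal form determines such a tree pair. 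Since every element of $G$ equals its reduced normal form in $G$, showing these two assignments are mutually inverse shows that $\phi$ sends distinct reduced normal forms to distinct elements of $\vec{T}$, whence $\ker\phi$ is trivial and $\phi$ is an isomorphism.

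I expect the uniqueness in this last step to be the main obstacle. Producing \emph{some} expression $p\,c_{2n}^m\,q^{-1}$ is routine given the corollaries; the difficulty is proving the reduced form is genuinely canonical, i.e. that the rewriting is confluent and two reduced normal forms representing the same element of $\vec{T}$ are literally equal. This is precisely where the even parity of the rotation index -- forced by the bipartite/alternating structure of the Thompson graphs in Lemma \ref{lemalternating} -- together with the minimality of $n$ must be used to eliminate the stabilization ambiguity and match the uniqueness of reduced tree pairs.
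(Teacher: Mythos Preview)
Your overall strategy---build the surjection $\phi:G\twoheadrightarrow\vec{T}$, reduce every word to the shape $p\,c_{2n}^m q^{-1}$ using the relations (this is exactly the paper's Lemma \ref{lem: standard form}), and then prove injectivity via a unique ``reduced'' normal form matched to reduced tree pairs---is viable in principle and mirrors the CFP argument for $T$. But you correctly flag the uniqueness step as the main obstacle and do not actually carry it out; making that confluence argument rigorous (minimality of $n$, interaction with the seminormal forms of $p$ and $q$, and the stabilization relations (\ref{rel1}), (\ref{rel4}), (\ref{rel5})) is genuinely delicate and would be at least as long as everything else combined.

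The paper avoids this entirely with a short algebraic trick. After obtaining the same form $p\,c_{2n}^m q^{-1}$, it observes two things. First (Lemma \ref{lem:vecFsubgroup}), the subgroup of $G$ generated by the $g_k$ is already isomorphic to $\vec{F}$, because the relations (\ref{rel0a}) alone present $F_3\cong\vec{F}$; hence $\phi|_{\vec{F}}$ is injective. Second (Lemma \ref{lem:properquotient}), any proper quotient of $G$ must restrict to a proper quotient of $\vec{F}$: if $g=p\,c_{2n}^m q^{-1}$ lies in $\ker\phi$ then $\phi((p^{-1}q)^{2n+2})=\phi(c_{2n}^{m(2n+2)})=1$, and torsion-freeness of $\vec{F}$ forces a nontrivial $\vec{F}$-element in the kernel when $p\ne q$; when $p=q$ one applies (\ref{rel4}) once to write $c_{2n}^m=g_{2n+2-m}c_{2n+2}^m$ and runs the same argument. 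Since $\phi|_{\vec{F}}$ is an isomorphism, $\phi$ cannot be a proper quotient, so $\ker\phi=1$.

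In short: your normal-form program is not needed. The finite-order relation $c_{2n}^{2n+2}=1$ together with torsion-freeness of $\vec{F}$ converts any hypothetical kernel element into a contradiction inside $\vec{F}$, which is far shorter than establishing and proving uniqueness of a reduced normal form.
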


Before we proceed to the proof, we need some lemmas to establish the structure that these relations provide.  For the rest of this section, we will refer to $G$ as the group with presentation given in Theorem \ref{thm:presentation}, and show that $G$ is isomorphic to $\vec{T}$.

\begin{lem}\label{lem:vecFsubgroup}
$\vec{F}$ is a subgroup of $G$.
\end{lem}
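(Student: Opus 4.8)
The plan is to realize $\vec{F}$ as the subgroup $H = \gen{g_k : k \geq 1}$ of $G$ and then to show that the natural surjection onto $H$ from an abstract copy of $\vec{F}$ is injective. By \cite{gs15a} we have $\vec{F} \iso F_3$, and under this identification $\vec{F}$ has the infinite presentation $\gen{g_k,\ k\geq 1 \mid g_k^{-1}g_ng_k = g_{n+2},\ 1\leq k<n}$, with $g_k$ corresponding to $x_{k-1}x_k$. Since every one of these relations appears verbatim among the defining relations of $G$ listed in Theorem \ref{thm:presentation}, the assignment $g_k \mapsto g_k$ extends to a well-defined homomorphism $\psi \colon \vec{F} \to G$ whose image is exactly $H$. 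Hence it remains only to prove that $\psi$ is injective.

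To do this I would route through $\vec{T}$ rather than attempt a retraction $G \to \vec{F}$ directly. All five families of relations defining $G$ are verified to hold among the corresponding elements $g_k = x_{k-1}x_k$ and $c_{2n}$ of $\vec{T}$ in Lemma \ref{lem:basic relation}, so by the universal property of the presentation there is a homomorphism $\theta \colon G \to \vec{T}$ sending each generator to the like-named element of $\vec{T}$; it is surjective because those elements generate $\vec{T}$ by Theorem \ref{vecTgenerators}. Now consider the composite $\theta\circ\psi \colon \vec{F} \to \vec{T}$. On each generator it acts by $g_k \mapsto g_k$, so it is precisely the inclusion of $\vec{F}$ into $\vec{T}$ as a concrete subgroup, which is injective. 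Since $\theta\circ\psi$ is injective, so is $\psi$; combined with the fact that $\psi$ maps onto $H$, this yields an isomorphism $\psi\colon \vec{F} \to H \leq G$, proving that $\vec{F}$ is a subgroup of $G$.

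The point that makes this detour necessary — and the step I expect to be the only genuine obstacle — is that one cannot simply build a retraction $G \to \vec{F}$ killing the rotational generators. Since $\vec{F} \iso F_3$ is torsion-free, the relations $c_{2n}^{2n+2} = 1$ would force the image of every $c_{2n}$ to be trivial, and then the relation $c_{2n} = g_{2n+1}c_{2n+2}$ would collapse $g_{2n+1}$ to the identity, a contradiction. Passing instead through $\vec{T}$, which genuinely contains the torsion elements $c_{2n}$ while still having $\vec{F}$ as a bona fide subgroup, is exactly what resolves this tension. Note that the argument uses only the well-definedness and surjectivity of $\theta$ — the easy half of the eventual isomorphism $G \iso \vec{T}$ — so no circularity with Theorem \ref{thm:presentation} is introduced.
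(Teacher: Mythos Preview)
Your argument is correct and is essentially identical to the paper's own proof: build the map $\psi\colon \vec{F}\to G$ from the presentation of $\vec{F}$, build the natural surjection $\theta\colon G\to\vec{T}$ from Lemma~\ref{lem:basic relation}, and observe that the composite is the inclusion $\vec{F}\hookrightarrow\vec{T}$, forcing $\psi$ to be injective. Your added paragraph explaining why a direct retraction $G\to\vec{F}$ is impossible is a helpful gloss not present in the paper, but the core mechanism is the same.
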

\begin{proof}
Since the relations of $G$ also hold in $\vec{T}$, there is a natural homomorphism $\phi$ from $G$ to $\vec{T}$, sending generators to generators.  Since $\vec{F}$ has a finite presentation \cite{gs97}, and its relations are included in the relations of $G$ (using $g_k$ as its generators), there is also a homomorphism $\alpha$ from $\vec{F}$ to $G$ that takes generators to generators.  Thus $\alpha\circ\phi$ is a homomorphism taking $g_k$ to $g_k$, and hence is the identity isomorphism of $\vec{F}$, implying that $\alpha$ is also an isomorphism, and that $\vec{F}$ is a subgroup of $G$.
\end{proof}

\begin{lem}\label{lem: standard form}
  For every $g\in G$, we have 
    \begin{equation*}
    g=pc_{2n}^mq^{-1},~~0 \leq m < 2n+2,
    \end{equation*}
    where $p,q$ are positive elements in $\vec{F} = \{g_k,k\geq1\} \leq G$.
\end{lem}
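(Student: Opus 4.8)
The plan is to prove existence of the normal form by showing that the set
$N = \{\,p\,c_{2n}^m\,q^{-1} : p,q \in \vec{F}_+,\ 0 \le m < 2n+2\,\}$
of elements already in the asserted shape is all of $G$. Here $\vec{F}_+$ is the submonoid of positive elements of $\vec{F} = \langle g_k : k\ge 1\rangle$, and I will use freely that $\vec{F}\le G$ (Lemma \ref{lem:vecFsubgroup}) together with the normal form $ab^{-1}$ (with $a,b\in\vec{F}_+$) for elements of $\vec{F}$. First I would observe that $N$ contains the identity (take $n=m=0$, using $c_0=f_{\frac12}$ of order $2$) and every generator ($g_k = g_k\,c_0^0$ and $c_{2n}=c_{2n}^1$). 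Since every $g\in G$ is a finite product of generators, it then suffices to prove that $N$ is closed under right multiplication by each generator and its inverse; an induction on word length finishes the argument.

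Fix $w = p\,c_{2n}^m\,q^{-1}\in N$. Multiplication by $g_k^{-1}$ is immediate: $w g_k^{-1} = p\,c_{2n}^m\,(g_k q)^{-1}$ with $g_k q\in\vec{F}_+$, so $wg_k^{-1}\in N$. For right multiplication by $g_k$, I would write the $\vec{F}$-element $q^{-1}g_k$ in normal form $a b^{-1}$ with $a,b\in\vec{F}_+$, giving $w g_k = p\,(c_{2n}^m a)\,b^{-1}$, and then push the positive word $a$ leftward across the $c$-power. This is carried out by repeated application of Corollary \ref{cor:movegkleft}, each step turning $c_{2N}^{m'} g_j$ into a positive generator times a power of $c_{2N+2}$; an inner induction on the length of $a$ then yields $c_{2n}^m a = a'\,c_{2\tilde n}^{\tilde m}$ with $a'\in\vec{F}_+$, whence $w g_k = (p a')\,c_{2\tilde n}^{\tilde m}\,b^{-1}\in N$.

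For right multiplication by $c_{2l}^{\pm 1}$, I would first replace $c_{2l}^{-1}$ by the positive power $c_{2l}^{2l+1}$ via the finite-order relation (\ref{rel0b}), so only positive powers of $c_{2l}$ need be treated. Writing $q^{-1}c_{2l}^{r} = (c_{2l}^{-r}q)^{-1}$ and using $c_{2l}^{-r}=c_{2l}^{2l+2-r}$, Corollary \ref{cor:movegkleft} lets me push $q$ left across this $c$-power to obtain $q^{-1}c_{2l}^{r} = c_{2L}^{s}\,q''^{-1}$ for some $q''\in\vec{F}_+$. Thus $w c_{2l}^{r} = p\,c_{2n}^m c_{2L}^{s}\,q''^{-1}$, and the two adjacent $c$-powers are merged by Corollary \ref{cor:absrotation} into $p'''\,c_{2M}^{t}\,u^{-1}$ with $p''',u\in\vec{F}_+$, so that $w c_{2l}^{r} = (p p''')\,c_{2M}^{t}\,(q'' u)^{-1}\in N$. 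Whenever an exponent of some $c_{2N}$ exceeds $2N+1$, it is reduced modulo $2N+2$ by (\ref{rel0b}) so that the constraint $0\le m<2n+2$ holds.

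I expect the genuine difficulty to be the index bookkeeping rather than any conceptual point. Corollaries \ref{cor:movegkleft} and \ref{cor:absrotation} carry hypotheses such as $k<2n+2$, and to apply them I must first inflate the index of the relevant $c$-power; the tool for this is relation (\ref{rel4}) from Corollary \ref{cor:extrarel}, which rewrites $c_{2n}^m$ as a positive element times $c_{2n+2}^m$, raising the index while \emph{leaving the exponent $m$ unchanged} and hence preserving $m<2n+2$. The points requiring care are therefore verifying that this inflation can always be arranged to clear the finitely many generators occurring in the positive word being pushed, and that each ``push past a $c$-power'' terminates; everything else is a direct assembly of the relations already established.
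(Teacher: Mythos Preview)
Your proposal is correct and follows essentially the same approach as the paper: both arguments show that the set $N$ of elements in the stated shape is all of $G$ by exhibiting closure, and both rely on the same machinery---the $\vec{F}$ normal form $ab^{-1}$, Corollary~\ref{cor:movegkleft} to push positive generators past a $c$-power, and Corollary~\ref{cor:absrotation} to merge two adjacent $c$-powers. The only difference is organizational: the paper checks closure of $N$ under multiplication of two arbitrary elements of $N$ (writing $q_1^{-1}p_2=p_3q_3^{-1}$, pushing $p_3$ left and $q_3$ right, then merging the two $c$-powers), whereas you check closure under right multiplication by a single generator and induct on word length; your treatment of the index hypotheses in Corollary~\ref{cor:movegkleft} via relation~(\ref{rel4}) is in fact more explicit than the paper's.
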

\begin{proof}
    Let $H$ be the subset of $G$ consisting of all elements of the form $pc_{2n}^mq^{-1}$, with $m < 2n+2$, and $p$ and $q$ positive elements in $\vec{F}$.  Since $H$ contains the generators of $G$ and is closed under taking inverses, it suffices to prove that $H$ is closed under multiplication, and hence $H = G$.  Let $p_1c_{2n_1}^{m_1}q_1^{-1}$ and $p_2c_{2n_2}^{m_2}q_2^{-1}$ be two elements of $H$, and note that the product of two positive elements of $F$ is still an element of $F$ \cite{cfp96}, hence the same is true in $\vec{F}$.  Moreover, $p_1c_{2n_1}^{m_1}q_1^{-1}p_2c_{2n_2}^{m_2}q_2^{-1} = p_1c_{2n_1}^{m_1}p_3q_3^{-1}c_{2n_2}^{m_2}q_2^{-1}$ for some positive elements $p_3$ and $q_3$, since $q_1^{-1}p_2$ is in $\vec{F}$.  
    
    First we show that $p_1c_{2n_1}^{m_1}p_3$ can be written as $p_4c_{2n_3}^{m_3}$ for some positive element $p_4$ and $n_3, m_3$ natural numbers. Let $g_k$ to be the first letter in $p_3$.  By Corollary \ref{cor:movegkleft}, $g_k$ can be moved to the left of $c_{2n_1}^{m_1}$, with possibly increasing $n_1$ or $m_1$.  Continue in this way to move all of $p_3$ to the left of $c_{2n_1}^{m_1}$, to get the desired form $p_4c_{2n_3}^{m_3}$, since the product of positive elements of $\vec{F}$ is still  a positive element.
    
    Similarly, since $(q_3^{-1}c_{2n_2}^{m_2}q_2^{-1})^{-1} = q_2c_{2n_2}^{-m_2}q_3$, we can rewrite $q_3^{-1}c_{2n_2}^{m_2}q_2^{-1}$ as $c_{2n_4}^{m_4}q_4^{-1}$.  Finally, $c_{2n_3}^{m_3}c_{2n_4}^{m_4}$ can be written as $p_5c_{2n}^mq_5^{-1}$ by Corollary \ref{cor:absrotation}.  Thus 
    \begin{align*}
    p_1c_{2n_1}^{m_1}q_1^{-1}p_2c_{2n_2}^{m_2}q_2^{-1} &=p_1c_{2n_1}^{m_1}p_3q_3^{-1}c_{2n_2}^{m_2}q_2^{-1} = p_4c_{2n_3}^{m_3}c_{2n_4}^{m_4}q_4^{-1} \\
    &= p_4p_5c_{2n}^{m}q_4^{-1}q_5^{-1} = pc_{2n}^mq^{-1}
      \end{align*}
    \end{proof}
    
    \begin{lem}\label{lem:properquotient}
    If $\phi: G \to G/N$ is a proper quotient homomorphism, then $\phi$ restricted to $\vec{F}$ is a proper quotient of $\vec{F}$ as well.
    \end{lem}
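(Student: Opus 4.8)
The statement is equivalent to the claim that every nontrivial normal subgroup $N$ of $G$ meets $\vec F$ nontrivially: writing $N=\ker\phi$, the map $\phi$ is a proper quotient precisely when $N\neq 1$, and $\phi|_{\vec F}$ fails to be injective precisely when $N\cap \vec F\neq 1$. Note also that $N\cap\vec F$ is automatically normal in $\vec F$. So the plan is to take an arbitrary nontrivial $w\in N$ and manufacture from it, using normality of $N$, a nontrivial element that lies in the subgroup $\vec F\le G$ of Lemma~\ref{lem:vecFsubgroup}.

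First I would put $w$ into a convenient shape. By Lemma~\ref{lem: standard form}, $w=pc_{2n}^mq^{-1}$ with $p,q$ positive elements of $\vec F$ and $0\le m<2n+2$. If $m=0$ then $w=pq^{-1}\in\vec F$ is already a nontrivial element of $N\cap\vec F$ and we are done. Otherwise, conjugating $w$ by $c_{2n}^{-m}p^{-1}$ (which keeps us inside $N$, since $N$ is normal) replaces $w$ by $ac_{2n}^m$ with $a=q^{-1}p\in\vec F$, so we may assume $w=ac_{2n}^m$ with $1\le m<2n+2$. Because $\phi$ restricts to an isomorphism on $\vec F$ and $\phi(c_{2n})^m$ moves $0$ for $0<m<2n+2$ (clear from the description of $c_{2n}$ in Figure~\ref{cn}), the element $c_{2n}^m$, and hence $w$, does not lie in $\vec F$.

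Now I would run the standard commutator/disjoint-support argument on the coset space $\Omega=G/\vec F$, whose base point $\omega_0=\vec F$ has stabiliser exactly $\vec F$. For any $g\in\vec F$ that also fixes the coset $\omega_1:=w^{-1}\omega_0$, the commutator $[w,g]=wgw^{-1}g^{-1}$ lies in $N$ by normality and fixes $\omega_0$: indeed $g$ fixes $\omega_0$, and $wgw^{-1}$ fixes $\omega_0$ because $g$ fixes $w^{-1}\omega_0$. Hence $[w,g]\in N\cap\vec F$, and it only remains to choose $g$ so that $[w,g]\neq 1$. Here I would use that $\phi$ is faithful on $\vec F$, so elements of $\vec F\le G$ carry genuine supports on the circle, and pick $g\in\vec F$ supported in a small interval $J$ with $J$ and $wJ$ disjoint and both avoiding $0$ (possible since $w$ moves $0$); then $[w,g]=(wgw^{-1})g^{-1}$ is a product of two elements with disjoint nonempty supports, hence nontrivial, and supported away from $0$, hence in $\vec F$. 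This yields the desired nontrivial element of $N\cap\vec F$.

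The main obstacle is exactly this last step: one must exhibit the auxiliary $g\in\vec F$ concretely and certify, entirely within $G$ (we are not yet entitled to assume $G\cong\vec T$, since this lemma is what forces $\phi$ to be injective), that the resulting commutator both is nontrivial and fixes $\omega_0$. The cleanest route is to transport the support bookkeeping to the circle through the embedding $\vec F\hookrightarrow\vec T$ and the action of $G$ on $\Omega$, using $f_{\frac12}=c_0$ together with the rich, $F_3$-like supply of finitely supported elements of $\vec F$ to realise a $g$ whose support and its $w$-translate are disjoint and miss a neighbourhood of the base point. Everything else---the normal-form reduction and the $m=0$ case---is routine.
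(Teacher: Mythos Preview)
Your approach has a genuine gap at precisely the point you flag as ``the main obstacle,'' and I do not see how to close it. You want to certify, \emph{inside the abstract group $G$}, that a commutator $[w,g]$ is nontrivial by a disjoint-support argument. But supports only make sense once you have an action on the circle, and the only such action available is via the surjection $G\to\vec T$. Since $w\in N=\ker\phi$, every commutator $[w,g]$ is killed by $\phi$, so you cannot detect its nontriviality in $\vec T$; and the coset action of $G$ on $G/\vec F$ carries no obvious geometric structure that would let you speak of ``supports'' or ``small intervals.'' Your suggestion to ``transport the support bookkeeping to the circle'' is exactly the circular step you yourself warn against: it presupposes the isomorphism $G\cong\vec T$ that this lemma is meant to establish. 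Likewise, verifying that $wgw^{-1}\in\vec F$ (i.e.\ that $g$ fixes $\omega_1$) requires computing $c_{2n}^m\vec F c_{2n}^{-m}$ inside $G$, which is not straightforward from the presentation alone.

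The paper's argument avoids all of this with a short, purely algebraic torsion trick. From $w=pc_{2n}^mq^{-1}\in N$ one gets $\phi(p^{-1}q)=\phi(c_{2n}^m)$; raising to the power $2n+2$ and using the relation $c_{2n}^{2n+2}=1$ (which holds in $G$ by the presentation) gives $\phi\bigl((p^{-1}q)^{2n+2}\bigr)=1$. If $p\neq q$, torsion-freeness of $\vec F$ makes $(p^{-1}q)^{2n+2}$ a nontrivial element of $N\cap\vec F$. If $p=q$, one rewrites $c_{2n}^m=g_{2n+2-m}\,c_{2n+2}^m$ via relation~(\ref{rel4}) and repeats the same torsion argument with $c_{2n+2}$, producing a nontrivial power of $g_{2n+2-m}$ in $N\cap\vec F$. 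No geometry, no commutators, no circularity: the finite order of $c_{2n}$ and the torsion-freeness of $\vec F$ do all the work.
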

    \begin{proof}
     Let $g \in G$ such that $\phi(g) = 1$ and $g \not=1$.  It follows from Lemma \ref{lem: standard form} that $g$ is of the form $g = pc_{2n}^mq^{-1}$, where $m < 2n+2$ and $p$ and $q$ are positive elements in $\vec{F} \leq G$. Since $\phi(g)=1$, we have $\phi(c_{2n}^m)=\phi(p^{-1}q)$, and since the order of $c_{2n}$ is $2n+2$, then $\phi((p^{-1}q)^{2n+2}) = \phi(c_{2n}^{2n+2}) = \phi(1) = 1$.  We now have two cases: either $p = q$ or $p\not=q$.  If $p\not=q$, then since $\vec{F}$ is torsion free, $(p^{-1}q)^{2n+2} \not=1$, implying that $\phi$ restricted to $\vec{F}$ is a proper quotient of $\vec{F}$.  
   
    If $p = q$, since $g \not= 1$, it must be that $c_{2n}^m \not= 1$, and in particular $m > 0$.  Then 
    \begin{align*}
       1 &= \phi(p^{-1}q) = \phi(c_{2n}^m)\\
       &\overset{(\ref{rel4})}= \phi(g_{2n+1-(m-1)}c_{2n+2}^m)
    \end{align*}
   
   Note that $m < 2n+2$, so $2n+1-(m-1) > 0$.  Then $\phi(g_{2n+1-(m-1)}^{2n+4}) = \phi(c_{2n+2}^{-(2n+4)m}) = 1$, but $g_{2n+1-(m-1)} \not= 1$.  Thus once again, $\phi$ restricted to $\vec{F}$ is a proper quotient of $\vec{F}$.

    \end{proof}

\begin{proof}[Proof of Theorem \ref{thm:presentation}]Let $\phi: G \to \vec{T}$ be the surjective homomorphism that sends generators to generators.  By Lemma \ref{lem:vecFsubgroup}, $\phi$ restricted to $\vec{F}$ is an isomorphism. By Lemma \ref{lem:properquotient}, $\phi$ must then have trivial kernel.  Thus $\phi$ is an isomorphism
\end{proof}

Now we can show that only finitely many of these relations are needed, and thus give a finite presentation.

\begin{cor}\label{cor:finitepresentation}$\vec{T}$ has a finite presentation with generators $\{g_1,g_2,g_3,c_0\}$ and relations
\begin{align*}
g_1^{-1}g_3g_1 &= g_2^{-1}g_3g_2\\
g_1^{-3}g_2g_1^3 &= g_3^{-1}g_1^{-2}g_2g_1^2g_3\\
g_1^{-2}g_3g_1^2 &= g_2^{-1}g_1^{-1}g_3g_1g_2\\
g_1^{-2}g_3g_1^2 &= g_3^{-1}g_1^{-1}g_3g_1g_3\\
g_1^{-2}g_2g_1^2 &= g_2^{-1}g_1^{-1}g_2g_1g_2\\
g_1^{-2}g_2g_1^2 &= g_3^{-1}g_1^{-1}g_2g_1g_3\\
c_2g_2 &= g_1c_4\\
c_2g_3 &= g_2c_4\\
c_4g_4 &= g_3c_6\\
c_0g_1 &= c_2^3\\
c_2g_1 &= c_4^3\\
c_0^2 &= 1
\end{align*}
where the elements $c_{2n}$ and $g_n$ are defined inductively by $c_{2n} = g_{2n-1}^{-1}c_{2n-2}$ and $g_n = g_{n-2}^{-1}g_{n-1}g_{n-2}$.
\end{cor}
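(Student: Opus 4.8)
The plan is to derive the finite presentation from the infinite one of Theorem~\ref{thm:presentation} by a (formally infinite) sequence of Tietze transformations, and then to check that only the twelve listed relations survive. Write $G$ for the group of Theorem~\ref{thm:presentation}, so $G\cong\vec T$, and let $G'$ be the group defined by the finite presentation in the statement, with $g_n$ and $c_{2n}$ read as the abbreviations given there. Every one of the twelve relations holds in $G$: the first six are the defining relations of $\vec F\cong F_3$ in the generators $g_1,g_2,g_3$ and hold in $G$ since $\vec F\le G$ by Lemma~\ref{lem:vecFsubgroup}, while the relations in the $c_{2n}$ are exactly the instances $(n,k)\in\{(1,2),(1,3),(2,4)\}$ of~(\ref{rel2}), the instances $n\in\{0,1\}$ of~(\ref{rel3}), and the instance $n=0$ of~(\ref{rel0b}). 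Hence there is a surjection $G'\twoheadrightarrow G$, and the whole content is to show it is injective, i.e.\ that the remaining (infinitely many) relations of $G$ are consequences of these twelve.

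First I would eliminate the redundant generators. Relation~(\ref{rel0a}) with $k=1$ gives $g_n=g_1^{-1}g_{n-2}g_1$ for $n\ge 4$, so every $g_n$ with $n\ge 4$ is a word in $g_1,g_2,g_3$; and~(\ref{rel1}), rewritten as $c_{2n}=g_{2n-1}^{-1}c_{2n-2}$, expresses every $c_{2n}$ with $n\ge 1$ in terms of $c_0$ and the $g_k$. Adjoining these equalities and deleting the now–redundant generators $\{g_n:n\ge 4\}$ and $\{c_{2n}:n\ge 1\}$ is a legitimate sequence of Tietze moves, after which the only generators are $g_1,g_2,g_3,c_0$. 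Under this elimination the family~(\ref{rel0a}) collapses precisely to the finite presentation of $\vec F\cong F_3$ in the generators $g_1,g_2,g_3$, which by the finite presentability of $\vec F$ used in Lemma~\ref{lem:vecFsubgroup} is equivalent to the first six relations, while~(\ref{rel1}) becomes definitional and so drops out.

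It then remains to show that the three families~(\ref{rel0b}), (\ref{rel2}), (\ref{rel3}) follow from the retained instances, which I would do by induction on $n$ with the help of Corollary~\ref{cor:extrarel}. The ``interior'' cases of~(\ref{rel2}), say $2\le k\le 2n-1$, pass from level $n-1$ to level $n$ cleanly: substituting $c_{2n}=g_{2n-1}^{-1}c_{2n-2}$ and using the inductive instance of~(\ref{rel2}) reduces the required identity to $g_{k-1}^{-1}g_{2n-1}g_{k-1}=g_{2n+1}$, which is an instance of~(\ref{rel0a}). Relation~(\ref{rel3}) at level $n$ then follows from level $n-1$ by $c_{2n}g_1=g_{2n-1}^{-1}c_{2n-2}g_1=g_{2n-1}^{-1}c_{2n}^3=c_{2n+2}^3$, using~(\ref{rel3}) at level $n-1$ and~(\ref{rel4}) with $m=3$; and the torsion relation~(\ref{rel0b}) reduces to its base case $c_0^2=1$ via $c_{2n}^{2n+2}=c_{2n}^{2n-1}c_{2n}^{3}=g_1^{-1}c_{2n-2}^{2n-1}\,c_{2n-2}g_1=g_1^{-1}c_{2n-2}^{2n}g_1=1$, using~(\ref{rel4}) with $m=2n-1$ and~(\ref{rel3}).

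The main obstacle is that the ``boundary'' cases of~(\ref{rel2}), where $k\in\{2n,2n+1\}$ and only the single seed $(2,4)$ is retained, are entangled with Corollary~\ref{cor:extrarel}: the derivation of~(\ref{rel4}) and~(\ref{rel5}) itself uses~(\ref{rel2}) at $k=2n+1$, so a naive level-by-level induction is circular. Resolving this is the crux, and I would handle it by proving the whole package~(\ref{rel1})--(\ref{rel5}) at each level $n$ simultaneously, in a fixed internal order that produces each boundary instance of~(\ref{rel2}) before it is needed in the corresponding instance of~(\ref{rel4}); the required index arithmetic is exactly the content of~(\ref{T1}) and~(\ref{rel0a}) already recorded. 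Once all three families are shown to be consequences of the twelve retained relations, the surjection $G'\to\vec T$ has trivial kernel and is therefore an isomorphism, which gives the stated finite presentation.
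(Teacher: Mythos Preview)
Your overall framework---Tietze elimination of the redundant generators, reduction of~(\ref{rel0a}) to the finite $F_3$ presentation, and then inductive derivation of the families~(\ref{rel0b}), (\ref{rel2}), (\ref{rel3})---matches the paper exactly. You also correctly locate the only nontrivial point: the boundary cases $k\in\{2n,2n+1\}$ of~(\ref{rel2}). But your resolution of that point is where your proposal remains a sketch, and it is precisely where the paper's argument differs from yours.

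Your inductions on~(\ref{rel3}) and~(\ref{rel0b}) both rely on instances of~(\ref{rel4}) at the current level $n$ (with $m=3$ and $m=2n-1$ respectively), and those instances unwind into the boundary cases $k=2n,2n+1$ of~(\ref{rel2}) that you have not yet proved. Your proposal to ``prove the whole package simultaneously in a fixed internal order'' does not actually exhibit such an order, and it is not obvious one exists: the boundary case $k=2n$ at level $n$, when you try to push it down via $c_{2n}=g_{2n-1}^{-1}c_{2n-2}$, lands on $c_{2n-2}g_{2n}$, which is outside the range of~(\ref{rel2}) at level $n-1$ altogether.

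The paper sidesteps this by changing the induction scheme. Instead of inducting on $n$ with $k$ free, it first fixes $k\in\{2,3,4\}$ and inducts on $n$ (the three seeds $(1,2),(1,3),(2,4)$ are exactly the needed base cases, and the step uses only $c_{2n}=g_{2n-1}^{-1}c_{2n-2}$ and~(\ref{rel0a})). This gives~(\ref{rel2}) for $k=2,3,4$ and \emph{all} $n$. Then for $k\ge 5$ it inducts on $k$, using the now-global identity $c_{2n}=g_1^{-1}c_{2n-2}g_2$ to write $c_{2n}g_k=g_1^{-1}c_{2n-2}g_2g_k=g_1^{-1}c_{2n-2}g_{k-2}g_2$ and reduce to $k-2$; the boundary values $k=2n,2n+1$ are just the top of this $k$-induction and require nothing special. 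With~(\ref{rel2}) fully in hand for all $n$ and $k$, the derivations of~(\ref{rel3}) and~(\ref{rel0b}) then use only the small-$k$ instances and are free of circularity. This reorganisation---induct on $n$ for the small $k$'s first, then on $k$---is the missing idea in your sketch.
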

\begin{proof}
Thorem \ref{vecTgenerators} proves that the choice of generators is correct, so it remains to prove that the set of relations given in Lemma \ref{lem:basic relation} follow, since these are shown to be the defining relations of $\vec{T}$ in Theorem \ref{thm:presentation}. We will refer to these relations by their numbering given in Lemma \ref{lem:basic relation}.  

The first six relations imply (\ref{rel0a}), since these are the relations for $F_3$ with generators $g_1, g_2,$ and $g_3$ given in \cite{gs97}.

By definition of $c_{2n}$ in this corollary, (\ref{rel1}) is trivial.  To prove (\ref{rel2}), we do induction on both $n$ for fixed base values of $k$, and then induct on $k$.  For $k = 2$ and $n=1$, $k = 3$ and $n = 1$, and $k = 4$ and $n = 2$, the corresponding relations are given in the finite presentation.  For these fixed values of $k$, we induct on $n$ then in the following way, referring to the induction step with abbreviation ind:

\begin{align*}
    c_{2n}g_k &\overset{(\ref{rel1})}= g_{2n-1}^{-1}c_{2n-2}g_k\\
    &\overset{\text{ind}}{=} g_{2n-1}^{-1}g_1c_{2n}\\
    &\overset{(\ref{rel0a})}=g_1g_{2n+1}^{-1}c_{2n}\\
    &\overset{(\ref{rel1})}=g_1c_{2n+2}
\end{align*}

For the remaining cases, we induct on $k \geq 5$, assuming that for smaller values of $k$ and all values of $n$ that (\ref{rel2}) holds.  We will refer to using (\ref{rel2}) when $k = 2$ as the base case, or base for short, and smaller values of $k$ by ind, standing for proof by induction.

\begin{align*}
    c_{2n}g_k &\overset{\text{base}}= g_1^{-1}c_{2n-2}g_2g_k\\
    &\overset{(\ref{rel0a})}= g_1^{-1}c_{2n-2}g_{k-2}g_2\\
    &\overset{\text{ind}}= g_{1}^{-1}g_{k-3}c_{2n}g_2\\
    &= g_{1}^{-1}g_{k-3}g_1g_1^{-1}c_{2n}g_2\\
    &\overset{(\ref{rel0a})}= g_{k-1}g_1^{-1}c_{2n}g_2\\
    &\overset{\text{base}}= g_{k-1}c_{2n+2}
\end{align*}

Next we prove that (\ref{rel3}) follows by induction on $n$, noting that the base cases of induction $n = 0$ and $n = 1$ are assumed in the finite relations.  Similar to before, ind will stand for induction on $n$.

\begin{align*}
    c_{2n}g_1 &\overset{(\ref{rel2})}= g_1^{-1}c_{2n-2}g_2g_1\\
    &\overset{(\ref{rel0a})}=g_1^{-1}c_{2n-2}g_1g_4\\
    &\overset{\text{ind}}=g_1^{-1}c_{2n}^3g_4\\
    &\overset{(\ref{rel2})}=g_1^{-1}g_1c_{2n+2}^3\\
    &= c_{2n+2}^3
\end{align*}

We can now prove (\ref{rel0b}), the final set of relations.

\begin{align*}
    c_{2n}^{2n+2} &= c_{2n}c_{2n}^{2n}c_{2n}\\
    &\overset{(\ref{rel1})}=c_{2n}c_{2n}^{2n}g_{2n+1}c_{2n+2}\\
    &\overset{(\ref{rel2})}=c_{2n}g_{2n+1-2n}c_{2n+2}^{2n}c_{2n+2}\\
    &= c_{2n}g_1c_{2n+2}^{2n+1}\\
    &\overset{(\ref{rel3})}=c_{2n+2}^3c_{2n_2}^{2n+1}\\
    &=c_{2n+2}^{2n+4}
\end{align*}

Thus since $c_0^2$ is a relation, (\ref{rel0b}) follows by induction on $n$.
\end{proof}

To understand more about the structure of $\vec{T}$, we can use the observation in Lemma \ref{lem:properquotient} to show that every proper homomorphism of $\vec{T}$ factors through a certain homomorphism from $\vec{T}$ to the infinite dihedral group.

\begin{cor}
Any proper homomorphism of $\vec{T}$ factors through the homomorphism $\alpha$ from $\vec{T}$ to the infinite dihedral group, $\gen{c_0,g_1 | c_0^2 = 1, g_1c_0g_1c_0 = 1}$, where $\alpha(c_{2n}) = c_0$, $\alpha(g_{2n+1}) = g_1$, and $\alpha(g_{2n+2}) = 1$ for all $n \geq 0$.
\end{cor}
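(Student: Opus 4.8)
The plan is to reduce, via Lemma \ref{lem:properquotient}, to the abelian quotients of $\vec{F}\cong F_3$, and then to squeeze all remaining generator images using the cross relations (\ref{rel1})--(\ref{rel3}). So let $\psi\colon \vec{T}\to Q$ be an arbitrary proper homomorphism. By Lemma \ref{lem:properquotient}, its restriction to $\vec{F}$ is a proper quotient of $\vec{F}\cong F_3$, and it is a standard fact that every proper quotient of $F_3$ is abelian (the commutator subgroup being simple). Hence $\psi$ kills $[\vec{F},\vec{F}]$ and factors through $\vec{F}^{\mathrm{ab}}\cong\Z^3$ on $\vec{F}$. Abelianizing relation (\ref{rel0a}) gives $g_n\equiv g_{n+2}$ for $n\geq2$, so $\psi(g_n)$ depends only on the parity of $n$ once $n\geq2$: set $A=\psi(g_{2m})$ for $m\geq1$ and $B=\psi(g_{2m+1})$ for $m\geq1$, with $\psi(g_1)$ a priori separate, all three elements pairwise commuting.

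The heart of the argument is to show $A=1$ and $B=\psi(g_1)$, which forces the even-indexed generators to die and all odd-indexed generators to share a single image. Applying $\psi$ to (\ref{rel1}), which for $n\geq1$ reads $c_{2n}=g_{2n+1}c_{2n+2}$, yields $\psi(c_{2n})=B\,\psi(c_{2n+2})$. Applying $\psi$ to (\ref{rel2}) with $k=4$ (legitimate once $4<2n+2$, i.e.\ $n\geq2$) gives $\psi(c_{2n})A=B\,\psi(c_{2n+2})=\psi(c_{2n})$, so cancellation forces $A=1$. With $A=1$, relation (\ref{rel2}) at $k=2$ gives $\psi(c_{2n})=\psi(g_1)\psi(c_{2n+2})$, and comparing with $\psi(c_{2n})=B\,\psi(c_{2n+2})$ forces $\psi(g_1)=B$. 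Thus every odd-indexed generator maps to a single element $g:=\psi(g_1)$ and every even-indexed generator maps to $1$.

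It remains to recover the dihedral relations and assemble the factorization. Relation (\ref{rel2}) at $k=3$ (for $n\geq1$) becomes $\psi(c_{2n})\,g=\psi(c_{2n+2})$, and combined with $\psi(c_{2n+2})=g^{-1}\psi(c_{2n})$ (from (\ref{rel1})) this gives $\psi(c_{2n})\,g\,\psi(c_{2n})^{-1}=g^{-1}$; transporting this down through $\psi(c_0)=g\,\psi(c_2)$ shows $\psi(c_0)\,g\,\psi(c_0)^{-1}=g^{-1}$ as well, while (\ref{rel0b}) at $n=0$ gives $\psi(c_0)^2=1$. The recursion $\psi(c_{2n+2})=g^{-1}\psi(c_{2n})$ then yields $\psi(c_{2n})=g^{-n}\psi(c_0)$, so the image is generated by $\psi(c_0)$ and $g$ subject only to $\psi(c_0)^2=1$ and $\psi(c_0)g\psi(c_0)=g^{-1}$; that is, $Q$ is a quotient of the infinite dihedral group $\gen{c_0,g_1\mid c_0^2=1,\,g_1c_0g_1c_0=1}$. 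This is exactly the asserted factorization through $\alpha$, where $\alpha(g_{2m+1})=g_1$, $\alpha(g_{2m})=1$, and $\alpha(c_{2n})=g_1^{-n}c_0$ (the reflection, rather than $c_0$ itself). The main obstacle is the external input that proper quotients of $F_3$ are abelian, together with the index bookkeeping in the cross relations---in particular, the conjugation-by-$c_0$ relation only emerges from (\ref{rel2}) for $n\geq1$ and must be carried back to $c_0$ using (\ref{rel1}).
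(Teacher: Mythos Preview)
Your argument is correct and follows the same overall strategy as the paper: invoke Lemma \ref{lem:properquotient} to abelianize $\vec{F}$, then use the cross relations to force $\psi(g_{2m})=1$, $\psi(g_{2m+1})=\psi(g_1)$, and the dihedral relations on $\psi(c_0),\psi(g_1)$. The specific relation-chasing differs slightly---the paper computes $\phi(c_4g_4)$ two ways to get $\phi(g_1)=\phi(g_3)$ and uses $\phi(c_4)=\phi(c_4g_2)$ to kill $\phi(g_2)$, whereas you extract both directly from (\ref{rel2}) at $k=4$ and $k=2$; the paper also separately establishes $\phi(c_{2n}^2)=1$ for all $n$, which you bypass by deriving the conjugation relation for $c_{2n}$, $n\geq1$, and carrying it down to $c_0$ via (\ref{rel1}). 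Both routes are equally short.

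Two remarks. First, you are right that the formula $\alpha(c_{2n})=c_0$ in the statement is inconsistent with (\ref{rel1}): if $\alpha(c_{2n})=\alpha(c_{2n+2})=c_0$ and $\alpha(g_{2n+1})=g_1$, then (\ref{rel1}) would force $g_1=1$ in $D_\infty$. Your corrected value $\alpha(c_{2n})=g_1^{-n}c_0$ is what the recursion actually gives. Second, to conclude that $\psi$ factors through $\alpha$ (rather than merely through some quotient of $D_\infty$), one still needs to know that $\alpha$ is a well-defined homomorphism $\vec T\to D_\infty$; the paper closes this loop by running through the finite presentation of Corollary \ref{cor:finitepresentation} and checking each relation holds under $\alpha$. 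Your argument leaves this implicit, but it is a routine verification (and with your corrected formula for $\alpha(c_{2n})$ all five relation families (\ref{rel0a})--(\ref{rel3}) are easily seen to hold in $D_\infty$).
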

\begin{proof}
Let $\phi$ be a proper homomorphism of $\vec{T}$.  Then by Lemma \ref{lem:properquotient}, $\phi$ restricted to $\vec{F}$ is also proper, and hence the image of $\vec{F}$ under $\phi$ is abelian \cite[Theorem 4.13]{b87}.  Therefore, since $g_k^{-1}g_ng_k = g_{n+2}$ in $\vec{T}$, $\phi(g_n) = \phi(g_{n+2})$, where $n > 1$. In all the equalities that follow, we will use $\phi(g_n) = \phi(g_{n+2})$ extensively, along with all other equalities that we prove hold true in the image of $\phi$. 

We also have 
   \begin{align*}
       \phi(c_4g_4)&\overset{(\ref{rel2})}= \phi(g_1^{-1}c_2g_2g_4) = \phi(g_1^{-1}c_2g_2^2)\\
       \phi(c_4g_4)&\overset{(\ref{rel2})}= \phi(g_3c_6) \overset{(\ref{rel2})}= \phi(g_3g_1^{-2}c_2g_2^2)
   \end{align*}
      As a result, $\phi(g_3g_1^{-1}) = 1$, so $\phi(g_3) = \phi(g_1)$, extending $\phi(g_n) = \phi(g_{n+2})$ to $n = 1$.  
   
   We also have
   \begin{align*}
       \phi(c_4) &\overset{(\ref{rel1})}= \phi(g_5c_6) = \phi(g_3c_6) \overset{(\ref{rel2})}= \phi(g_3g_1^{-1}c_4g_2) = \phi(c_4g_2)
   \end{align*}
   This shows that $\phi(g_2) = 1$, and hence $\phi(g_{2n}) = 1$. 
    
    Next we show that $\phi(c_{2n+2}^2) = 1$ for any $n \geq 1$, noting that if $n=0$ we already have $c_0^2 = 1$.
    \begin{align*}
        \phi(c_{2n}g_1) &= \phi(c_{2n}g_3) \overset{(\ref{rel2})}= \phi(g_{2}c_{2n+2}) = \phi(c_{2n+2})\\
        \phi(c_{2n}g_1) &\overset{(\ref{rel3})}= \phi(c_{2n+2}^3)
    \end{align*}
    To extend this to $\phi(c_2^2)=1$, we use that $\phi(c_{2}) \overset{(\ref{rel1})}= \phi(g_{3}c_4) = \phi(g_1c_4)$, hence $\phi(g_1) = \phi(c_2c_4^{-1}) = \phi(c_2c_4)$. Now $\phi(c_2g_1) =\phi(c_2g_3) \overset{(\ref{rel2})}= \phi(g_2c_4) = \phi(c_4)$, so $\phi(g_1) = \phi(c_2^{-1}c_4)$.  Combining these gives $\phi(c_2c_4) = \phi(c_2^{-1}c_4)$, so $\phi(c_2^2) = 1$ as well, showing that $\phi(c_{2n}^2) = 1$ for any $n \geq 0$.  
    
    Thus $\phi(g_1)$ and $\phi(c_0)$ generate the image of $\phi$, and we can use the relations shown to hold in $\phi(\vec{T})$ and the finite presentation given in Corollary \ref{cor:finitepresentation} to verify that the relations in the infinite dihedral group hold, proving the corollary. We will call the generators $\phi(g_1)$ and $\phi(c_0)$ as simply $g_1$ and $c_0$.  The first six relations in the finite presentation are all trivial.  Since $\phi(c_2^2) = 1$, the relation $c_0g_1 = c_2^3$ becomes $c_0g_1 = c_2$.  This follows naturally from the definition of $c_2 = g_1^{-1}c_0$ by inverting both sides, since $c_0$ and $c_2$ are their own inverses.  Likewise, since $\phi(g_2) = 1$, $c_2g_3 = g_2c_4$ becomes $c_4 = c_2g_1 = c_0g_1g_1$, which follows from the definition $c_4 = g_3^{-1}c_2 = g_1^{-1}c_2$ in the same way.  The relation $c_2g_2 = g_1c_4$ then becomes $c_0g_1 = g_1c_0g_1g_1$, which simplifies to $c_0 = g_1c_0g_1$, which can be rewritten as one of the two relations in the presentation of the infinite dihedral group: $g_1c_0g_1c_0 = 1$.  The relation $c_0^2 = 1$ remains unchanged and is the other relation in the presentation of the infinite dihedral group.  The remaining relations are easily verified to be unnecessary.
\end{proof}

\section{The Stallings 2-core of a subgroup of \texorpdfstring{$T$}{T} and an annular diagram group presentation for \texorpdfstring{$\vec{T}$}{Jones' subgroup of T}} \label{stallingscore}
\subsection{The Stallings 2-core}
In his original paper \cite{stallings}, Stallings introduced a notion of foldings of a graph, which gave an algorithm for solving the membership problem for finitely generated subgroups of free groups.  Specifically, given a set of generators of a free group, each can be drawn as a loop with edges labeled by the generators, and then adjacent edges with identical labels can be ``folded together" (i.e., identified with each other).  Eventually this process will finish and reveal the free generators of the subgroup.  Then any element of the subgroup can be read in this graph called a Stallings core, and in this case the core is said to accept the element.

In \cite{gs97}, Guba and Sapir extended the notion of Stallings foldings of edges to foldings of cells and created the notion of a Stallings 2-core corresponding to a finitely generated subgroup of a diagram group.  In general, solving the membership problem is much more difficult in this class of groups, and sometimes even impossible, but what they showed is that the constructed 2-core accepts at least the subgroup, and possibly more.  Thus the Stallings 2-core provides one way to prove that a subgroup of a diagram group is a strict subgroup, by simply checking whether the 2-core accepts the generators of the group.

For example, $F$ is a diagram group \cite{gs97}, and hence each subgroup $H$ has a corresponding 2-core. Let the subset of $F$ accepted the 2-core of $H$ be denoted $C(H)$.  Then $C(H)$ is a subgroup, and if $f \in C(H)$ and $f$ fixes a dyadic rational number $\alpha \in (0,1)$, then the following functions are called components of $f$:
\[ f_1(t) = 
    \begin{cases} 
      f(t) & t \in[0,\alpha] \\
      t & t \in (\alpha,1] \\
   \end{cases}
   \hspace{0.3in} f_2(t) =
   \begin{cases}
      t & t \in[0,\alpha] \\
      f(t) & t \in (\alpha,1]
   \end{cases}.
\]
Then the core must then accept $f_1$ and $f_2$, hence $C(H)$ must be closed under adding components in this way.  However, it was proved in \cite{g16} that in the case of $F$, $C(H)$ is exactly the smallest subgroup containing $H$ and closed under adding components of elements of $H$, thus classifying the elements accepted by the core completely. In this section, we extend the notion of a 2-core to $T$ by using the commonalities between pair of trees representations of elements of $F$ and $T$.  

Given an element of $T$ with a pair of trees representation $(R, S, n)$, direct all the edges in $R$ and $S$ away from the root of the corresponding tree.  In this way, there is a unique directed path from the root to any vertex in each tree.  Moreover, once the leaves of the two trees are identified, any leaf has exactly two directed paths, called the positive and negative paths corresponding to the input and output trees respectively.  This directed graph associated to an element of $T$ will be called the element's diagram, and by labeling left edges with $0$ and right edges with $1$, it will still be possible to distinguish between left and right edges in the diagram.  The diagram is called reduced if it comes from a reduced pair of trees representation.

\begin{defn}
The core of a finitely generated subgroup $H = \gen{x_1,\ldots,x_n} \leq T$ is a labeled directed graph constructed in the following way. First, begin with all the reduced diagrams for $x_1,\ldots,x_n$ and identify all the roots of trees in these diagrams together.  Proceed by doing the following steps as many times as possible:
\begin{enumerate}
    \item If two vertices are identified, identify their left children and left edges, along with their right children and right edges.
    \item If two vertices have their left children and their right children identified respectively, then identify the vertices.
\end{enumerate}
This process will stop since there are only finitely many edges and vertices.
\end{defn}

Now, an element $x \in T$ is said to be accepted by a core if there is a homomorphism $\phi$ of labeled directed graphs from the reduced diagram of $x$ to the core, where a graph homomorphism is a map such that vertices are sent to vertices, edges are sent to edges of the same label, and if $(u,v)$ are the vertices of a directed edge, then so are $(\phi(u),\phi(v))$.  Checking whether such a homomorphism exists is done by first mapping the roots of the reduced diagram of $x$ to the root of the core, and then identifying edges and vertices from the diagram of $x$ with edges and vertices of the core using the same two steps outlined above repeatedly.  Either a natural graph homomorphism to the core will arise from the identifications, or a contradiction (e.g., a vertex of in the diagram of $x$ being identified with two different vertices in the core, or a vertex in $x$ which has children is identified with a vertex in the core that has no children) is reached and the process cannot continue, implying that no such homomorphism exists.  An example of computing the core of $F = \gen{x_0,x_1}$ is given in Figure \ref{excore}.

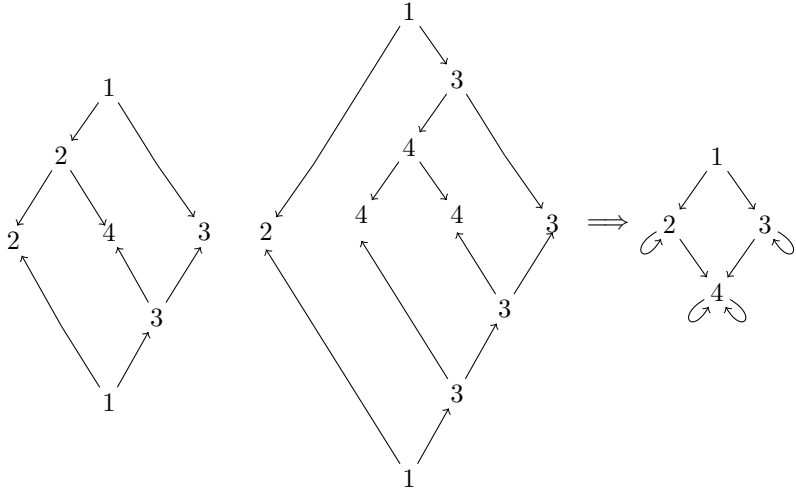
\begin{figure}[ht]\centering
\begin{tikzpicture}  [
    grow                    = down,
    level distance          = 0.4in,
    sibling distance        = 0.5in,
    anchor                  = base,
    baseline                = 0.4in,
    ]
    \node [dummy] (root) {1}
    child[->] { node [dummy] {2}
      child[->] {node [dummy] {}}
      child[->] {node [dummy] {}} 
     }
    child { 
      child {edge from parent [draw=none]}
      child[->] {node [dummy] {3}}
    };
    \node [dummy,below=1.57in](broot) at (root) {1} [grow'=up]
    child {
      child[->] {node [dummy] {2}}
      child {edge from parent [draw=none]}
    }
    child[->] { node [dummy] {3}
      child[->] {node [dummy] {4}}
      child[->] {node [dummy] {}}
    };
\end{tikzpicture}
\hspace{0.1in}
\begin{tikzpicture}  [
    grow                    = down,
    level distance          = 0.4in,
    sibling distance        = 0.5in,
    anchor                  = base,
    baseline                = 0in,
    ]
    \node [dummy] (root) {1}
      child {
        child {
          child[->] { node [dummy] {2}}
          child {edge from parent [draw=none]}
        }
        child[->] {node [dummy]{} edge from parent [draw=none]}
      }
      child[->] {node [dummy] {3}
        child[->] {node [dummy] {4}
          child[->] {node [dummy] {4}}
          child[->] {node [dummy] {4}}
         }
        child[-] {
          child{edge from parent [draw=none]}
          child[->] {node [dummy] {3}}
        }
      };
    \node [dummy,below=2.37in](broot) at (root) {1} [grow'=up]
    child {
      child {
        child[->] {}
        child {edge from parent [draw=none]}
      }
      child {edge from parent [draw=none]}
    }
    child[->] { node [dummy] {3}    
      child[-] {
        child[->] 
        child {edge from parent [draw=none]}
      }
      child[->] { node [dummy] {3}
          child[->] 
          child[->]
      }
    };
\end{tikzpicture}
\begin{tikzpicture} [
    grow                    = down,
    level distance          = 0.4in,
    sibling distance        = 0.5in,
    anchor                  = base,
    baseline                = 0.76in,
    ]
    \node [dummy] {1}
     child[->]{node(2) [dummy] {2}
         child{edge from parent [draw=none]}
         child[->]{node [dummy] {\phantom{4}}}
     }
     child[->]{node(3) [dummy] {3} 
             child[->]{node [dummy](4) {4}}
             child{edge from parent [draw=none]}
     };
     \draw[->] (4) to [out=330,in=300,looseness=10] (4);
     \draw[->] (4) to [out=210,in=240,looseness=10] (4);
     \node [dummy,left=1em] at (2) {$\implies$};
     \draw[->] (2) to [out=210,in=240,looseness=10] (2);
     \draw[->] (3) to [out=330,in=300,looseness=10] (3);
\end{tikzpicture}
\caption{Depicted are the identified pairs of trees for $x_0$ on the left and $x_1$ in the middle.  To create the core, first identify all roots, where identifications are labeled by numbers. Thus all roots are first labeled by 1.  Next identify all left children of vertex 1 as vertex 2, and identify all right children of vertex 1 as vertex 3.  Notice that in one instance, vertex $2$ has left child $2$ and vertex $3$ has right child $3$, so update all corresponding children of vertices labeled $2$ and $3$ accordingly.  Then the right child of vertex $2$ needs a label, so label it vertex $4$.  Then $4$ is also in one instance the left child of $3$, so update the core once again.  Finally, all possible identifications and labels have been completed, so the core is complete, and it has $4$ vertices and is shown on the right.}\label{excore}
\end{figure}

Let the subset of $T$ accepted by the core of a finitely generated subgroup $H$ be denoted by $C(H)$.  Then $C(H)$ is also a subgroup of $T$ that contains $H$, although it may be larger.

\begin{lem} 
$C(H)$ is a subgroup that contains $H$.
\end{lem}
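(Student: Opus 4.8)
The plan is to extract a clean combinatorial description of the acceptance condition from the two folding rules, and then verify the subgroup axioms directly against it. First I would record the structural consequence of folding: rule (1) guarantees that in the finished core every vertex has at most one outgoing left edge and at most one outgoing right edge, so the core is \emph{deterministic} when read downward from the root. Consequently, if $\phi$ is any graph homomorphism from the reduced diagram of an element $x \in T$ to the core sending the roots of the diagram to the root of the core, then $\phi$ is forced: a vertex occurring at binary address $w$ (the word of $0$'s and $1$'s spelling the directed path from the root, left $=0$, right $=1$) must be sent to the unique core vertex $\kappa(w)$ reached by following $w$, whenever it exists. Thus $x=(R,S,k)$ is accepted \emph{if and only if} $\kappa$ is defined on every leaf address of $R$ and of $S$, and $\kappa(u)=\kappa(v)$ whenever a leaf of $R$ at address $u$ is identified with a leaf of $S$ at address $v$. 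This reformulation makes the symmetric and compositional behaviour transparent.

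Containment $H\subseteq C(H)$ and closure under inverses are then short. By construction the core is a quotient of the disjoint union of the reduced diagrams of $x_1,\dots,x_n$ with all roots identified, built from the identifications in rules (1) and (2); the quotient map restricts on each $D_{x_i}$ to a label- and direction-preserving graph homomorphism sending its roots to the root, so every generator, and trivially the identity, lies in $C(H)$. For inverses, $x^{-1}$ is represented by the pair of trees $(S,R,k')$ with the leaf identification reversed; the criterion above refers only to the \emph{set} of identified address pairs together with the condition $\kappa(u)=\kappa(v)$, and is therefore symmetric under swapping $R$ and $S$, so $x\in C(H)$ forces $x^{-1}\in C(H)$.

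The real content is closure under products, and I expect this to be the main obstacle. Given accepted $x=(R_1,S_1,k_1)$ and $y=(R_2,S_2,k_2)$, I would form the product by adding carets until the output tree $S_1$ and the input tree $R_2$ become a common refinement $W$, stacking, and removing the resulting dipoles to reach the reduced diagram of $xy$. The one thing to check is that passing to $W$ does not leave the core. When a leaf of $S_1$ at address $v$ is subdivided in $W$, the subdivision is dictated by structure already present in $R_2$ at the same dyadic address; since $y$ is accepted, $\kappa$ is defined on the addresses below that position, which forces $\kappa(v)$ to possess the relevant children in the core, so the refined representation of $x$ remains accepted, and symmetrically for $y$. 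Crucially, on the shared tree $W$ the two homomorphisms agree automatically, because determinism pins both of them to $\kappa$, so there is no interface obstruction to reconcile. Finally, removing a dipole only collapses a pair of addresses $w0,w1$ back to $w$, and $\kappa(w)$ is defined whenever $\kappa(w0)$ is, so acceptance survives reduction; tracking the rotation index through $k=k_1+k_2-1$ modulo the number of leaves is routine bookkeeping that never interacts with the condition $\kappa(u)=\kappa(v)$. Assembling these points shows $xy$ satisfies the reformulated criterion, so $C(H)$ is closed under multiplication and is therefore a subgroup of $T$ containing $H$.
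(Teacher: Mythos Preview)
Your reformulation via the deterministic address map $\kappa$ is sound, and the refinement step is handled correctly: when a leaf of $S_1$ at address $v$ must be subdivided because $v$ is internal in $R_2$, the children $\kappa(v0),\kappa(v1)$ exist in the core (witnessed by $R_2$), and determinism transports them across to the matching leaf $u$ of $R_1$ since $\kappa(u)=\kappa(v)$. The paper's argument is terser and slightly different in shape: it simply stacks the two accepted diagrams, gluing the homomorphisms at the shared root, and then removes dipoles.

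There is, however, a real gap in your last sentence. Removing a dipole from the tree pair $(R_1',S_2')$ means deleting a caret at some address $a$ in $R_1'$ and a caret at a generally \emph{different} address $b$ in $S_2'$, after which the new leaves at $a$ and $b$ become identified. Your justification ``$\kappa(w)$ is defined whenever $\kappa(w0)$ is'' establishes only that $\kappa(a)$ and $\kappa(b)$ exist; it does not give $\kappa(a)=\kappa(b)$, which is exactly what acceptance of the reduced diagram demands. From determinism (rule~(1)) you know only $\kappa(a0)=\kappa(b0)$ and $\kappa(a1)=\kappa(b1)$; what forces the parents to coincide is rule~(2), which you never invoke. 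Without rule~(2) the implication genuinely fails: in a labelled graph that is deterministic but has two distinct vertices sharing the same pair of children, one can write down an unreduced diagram that is accepted while its reduction is not. The fix is immediate---observe that rule~(2) makes the core \emph{co-deterministic} (a pair of left/right children determines its parent uniquely), and then reduction preserves your criterion as claimed.
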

\begin{proof}
By construction, $C(H)$ contains the generators of $H$, so it suffices to prove that $C(H)$ is a subgroup.  
Suppose $x\in C(H)$.  It is clear that $x^{-1} \in C(H)$ since the diagrams for $x$ and $x^{-1}$ are isomorphic.

Suppose $x, y \in C(H)$.  Then the unreduced tree diagram corresponding to $xy$ is accepted by the core, since the only overlap between the two diagrams in the unreduced product diagram is the root, which is sent to the same place in the core.  Furthermore, if a dipole is removed from a diagram accepted by the core, then the vertices which are identified by removing the dipole were also identified in the core, so the core accepts the reduced diagram.  Thus $xy \in C(H)$.
\end{proof}

The most important aspect of the core of $H$ for this paper is that any element accepted by the core of $H$ is piecewise-dyadic $H$, and this is a direct corollary of the following two lemmas.  A function $f \in T$ is piecewise-dyadic $H$ if  
\[f(t) = \begin{cases} f_1(t) & t \in [0,\alpha_1] \\
    f_2(t) & t \in (\alpha_1,\alpha_2] \\
    \phantom{f_n} \vdots & \\
    f_n(t) & t \in (\alpha_{n-1},1] \\
    \end{cases}\]
where each $f_i$ is in $H$ and $\alpha_i$ is a dyadic rational.  Thus $f$, although it may not be in $H$ itself, is composed of pieces of functions from $H$.  For example, it is an observation that if $f\oplus g$ is in $C(H)$, then so is $1 \oplus g$, although the group generated by $f \oplus g$ need not contain $1 \oplus g$.

Golan proved this fact for the the case of $F$ \cite{g16}, but it is also true for $T$ with very little alteration.  The proofs presented here for Lemmas \ref{golanlem1} and \ref{golanlem2} are the result of discussion of the first author with Golan.

Since edges on trees and the core are labeled with $0$ for left and $1$ for right, paths and their labels will be referred to interchangeably.  For every leaf in a pair of trees diagram of $x \in T$, let $t_+, t_- \in \{0,1\}^*$ be the labels of the unique positive and negative paths to the leaf. Thus $x$ is associated with $t_+ \to t_-$, which has the following meaning.  Viewing $x$ as a function on binary fractions, $t_+\to t_-$ means that $x$ applied to a binary fraction beginning with $t_+$ replaces the prefix with $t_-$.

\begin{lem}\label{golanlem1} Let $H = \gen{x_1,\ldots,x_n} \leq T$, and let $u, v$ be a vertices of the diagrams for some $x_i, x_j$ respectively. If $u$ and $v$ are identified in the core of $H$, then there exists a non-negative integer $l_{u,v}$ such that for any directed path with label $t_u$ to $u$ in $x_i$ and any directed path with label $t_v$ to $v$ in $x_j$, as well as for any finite binary word $\alpha$ with $\abs{\alpha} \geq l_{u,v}$, there exists a not necessarily reduced diagram for some $g \in H$ containing $t_u\alpha \to t_v\alpha$.
\end{lem}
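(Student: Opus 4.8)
The plan is to induct on the sequence of identification steps (1) and (2) that place $u$ and $v$ into the same vertex of the core, building alongside each identification an explicit witnessing element $g \in H$ whose (unreduced) diagram contains $t_u\alpha \to t_v\alpha$. The bookkeeping rests on reading a diagram of $g$ as a collection of assertions ``$g$ replaces prefix $p$ by prefix $q$'', one for each leaf with positive path $p$ and negative path $q$. Such assertions admit exactly three moves: they compose under a product in $H$ (matching the output prefix of one factor with the input prefix of the next), they invert under $g\mapsto g^{-1}$, and they are preserved under appending a common suffix to $p$ and $q$, since one may refine the diagram by growing identical subtrees below the leaf in both the input and output trees. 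These are the only operations I will use to assemble witnesses.

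For the base cases, recall that the identifications begin by merging all the roots, at which the only path label is the empty word; there $t_u=t_v=\epsilon$ and $g=1$ realizes $\alpha\to\alpha$ after refinement, so $l_{u,v}=0$. The second base fact is structural rather than derived: a single leaf of $x_i$ is reached by its positive path $t_+$ and its negative path $t_-$, and $x_i$ itself realizes $t_+\to t_-$ at that leaf, whence $t_+\alpha\to t_-\alpha$ is realized by $x_i$ and $t_-\alpha\to t_+\alpha$ by $x_i^{-1}$, again with $l_{u,v}=0$. When the two chosen paths to a common vertex coincide (an internal vertex, or a leaf with $t_u=t_v$), the identity suffices.

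The inductive steps coming from step (1) and from the equivalence-relation closure are routine. If $u\approx v$ arises from step (1) applied to an earlier identification $u'\approx v'$, so that $u,v$ are the side-$s$ children of $u',v'$ for some $s\in\{0,1\}$, then $t_u=t_{u'}s$ and $t_v=t_{v'}s$, so $t_u\alpha\to t_v\alpha$ is literally $t_{u'}(s\alpha)\to t_{v'}(s\alpha)$; applying the hypothesis for $u',v'$ to the suffix $s\alpha$ yields the witness, and I set $l_{u,v}=\max(l_{u',v'}-1,0)$. Symmetry is handled by passing from $g$ to $g^{-1}$. For transitivity $u\approx w\approx v$, I choose any directed path $t_w$ to the intermediate vertex $w$ (one exists, since every vertex of a generator diagram is reached from a root), and compose the witnesses for $t_u\alpha\to t_w\alpha$ and $t_w\alpha\to t_v\alpha$; these fit together because they use the same $\alpha$ and hence the same intermediate word $t_w\alpha$, and I set $l_{u,v}=\max(l_{u,w},l_{w,v})$.

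The genuinely delicate step, and the one I expect to be the main obstacle, is step (2): from identifications of $\mathrm{child}_0(u)$ with $\mathrm{child}_0(v)$ and of $\mathrm{child}_1(u)$ with $\mathrm{child}_1(v)$ one must conclude the statement for $u,v$ themselves. The naive route would attempt to fuse the two child-witnesses into one element of $H$ realizing $t_u\to t_v$, which is in general impossible in a group. The purpose of allowing $\alpha$ to be arbitrarily long is precisely to avoid this: since $\abs{\alpha}\geq 1$ I may write $\alpha=s\beta$ with $s\in\{0,1\}$, and then $t_u\alpha\to t_v\alpha$ coincides with $t_{\mathrm{child}_s(u)}\beta\to t_{\mathrm{child}_s(v)}\beta$, which the inductive hypothesis for the side-$s$ children provides using a \emph{single} element depending only on the leading bit $s$. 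Setting $l_{u,v}=1+\max_{s}l_{\mathrm{child}_s(u),\mathrm{child}_s(v)}$ closes this case and completes the induction; the only remaining routine points are keeping track of which generator diagrams the various vertices inhabit and verifying that step (2) is applied only to non-leaf vertices, so that $\mathrm{child}_s(u)$ is well defined.
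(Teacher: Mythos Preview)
Your proof is correct and follows essentially the same induction on the core construction as the paper: base case at the roots, step~(1) handled by peeling off the last bit of the path, and step~(2) handled by peeling off the first bit of $\alpha$, with the same bounds $l_{u,v}=\max(l_{u',v'}-1,0)$ and $l_{u,v}=1+\max_s l_{\mathrm{child}_s}$. The only organizational difference is that the paper folds the ``leaf has two paths'' issue into a subcase of step~(1) (composing with $x_i^{\pm 1}$ when the child is a leaf), whereas you extract it as a separate base fact and invoke it via transitivity; your packaging is slightly more modular, and in fact makes symmetry and transitivity explicit where the paper leaves them implicit.
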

\begin{proof}
We proceed by considering the construction of the core.  At the beginning, only roots are identified, so the only paths to the roots have empty label, and hence for $l = 0$ and any $\alpha$ we can simply take the unreduced trivial diagram of depth $\abs{\alpha}$.

Now suppose that the result holds up to a certain point in the construction of the core, and then two vertices are identified in the core.  Since there are two ways to identify vertices, we have two cases to consider.

First, assume that $u$ and $v$ are the left or right children of two identified vertices, $w_u$ and $w_v$ from $x_i$ and $x_j$ respectively.  Since the cases are symmetric, assume they are both left children.  If neither $u$ nor $v$ are leaves, then they have unique paths in $x$ and $y$ that go through $w_u$ and $w_v$.  So there exists $l' = l_{w_u,w_v}$ such that if $\abs{\alpha} \geq l'$, then there exists $g \in H$ containing $t_{w_u}\alpha \to t_{w_v}\alpha$.  Thus if $\abs{\alpha} \geq l' - 1$, there exists $g \in H$ containing $t_u\alpha \to t_v\alpha$ since $t_u = t_{w_u}0$, and $t_v = t_{w_v}0$. Thus $l_{u,v} = \max\{l'-1,0\}$ works.

Suppose that $u$ is a leaf.  Then up to taking $x_i^{-1}$ instead of $x_i$, we may suppose that $w_u$ is in the negative part of the diagram of $x_i$, hence $x_i$ contains $t_{u,+} \to t_{w_u}0$ where $t_{u,+}$ is the unique positive path to $u$.  Using the same idea as before, for any $\abs{\alpha} \geq l_{w_u,w_v} - 1$, we can find a $g \in H$ such that $g$ contains $t_{w_u}0\alpha \to t_{w_v}0\alpha = t_v\alpha$.  Observe that $x_ig$ in an unreduced form contains $t_{u,+}\alpha \to t_{w_u}0\alpha$, hence we can choose $l_{u,v} = \max\{l_{w_u,w_v}-1,0\}$.  If $v$ or both $u$ and $v$ are leaves, the process is very similar.

For the second case, suppose that the children of $u$ and $v$ have already been identified, and hence $u$ and $v$ are being identified.  This time, as $u$ and $v$ have children, they are clearly not leaves.  Let $u_l, u_r, v_l, v_r$ be the left and right children of $u$ and $v$.  Then choose $l = \max\{l_{u_l,v_l},l_{u_r,v_r}\} + 1$.  Then if $\abs{\alpha} \geq l$, we can write $\alpha$ as $0\alpha'$ or $1\alpha'$.  Since both cases are similar, we will consider the former case.  Then $\abs{\alpha'} \geq l_{u_l,v_l}$, hence there exists $g \in H$ taking $t_{u_l}\alpha' \to t_{v_l}\alpha'$.  Since $t_{u_l}\alpha' = t_u\alpha$ and $t_{v_l}\alpha' = t_v\alpha$, we are done.
\end{proof}

This lemma about paths to vertices in diagrams of $H$ identified in the core generalizes to any two paths in the core with the same end point.

\begin{lem}\label{golanlem2}
Let $H = \gen{x_1,\ldots,x_n} \leq T$, and let $p_1$ and $p_2$ be the labels of directed paths in $C(H)$ from the root to the same vertex.  Then there exists a non-negative integer $l$ such that for every $\alpha \in \{0,1\}^*$ with $\abs{\alpha} \geq l$, there exists $g \in H$ containing $p_1\alpha \to p_2\alpha$.
\end{lem}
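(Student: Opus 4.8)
The plan is to reduce Lemma \ref{golanlem2} to Lemma \ref{golanlem1} by showing that every directed path in the core is ``realizable'' relative to a genuine path in one of the generators' diagrams, and then to bridge the two lifts using Lemma \ref{golanlem1} at the vertex where they meet.

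First I set up notation: say a pair of labels $p \to q$ is realizable if there is a non-negative integer $l$ so that for every $\alpha$ with $\abs{\alpha} \geq l$ some $g \in H$ contains $p\alpha \to q\alpha$. I record three elementary closure properties, each proved in a line. Symmetry: if $g$ contains $p\alpha \to q\alpha$ then $g^{-1}$ contains $q\alpha \to p\alpha$. Transitivity: if $g$ contains $p\alpha \to q\alpha$ and $h$ contains $q\alpha \to r\alpha$, then, using the left-to-right multiplication convention so that $gh$ applies $g$ then $h$, the element $gh$ contains $p\alpha \to r\alpha$, with threshold the maximum of the two. Suffix stability: if $p \to q$ is realizable with threshold $l$ and $s$ is any word, then $ps \to qs$ is realizable with threshold $\max\{0, l - \abs{s}\}$, since $ps\alpha = p(s\alpha)$. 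With these, realizability is an equivalence relation compatible with appending letters.

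The core of the argument is the following claim, proved by induction on $\abs{p}$: for every directed path $p$ from the root to a core vertex $w$, there is a generator diagram $D_i$, a vertex $u$ of $D_i$ identified with $w$ in the core, and a path $t_u$ to $u$ in $D_i$ with $p \to t_u$ realizable. The base case $p = \emptyset$ is trivial (take $u$ a root). For the inductive step write $p = p'b$ with $b \in \{0,1\}$; by induction $p' \to t_{u'}$ is realizable for some $u'$ identified with $w'$, the parent of $w$ along the last edge. If $u'$ has a $b$-child $u$ in $D_i$, then step (1) of the core construction forces $u$ to be identified with $w$, and appending $b$ gives $p \to t_{u'}b = t_u$ realizable. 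Otherwise $u'$ is a leaf of $D_i$; since the core edge from $w'$ to $w$ is the image of an edge in some diagram, there is another representative $u''$ of $w'$ in a diagram $D_j$ whose $b$-child $u''_b$ is identified with $w$. Because $u'$ and $u''$ are identified in the core, Lemma \ref{golanlem1} gives that $t_{u'} \to t_{u''}$ is realizable, and transitivity yields $p' \to t_{u''}$ realizable; appending $b$ finishes the step.

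Given the claim, Lemma \ref{golanlem2} follows at once: applying it to $p_1$ and $p_2$ produces vertices $u_1, u_2$, both identified with $w$, with $p_1 \to t_{u_1}$ and $p_2 \to t_{u_2}$ realizable; since $u_1$ and $u_2$ are identified, Lemma \ref{golanlem1} gives $t_{u_1} \to t_{u_2}$ realizable, and stringing these together using symmetry and transitivity gives $p_1 \to p_2$. The one genuinely delicate point, and the place where Lemma \ref{golanlem1} is indispensable, is the second case of the induction, where the path we are tracking dead-ends at a leaf of the current diagram while the core vertex still has a child: here we must ``switch diagrams'', and it is exactly the identification of the two representatives in the core that licenses the jump via Lemma \ref{golanlem1}. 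Everything else is bookkeeping of the thresholds, handled uniformly by the suffix-stability and transitivity observations.
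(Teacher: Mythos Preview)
Your proposal is correct and follows essentially the same approach as the paper: both arguments use Lemma \ref{golanlem1} at the points where a core path switches from one generator diagram to another, and then compose the resulting elements of $H$. The only organizational difference is that the paper decomposes each of $p_1$ and $p_2$ into maximal within-diagram pieces and inducts on the total number of pieces $k+l$, whereas you induct on the path length and package the bookkeeping via your ``realizability'' equivalence relation; the key idea and the use of Lemma \ref{golanlem1} are identical.
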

\begin{proof}
Observe that any path $p$ in the core can be decomposed into $p_1,\ldots,p_k$ where each $p_i$ is the label of some path (not necessarily from the root) in some $x_j$, and the beginning vertex of $p_{i+1}$ has been identified with the end vertex of $p_i$, and $p_1$ begins at the root and $p_k$ ends at the final vertex of $p$.  

Thus let $p_1 = p_1,\ldots,p_k$ and $p_2 = q_1,\ldots,q_l$ be such decompositions.  We proceed by induction on $k+l$, with the base case of $k+l = 2$ being the previous proposition.  Now suppose the proposition holds if $m = k+l$, and up to taking the inverse of the element $g$ that we find, we may assume that $k \geq 2$.  Let $p_1 \in x_i$ and $p_2 \in x_j$, with $u$ the last vertex of $p_1$ and $v$ the first vertex of $p_2$.  Since $v$ is on a directed path in $x_j$, it is not a leaf, so there exists a unique path $p_1'$ in $x_j$ to $v$.  Now, since $p_1$ and $p_1'$ satisfy the conditions of the previous proposition, there exists $l'$ such that for any $\abs{\alpha'} \geq l$, there exists $g' \in H$ containing $t_u\alpha' \to t_v \alpha'$.  Thus if $\abs{\alpha} \geq l - \abs{p_2\ldots p_k}$, then $g'$ contains $t_up_2\ldots p_k \alpha \to t_vp_2\ldots p_k\alpha$.  Since $p_1'$ and $p_2$ are in the same tree, by induction there is a $g \in H$ containing $t_vp_2\ldots p_k\alpha \to q_1\ldots q_l \alpha$ if $\abs{\alpha} \geq l''$ for some $l''$.  Up to originally taking a larger quantity, we may assume $l' \geq l''$, and hence the element $g'g$ contains $p_1\alpha \to p_2 \alpha$.
\end{proof}

In particular, this last proposition directly implies that every element of $C(H)$ is piecewise-dyadic $H$.  Notice that this implies that if $H \leq F$, then any element in $C(H)$ is piecewise-dyadic $H$, and since every element of $H$ stabilizes $0$, so does every element in $C(H)$.  Thus $C(H) \leq F$ and this extension of the Stallings 2-core to $T$ coincides with its definition in $F$.

\subsection{\texorpdfstring{$\vec{T}$}{Jones' subgroup of T} as an annular diagram group}

\begin{prop}
If $H < T$ is such that $H = C(H)$, then $H$ is an annular diagram group.
\end{prop}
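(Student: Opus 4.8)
The plan is to read a semigroup presentation directly off the finite labeled graph $C(H)$ and then identify the associated annular diagram group with $H$. Let $X$ be the set of vertex labels occurring in the core $C(H)$, let $u$ be the label of the root, and let $R$ consist of one relation $a = bc$ for each non-leaf vertex $a$ of the core whose left child is labeled $b$ and whose right child is labeled $c$. Write $\P_H = \gen{X \mid R}$. I claim that $\mathcal{D}^a(\P_H, u) \iso H$, which exhibits $H$ as an annular diagram group. The entire argument runs through the label-forgetting map, so I would set that up first.

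Recall that $T = \mathcal{D}^a(\gen{x \mid x = x^2}, x)$, and that under the correspondence of Section \ref{Tannulardiagramgroup} a cell of an annular diagram is exactly a caret: its one-edge path is the parent and its two-edge path gives the two children. Thus replacing every generator in $X$ by $x$ and every relation $a = bc$ by $x = x^2$ sends an annular diagram over $\P_H$ to an annular diagram over $\gen{x \mid x = x^2}$, i.e. to an element of $T$; equivalently it forgets the $X$-labels of an edge-labeled diagram, leaving the underlying pair of trees with identified roots. This defines a map $\Phi : \mathcal{D}^a(\P_H, u) \to T$, and it is a homomorphism because multiplication on both sides is stacking followed by removing dipoles, and forgetting labels commutes with both operations. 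The image of $\Phi$ is then easy to pin down: an $X$-labeling of a reduced pair-of-trees diagram that is compatible with $R$ at every cell is precisely a graph homomorphism of labeled directed graphs from that diagram into $C(H)$, since the availability of the relation $a=bc$ is exactly the condition ``$a$ has children $b,c$'' in the core. Hence an element of $T$ lies in the image of $\Phi$ if and only if its reduced diagram is accepted by the core, which by hypothesis means it lies in $C(H) = H$, so $\Phi(\mathcal{D}^a(\P_H,u)) = H$.

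Next I would prove $\Phi$ is injective. A graph homomorphism into $C(H)$ must send the root to the root and is then forced along every directed path, since the left/right ($0/1$) labels determine the image of each successive vertex; therefore the $X$-labeling of a given reduced pair of trees is unique whenever it exists. It remains to check that $\Phi$ matches up reduced diagrams on the two sides, i.e. that an annular diagram over $\P_H$ is reduced exactly when its underlying tree diagram is. A dipole over $\P_H$ is a positive cell $a = bc$ glued to a negative cell $bc = a$ with matching labels, which is the same data as a removable pair of carets (a dipole) in the tree; the labels match automatically because the core construction forces identified vertices to have identified children, so the unique labeling assigns both carets the same triple $(a,b,c)$. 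Combining the uniqueness of the labeling with the uniqueness of reduced annular diagrams and of reduced tree diagrams \cite{cfp96}, $\Phi$ is a bijection on reduced diagrams, hence injective. Therefore $\Phi$ is an isomorphism onto $H$, and $H$ is an annular diagram group.

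The hard part will be the bookkeeping around reduction: one must verify carefully that removing a dipole over $\P_H$ preserves (and is matched by) the unique labeling, and conversely that every removable caret-pair in the tree genuinely forms a $\P_H$-dipole. This is exactly where the defining property of the core is used, namely that identifying two vertices forces the identification of their children and vice versa, which guarantees the forced $X$-labeling is globally consistent and stable under reduction. Once that compatibility is secured, the remaining points — that $\Phi$ is a homomorphism and that its image is $H$ — are formal.
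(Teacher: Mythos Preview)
Your proposal is correct and follows essentially the same route as the paper: read the semigroup presentation off the vertices and carets of the core, and show the label-forgetting map to $T$ is an isomorphism onto $H$ by matching reduced annular diagrams with reduced tree diagrams. The paper's argument is terser but uses the very same two facts you isolate---each vertex of the core has at most one pair of children (so the labeling is forced) and each pair of children has at most one parent (so no spurious dipoles appear under label-forgetting).
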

\begin{proof}
The core of $H$ consists of carets with labels, which can be used directly to form the presentation for the annular diagram group.  Let $e$ be the name of the vertex in the core which was identified with all the roots of the generators of $H$.  Suppose a caret in the core has label $x$ on the root, $y$ on the left child, and $z$ on the right child.  Then the corresponding rewriting is $x = yz$.  Let $S$ be the set of all distinct vertices in the core, and $R$ be the set of all rewriting rules for every caret in the core.  Then we claim that the collection of all annular $e-e$ diagrams over the presentation $\gen{S | R}$ is an annular diagram group $A$ isomorphic to $H$.  

Indeed, let $\phi: A \to H$ be a homomorphism described as follows.  For $\Delta \in A$ a reduced annular diagram, every cell in $\Delta$ has exactly one top edge and two bottom edges, and thus each cell can be identified with a caret as in Figure \ref{adiagramc}, which describes a similar situation for going between annular diagrams and tree diagrams for elements of $T$.  Since each vertex in the core of $H$ has at most two children, there are no two distinct rewriting rules $x = yz$ and $x = uv$.  Moreover, since $\Delta$ is reduced, then since each unique pair of left and right children in the core of $H$ have at most one parent, there are no dipoles created in this identification.  As a result, replacing the cells of $\Delta$ with carets as described results in a tree diagram, in exactly the same way as annular diagrams in $\mathcal{D}^a(\gen{x | x = xx})$ correspond to tree diagrams.  Call this tree diagram $(R,S,n)$, and define $\phi(\Delta) = (R,S,n)$.  Since every reduced representation of an element of $H$ has a unique identification with the core of $H$ consisting of carets labeled by the rewriting rules of $A$, it is clear that $\phi$ is surjective.  Likewise, since no dipoles are created, no non-trivial diagram is sent to the identity, hence $\phi$ is injective. Finally, $\phi$ is a homomorphism since reduction of diagrams in $A$ corresponds to removing dipoles in $H$, thus multiplication and reduction of two elements in $A$ gives the same element of $H$.
\end{proof}

\begin{prop}
$C(\vec{T}) = \vec{T}$, and in particular $\vec{T} = \mathcal{D}^a(\gen{e,f | e = ff, f = fe},e)$.\end{prop}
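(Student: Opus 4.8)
The plan is to establish the two assertions in turn: first that $C(\vec{T}) = \vec{T}$, and then to read off the annular presentation from the preceding proposition. The inclusion $\vec{T} \subseteq C(\vec{T})$ is immediate, since the core of a subgroup always accepts that subgroup, so the real content is the reverse inclusion $C(\vec{T}) \subseteq \vec{T}$ together with an explicit computation of the core. Given $C(\vec{T}) = \vec{T}$, the preceding proposition (applied to $H = \vec{T}$) shows that $\vec{T}$ is exactly the annular diagram group on the presentation read off from its core, so the entire second assertion reduces to identifying that core as $\gen{e,f \mid e = ff,\ f = fe}$ with distinguished vertex $e$.

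For the core computation, I would run the folding algorithm of the definition on the reduced diagrams of the generators $x_0x_1,\ x_1x_2,\ x_2x_3,\ f_{\frac{1}{2}}$ from Theorem \ref{vecTgenerators}, exactly as in Figure \ref{excore}. Folding the single diagram of $f_{\frac{1}{2}}$ already cross-identifies its two leaves, collapsing both children of the root to a common vertex $f$ and producing the two edges labeled $0$ and $1$ from $e$ to $f$; processing the $\vec{F}$-generators then forces an edge labeled $0$ from $f$ to $f$ and an edge labeled $1$ from $f$ to $e$, and creates no further vertices. This yields the claimed two-vertex core. I expect this to be a finite but delicate piece of bookkeeping, and it is one of the two main technical points.

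For $C(\vec{T}) \subseteq \vec{T}$, the organizing observation is that on the interval belonging to a single leaf an element of $T$ acts by replacing the input address $u$ of the leaf with its output address $w$, so the shift in dyadic parity along that interval is the \emph{constant} $\delta(l)$ equal to the dyadic parity of $w$ minus that of $u$. Hence, by Theorem \ref{dyadicparity}, $f \in \vec{T}$ precisely when $\delta$ takes the same value on every leaf. Since the core above is a deterministic, complete automaton, any graph homomorphism from the reduced diagram of $f$ to the core is forced vertex by vertex along addresses and automatically respects edges, so acceptance of $f$ is equivalent to the single family of constraints that for each leaf the input and output addresses land on the \emph{same} core vertex (this is the finitary shadow of Lemma \ref{golanlem2}). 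I would then make the automaton explicit: the address $q01^a$ reaches $e$ iff $a$ is odd, with the exceptional all-ones address $1^d$ reaching $e$ iff $d$ is even. Comparing two cyclically adjacent leaves $l_i, l_{i+1}$, whose input addresses necessarily have the forms $q01^a$ and $q10^b$ and whose output addresses are correspondingly consecutive because elements of $T$ preserve cyclic order, a short parity count gives $\delta(l_{i+1}) - \delta(l_i) \equiv a + a' \pmod 2$, where $a,a'$ are the lengths of the trailing runs of $1$'s in the input and output addresses of $l_i$; acceptance at $l_i$ says exactly that these runs have equal parity, so $\delta(l_{i+1}) = \delta(l_i)$. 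Chaining this across the $n-1$ interior adjacencies makes $\delta$ constant, whence $f \in \vec{T}$.

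The main obstacle is precisely this parity bookkeeping, and in particular the seam cases in which an output address is the all-ones address $1^D$: there the decomposition $q'01^{a'}$ fails and the exceptional automaton value must be substituted, after which one checks that the same cancellation occurs and $\delta(l_{i+1}) = \delta(l_i)$ still holds. Because these seam cases fall among the $n-1$ interior adjacencies and the leaf cycle stays connected through them, the input-seam adjacency $l_n$--$l_1$ is never needed, and $\delta$ is forced to be globally constant. Establishing $C(\vec{T}) = \vec{T}$ in this way, and combining it with the computed core and the preceding proposition, then yields $\vec{T} = \mathcal{D}^a(\gen{e,f \mid e = ff,\ f = fe}, e)$.
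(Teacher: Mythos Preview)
Your argument is correct, but it takes a genuinely different route from the paper's proof. The paper does not analyze the core as an automaton at all: it invokes Lemma~\ref{golanlem2} to say that any $f\in C(\vec{T})$ is piecewise-dyadic $\vec{T}$, writes $f$ as a finite concatenation of restrictions of elements $f_i\in\vec{T}$ on dyadic subintervals, and then observes that continuity at each breakpoint $\alpha_i$ forces $f_i$ and $f_{i+1}$ to agree on whether they preserve or switch parity; hence $f$ globally preserves or switches parity and Theorem~\ref{dyadicparity} places $f$ in $\vec{T}$. The explicit core (and hence the annular presentation) is then quoted as ``a simple computation''.

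By contrast, you first compute the two-vertex core and then exploit that it is deterministic and complete, reducing acceptance to the leafwise coincidence condition, and you propagate the parity shift $\delta$ along adjacent leaves by a direct trailing-run computation. This buys you independence from the piecewise-dyadic machinery of Lemmas~\ref{golanlem1}--\ref{golanlem2}: your argument is entirely combinatorial on the single diagram of $f$. The cost is the explicit case analysis (the generic step $\delta(l_{i+1})-\delta(l_i)\equiv a+a'\pmod 2$ together with the all-ones seam), whereas the paper's continuity argument handles all boundaries uniformly in one line once the piecewise-$\vec{T}$ structure is in hand. Both approaches must compute the core to extract the presentation $\mathcal{D}^a(\gen{e,f\mid e=ff,\ f=fe},e)$; you make this computation explicit, while the paper leaves it implicit. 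Your parity and automaton claims check out, including the exceptional value $s(1^d)=e\iff d$ even, and your decision to chain only the $n-1$ interior adjacencies so that the input seam $l_n$--$l_1$ is never needed is sound.
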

\begin{proof}
Let $f \in C(\vec{T})$.  Then $f$ is piecewise-dyadic $\vec{T}$, and can be written as 
\[f(t) = \begin{cases} f_1(t) & t \in [0,\alpha_1] \\
    f_2(t) & t \in (\alpha_1,\alpha_2] \\
    \phantom{f_n} \vdots & \\
    f_n(t) & t \in (\alpha_{n-1},1] \\
    \end{cases}\]

Now, each $f_i(t)$ either preserves or switches dyadic parity.  Since $f$ is continuous, $f_i(\alpha_i) = f_{i+1}(\alpha_i)$, where $f_{n+1} := f_1$.  Thus if $f_i$ preserves the dyadic parity of $\alpha_i$, so does $f_{i+1}$.  By induction, all $f_i$ do the same, i.e., all preserve dyadic parity or all switch it.  Thus $f$ does the same, and hence $f \in \vec{T}$ by Theorem \ref{dyadicparity}.

The particular presentation for $\vec{T}$ as an annular diagram group is a simple computation of the core of $\vec{T}$.
\end{proof}

Notice that the presentation in the annular diagram group definition for $\vec{T}$ is a simple Tietze transformation away from the presentation in the annular diagram group definition of $T_3 = \mathcal{D}^a(\gen{f | f = f^3},f)$.  However, although $\vec{F}$ is isomorphic to $F_3$, this is not the case for $\vec{T}$.

\begin{prop}
$T_3$ does not contain any element of order two, hence it is not isomorphic to $\vec{T}$.
\end{prop}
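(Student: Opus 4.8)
The plan is to prove the stronger statement that every nontrivial finite-order element of $T_3$ has \emph{odd} order. Since $f_\frac12 \in \vec{T}$ satisfies $f_\frac12^2 = 1$, i.e. $\vec{T}$ has an element of order two, this at once yields that $\vec{T}$ and $T_3$ cannot be isomorphic.

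The heart of the argument is a structural lemma: any $g \in T_3$ of finite order $d$ can be written as $(R,R,k)$ for a single full ternary tree $R$, so that $g$ preserves a standard triadic partition of the circle and cyclically permutes its intervals. To build the invariant partition, I would first choose a standard triadic partition $\mathcal{A}$ (the leaves of some full ternary tree) fine enough that each of $g, g^2, \dots, g^{d-1}$ is affine on every interval of $\mathcal{A}$; this is possible because all breakpoints involved are triadic rationals, so a sufficiently fine uniform partition into intervals of length $3^{-N}$ works. I then set $\mathcal{P} = \bigvee_{i=0}^{d-1} g^i(\mathcal{A})$. Each $g^i(\mathcal{A})$ is again a partition into triadic intervals, since on each piece of $\mathcal{A}$ the map $g^i$ is affine with slope a power of $3$ and triadic translation, and such a map sends a triadic interval to a triadic interval; because any tiling of the circle by triadic intervals is the leaf set of a full ternary tree, $\mathcal{P}$ is standard triadic. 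Using $g^d = \mathrm{id}$ one checks that $g(\mathcal{P}) = \bigvee_{i=1}^{d} g^i(\mathcal{A}) = \mathcal{P}$, so $\mathcal{P}$ is $g$-invariant. As $g$ is orientation-preserving it must cyclically shift the arcs of $\mathcal{P}$, and since $\mathcal{P}$ refines $\mathcal{A}$ the map $g$ is affine on each arc; hence $g = (R,R,k)$ where $R$ is the full ternary tree whose leaves give $\mathcal{P}$.

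With this in hand I would read off the order. If $R$ has $m$ leaves, then $g$ permutes them as the shift by $k$, and telescoping the slope ratios $\lvert J_{i+k}\rvert/\lvert J_i\rvert$ around each cycle shows that $g^{m/\gcd(m,k)} = \mathrm{id}$, so the order of $g$ divides $m$. Finally, a full ternary tree always has an odd number of leaves: the trivial tree has one leaf, and replacing a leaf by an internal node with three children changes the leaf count by $+2$. Thus $m$ is odd, the order of $g$ divides $m$, and so the order of $g$ is odd. Therefore $T_3$ contains no element of order two, while $f_\frac12 \in \vec{T}$ does, giving $\vec{T} \not\cong T_3$.

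The main obstacle is the structural lemma, and specifically the two points inside it that require care: that the common refinement $\mathcal{P}$ is genuinely a standard triadic partition (that is, the leaf set of a full ternary tree, using that a tiling of the circle by triadic intervals corresponds exactly to a complete prefix code over $\{0,1,2\}$), and that $g$ stays affine on the refined arcs, so that the triple $(R,R,k)$ really represents $g$ as an element of $T_3$. Once these are secured, the parity count is immediate and the conclusion follows.
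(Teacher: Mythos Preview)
Your argument is correct and complete: the invariant-partition lemma goes through as you sketch (the common refinement of finitely many standard triadic partitions is standard triadic because any two standard triadic intervals are either nested or disjoint, and $g$-invariance follows from $g^d=1$), and once $g=(R,R,k)$ the telescoping shows the order divides the odd leaf count $m$.

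The paper takes a shorter, more ad hoc route specific to order two. It does \emph{not} build an invariant partition; instead it works with an arbitrary pair-of-trees representation $(R,S,k)$ and exploits the single fixed-pair $\{0,\alpha\}$ with $f(0)=\alpha$, $f(\alpha)=0$. Splitting the leaves of each tree into those covering $[0,\alpha]$ and those covering $[\alpha,1]$, it observes that the count of leaves on the $[0,\alpha]$ side has the same parity in $R$ and in $S$ (both trees refine the minimal ternary tree with a breakpoint at $\alpha$, and refinement changes leaf counts by $+2$), while the bijection of leaves forces $|R_{-,\alpha}| = |S_{+,\alpha}|$. This makes $|S_{-,\alpha}|+|S_{+,\alpha}|$ even, contradicting that a full ternary tree has an odd number of leaves. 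So the paper's proof is a two-line parity trick tailored to an involution, whereas yours invests in a structural lemma but in return proves the stronger fact that \emph{every} torsion element of $T_3$ has odd order, and makes transparent why (the order divides a leaf count that is always odd). Both ultimately rest on the same parity observation about ternary trees.
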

\begin{proof}
First, note that elements of $T_3$ have pair of trees representations exactly like elements of $T$, except that the trees are ternary rather than binary, just as the functions have slopes integer powers of $3$ and breakpoints at 3-adic rationals.  

Now suppose that $f \in T_3$ has order $2$.  Let $f(0) = \alpha$.  Then since $f^2 = 1$, $f^2(0) = f(\alpha) = 0$. Since $f$ is continuous then, $f([0,\alpha]) = [\alpha,1]$ and $f([\alpha,1]) = [0,\alpha]$.

Now, let $(R,S,k)$ be a pair of trees representation of $f$ with each tree containing $n$ leaves.  Then since $f(0) = \alpha$, $f$ sends the first leaf of $R$ to a leaf of $S$ corresponding to some interval that begins with $\alpha$.  Likewise, since $f(\alpha) = 0$, $f$ sends some vertex in $R$ whose interval begins with $\alpha$ to the first leaf of $S$.  In particular, both $R$ and $S$ contain leaves whose intervals begin with $\alpha$.  Let $R_{-,\alpha}$ and $S_{-,\alpha}$ be the leaves of $R$ and $S$ respectively whose intervals combine to give $[0,\alpha]$.  Similarly, let $R_{+,\alpha}$ and $S_{+,\alpha}$ be the remaining leaves in each tree.

There is a smallest full ternary tree that contains such a leaf whose interval begins with $\alpha$, which is a subtree of $R$ and $S$.  Since adding vertices to the tree consists of giving one of the leaves three children, there is a net gain of $2$ leaves, so the parity of the number of leaves of the tree doesn't change.  Moreover, the intervals associated with the three children of a vertex partition the interval associated with that vertex, hence the parity of the number of leaves whose intervals partition $[0,\alpha]$ does not change.  In other words, $|R_{-,\alpha}|$ and $|S_{-,\alpha}|$ have the same parity. But since the leaves in $R_{-,\alpha}$ are identified with the leaves in $S_{+,\alpha}$, $|R_{-,\alpha}|$ and $|S_{+,\alpha}|$ have the same parity.  Thus $|S_{-,\alpha}|+|S_{-,\alpha}|$ is even.  But by a similar argument, every full ternary tree has an odd number of leaves, since the smallest full ternary tree has $1$ leaf, the root, and adding three children to a leaf changes the number of leaves by $2$.  Thus $|S_{-,\alpha}|+|S_{-,\alpha}|$ is odd, a contradiction.
\end{proof}

\end{document}